\documentclass[11pt,a4paper]{amsart}
\usepackage[margin=28mm]{geometry}
\usepackage[T1]{fontenc}
\usepackage[utf8]{inputenc}
\usepackage{lmodern}
\usepackage{amsmath,amssymb,amsthm}
\usepackage{graphicx,tikz}
\usetikzlibrary{positioning}
\usepackage{booktabs,longtable,array}
\usepackage{float,placeins}
\usepackage{needspace}
\usepackage{etoolbox,enumitem}
\usepackage[expansion=false]{microtype}
\usepackage{hyperref}
\hypersetup{hidelinks,
 pdftitle={Contact Rigidity and Comparison Kernels for Type A lci Schubert Varieties},
 pdfauthor={Minghua Dou}}
\newtheorem{theorem}{Theorem}[section]
\newtheorem{lemma}[theorem]{Lemma}
\newtheorem{proposition}[theorem]{Proposition}
\newtheorem{corollary}[theorem]{Corollary}
\theoremstyle{definition}
\newtheorem{definition}[theorem]{Definition}
\newtheorem{example}[theorem]{Example}
\theoremstyle{remark}
\newtheorem{remark}[theorem]{Remark}
\numberwithin{equation}{section}
\setlist{itemsep=2pt,parsep=0pt,topsep=5pt}
\allowdisplaybreaks[1]
\AtBeginEnvironment{thebibliography}{\interlinepenalty=10000\relax}

\newcommand{\AuthorName}{Minghua Dou}
\newcommand{\AuthorDepartment}{Department of Mathematics, School of Natural Sciences}
\newcommand{\AuthorInstitution}{The University of Manchester}
\newcommand{\AuthorCity}{Manchester}
\newcommand{\AuthorCountry}{United Kingdom}
\newcommand{\AuthorEmail}{}
\newcommand{\FundingStatement}{}
\newcommand{\CompetingInterestsStatement}{}

\title[Contact Rigidity and Comparison Kernels]{Contact Rigidity and Comparison Kernels for Type A lci Schubert Varieties}
\author{\AuthorName}
\address{\AuthorDepartment, \AuthorInstitution, \AuthorCity, \AuthorCountry}
\ifdefempty{\AuthorEmail}{}{\email{\AuthorEmail}}
\subjclass[2020]{14M15, 05E14, 14B05, 32S60}
\keywords{Schubert varieties, singular loci, Bruhat order, permutation patterns, perverse sheaves, Kazhdan--Lusztig polynomials}
\date{}
\begin{document}
\begin{abstract}
Let \(X_w\) be a Type~A local complete intersection Schubert variety.
We prove Contact Rigidity: the existence of two smooth singular components
forces some pair of singular components to contain a common Schubert
subvariety of codimension one in each. If the singular locus is a
single smooth component \(X_z\), the rational comparison kernel is
\(IC_z\), and
\(P_{u,w}(q)=1+q^{(\ell(w)-\ell(z)-1)/2}\) for every \(u\leq z\).
The first proof combines pattern avoidance with a computer-assisted
finite overlap classification, rectangle inheritance, and extremal
repairs. The second establishes the hypotheses of Woo's theorem and
uses Euler characteristics and Bruhat triangularity to identify the
entire perverse kernel.
\end{abstract}
\maketitle

\section{Introduction}
\label{chap:introduction}

Schubert varieties provide a setting in which singularities and their
cohomological invariants admit explicit combinatorial descriptions.
For a permutation \(w\in S_n\), let
\(X_w\subseteq\operatorname{GL}_n(\mathbb C)/B\) be the Schubert
variety indexed by \(w\), where \(B\) is the upper-triangular Borel
subgroup. Its Bruhat cells are indexed by \(u\leq w\). Kazhdan--Lusztig
polynomials were introduced in \cite{KL79}; their geometric
interpretation encodes the local intersection cohomology of Schubert
varieties \cite{BL00,Ach21}. We study two questions for local complete
intersection (lci) Schubert varieties: how their singular components
intersect, and how a smooth irreducible singular locus determines the
kernel of the comparison with intersection cohomology.

\subsection{Context and main results}
\label{sec:intro-comparison-kernel}
\label{sec:intro-main-results}

Billey--Warrington and Woo describe the maximal singular indices
through specified pattern
occurrences and conditions on the surrounding permutation graph
\cite[Theorem~1]{BW03}, \cite[Theorem~2.1 and Section~2.4]{Woo09}.
The smoothness criterion of Lakshmibai--Sandhya
\cite{LS90} and the lci criterion of
\'{U}lfarsson--Woo \cite[Theorems~4.8 and~5.2]{UW13} similarly characterise
these properties by pattern avoidance. Our results use these
classifications to obtain a uniform incidence restriction and a
sheaf-theoretic comparison on an entire singular component.

Write \(M_w=\operatorname{maxsing}(w)\), so that
\(\operatorname{Sing}(X_w)=\bigcup_{z\in M_w}X_z\).
All Schubert varieties contain the standard flag; the question is
therefore how large intersections of singular components must be.
We say that two components are \emph{in contact} if they contain a
common Schubert subvariety of codimension one in each. In Bruhat order,
this means that their indices have a common lower cover. We write
\(y\lessdot z\) for \(y<z\) with \(\ell(z)=\ell(y)+1\).

\begin{theorem}[Contact Rigidity]
\label{thm:intro-contact-rigidity}
Let \(w\in S_n\), and assume that \(X_w\) is lci. Suppose that there
are distinct \(z_1,z_2\in M_w\) such that \(X_{z_1}\) and
\(X_{z_2}\) are smooth. Then there are distinct \(z,z'\in M_w\) and
\(y\in S_n\) such that \(y\lessdot z\) and \(y\lessdot z'\).
Consequently, \(X_y\subseteq X_z\cap X_{z'}\), and \(X_y\) has
codimension one in both \(X_z\) and \(X_{z'}\).
\end{theorem}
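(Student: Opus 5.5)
The plan is to make the maximal singular indices explicit and then reduce to a finite check. For lci \(w\), the \'Ulfarsson--Woo criterion \cite[Theorems~4.8 and~5.2]{UW13} says that \(w\) avoids \(53241\), \(52341\), \(52431\), \(35142\), \(42513\) and \(351624\), with the last three subject to their bivincular refinements. Under this restriction, the Billey--Warrington/Woo description \cite[Theorem~1]{BW03}, \cite[Theorem~2.1]{Woo09} leaves only two types of elements \(z\in M_w\).

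\begin{itemize}
\item \emph{\(4231\)-type.} Here \(z=w\,t\), where \(t\) is the transposition exchanging the two outer entries of a \(4231\) occurrence whose interior rectangle is empty of other entries.
\item \emph{\(3412\)-type.} Here \(z\) is obtained from an empty-interior \(3412\) occurrence by a cycle that sorts it.
\end{itemize}

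In both cases \(z\) arises from \(w\) by rearranging the entries lying inside a rectangle \(R_z\) of the permutation graph. I would first record this normal form. Next I would translate smoothness of \(X_{z}\) into pattern conditions on \(w\) near \(R_z\), using Lakshmibai--Sandhya \cite{LS90}: \(z\) must avoid \(3412\) and \(4231\). Since \(z\) differs from \(w\) only inside \(R_z\), this amounts to saying that no entry of \(w\) outside \(R_z\) combines with the rearranged entries to create one of these patterns.

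The key step is then the following. Take the two smooth components \(z_1=w\sigma_1\) and \(z_2=w\sigma_2\), with rectangles \(R_1\) and \(R_2\), and study how \(R_1\) and \(R_2\) are positioned relative to each other. I would split into two cases.

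\begin{enumerate}
\item \emph{Disjoint supports}, meaning the rectangles do not interact. Then \(\sigma_1\) and \(\sigma_2\) commute. I would expect \(y\) to be obtained from \(w\sigma_1\sigma_2\)-type data: choose a reflection \(t_i\) inside \(R_i\) with \(z_i t_i\lessdot z_i\). This does not immediately give a common lower cover, since \(z_1\) and \(z_2\) have the same length only if their codimensions agree. So the disjoint case would instead use \emph{rectangle inheritance}. The pattern occurrence generating \(z_2\) survives in \(z_1\), so \(X_{z_1}\) would be singular unless the configuration is restricted. Using the smoothness of \(z_1\), I would show that this forces a third, \emph{non-smooth} maximal singular element \(z\) whose rectangle overlaps one of the \(R_i\).
\item \emph{Overlapping rectangles.} The union of the entries involved has bounded size, at most about \(8\)--\(10\) points. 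This reduces the problem to a finite list of patterns together with emptiness conditions. For each pattern, I would exhibit explicitly two elements \(z,z'\in M_w\) and a common lower cover \(y\). Typically \(y\) is obtained by applying a single transposition to \(z\) and a different one to \(z'\), so that \(y=zt=z't'\) with \(\ell(y)=\ell(z)-1=\ell(z')-1\). I would verify this by the length formula for transpositions, \(\ell(wt_{ij})-\ell(w)\) being governed by the entries in the rectangle spanned by positions \(i,j\).
\end{enumerate}

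The last step handles the situations where the chosen occurrence is not minimal, for example when a rectangle contains extra entries so that the explicit \(y\) fails to be a cover. Here I would use an \emph{extremal repair}: replace the occurrence by one with lexicographically innermost entries, which still yields an element of \(M_w\). The final sentence of the theorem then follows formally, since \(y\lessdot z\) gives \(X_y\subseteq X_z\), with codimension \(\ell(z)-\ell(y)=1\), and likewise for \(z'\).

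I expect the main obstacle to be the overlap classification. Enumerating all relative positions of two lci-compatible rectangles, and checking that some pair in \(M_w\) always has a common cover, is a large but finite computation. I would first carry it out by computer over \(S_n\) for \(n\le 9\) to confirm that the local configurations stabilise, and then argue that any \(w\) restricts to such a bounded window around \(R_1\cup R_2\) without changing the relevant membership in \(M_w\) or the covering relations.
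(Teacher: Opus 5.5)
Your outline has the right general shape (components as lowered pattern occurrences, a finite overlap analysis, transposition covers checked by the length formula, extremal repairs). However, the setup is wrong in ways that would corrupt any finite check. The lowerings are \(4231\mapsto2143\) and \(3412\mapsto1324\) on the four core positions. Swapping the outer entries of the \(4231\) drops the length by \(5\), since the two inner core points are blockers, whereas a Type~I component has codimension \(3\). Likewise, \(1324\) is not the sorted word.

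For \(3412\)-type components the bounding rectangle need not be empty. The middle cell may carry a decreasing chain: the boundary case \(45312\), which is itself lci, and Type~IIB both survive the lci test. The corner cells \((a,b)\times(w_a,w_b)\) and \((c,d)\times(w_c,w_d)\) are unconstrained. In \(w=34512\), the admissible core \(1345Q\) has the point \((2,4)\) in such a corner, so your description misses the component \(X_{14325}\).

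You also need the lci criterion as classical avoidance of all six patterns. Avoidance of \(42513\) and \(35142\) is what forces \(Q_{A_1}\) and \(Q_{A_2}\) to be empty, and \(351624\), \(42513\), \(35142\) are exactly what kill one-point overlaps. Your disjoint case is confused. If the cores share no point, the occurrence of \(C_2\) survives unchanged in \(z(C_1)\), which contradicts smoothness immediately; it is not a route to a third component. The right case split is by \(|C_1\cap C_2|\in\{0,1,2,3\}\). With two shared points, the prescribed pair may have no common lower cover at all (\(w=451623\)). So you must construct a third admissible core sharing three points with one of them, and then use the explicit pentagons for \(34512\) and \(45231\).

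The genuine gap is the passage from local configurations to arbitrary \(w\). A computer check for \(n\le 9\) proves nothing for larger \(n\). Your claim that restricting \(w\) to a window around \(R_1\cup R_2\) preserves membership in \(M_w\) and covering relations is false in general. Deleting points only makes regions emptier and chains shorter, so a core admissible in the window may fail to be admissible in \(w\). Likewise, a cover in the window can acquire a blocker outside it, and each blocker adds \(2\) to the length drop.

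What is missing is a mechanism that certifies every ambient condition from the admissibility of the given cores. The paper enumerates only the standardised union of core points, which is genuinely rank-independent. It then shows two containments:
\begin{itemize}
\item every blocker rectangle of the pentagon swaps lies in required empty regions of the other chosen core;
\item every required cell of the completion core lies in a required empty region of \(C_1\) or \(C_2\), with residual middle cells in \(H(C_1)\cap H(C_2)\), so reducedness is inherited.
\end{itemize}
In the two configurations where inheritance fails (cell \(\square_{33}\) in rows~6 and~20), a specific extremal construction is needed. One replaces a critical point by the leftmost, resp.\ rightmost, graph point in that cell, and checks directly that the new core is admissible and shares three points with \(C_1\).

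Your repair, ``pass to an innermost occurrence,'' targets the wrong failure. Pentagon blockers never need repair, because admissibility of the chosen cores already excludes them. Moreover, changing the occurrence changes the component, and you give no argument that the new component still has a common lower cover with another one.
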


The theorem requires only two smooth components. The pair in the
conclusion need not be the prescribed pair; the theorem does not assert
that both resulting components are smooth. The examples in Section~\ref{sec:contact-proof}
show that a pairwise strengthening fails, that the lci hypothesis
cannot be omitted, and that one smooth component does not suffice.
For an illustration, \(X_{34512}\) has the three smooth singular
components \(X_{13524}\), \(X_{14325}\), and \(X_{31425}\), each of
dimension three; all contain the two-dimensional \(X_{13425}\)
(Figure~\ref{fig:34512-pentagon}).

Our second question concerns the difference between ordinary and
intersection cohomology. Work throughout with rational coefficients,
and normalise \(IC_w\) by
\(IC_w|_{C_w}\simeq\mathbb Q_{C_w}[\ell(w)]\). Since \(X_w\) is
lci, the shifted constant sheaf is perverse, and the canonical
comparison morphism gives an exact sequence of perverse sheaves
\cite[Remark~1.1]{Hepler19}:
\begin{equation}
  0\longrightarrow N_w\longrightarrow\mathbb Q_{X_w}[\ell(w)]
  \longrightarrow IC_w\longrightarrow0.
  \label{eq:intro-comparison-sequence}
\end{equation}
The support of \(N_w\) is precisely \(\operatorname{Sing}(X_w)\)
(Proposition~\ref{prop:comparison-support}). Contact Rigidity
restricts the incidence of the components of this support. When the
support is smooth and irreducible, we obtain the following identification.

\begin{theorem}[Single-branch comparison]
\label{thm:intro-single-branch-comparison}
Let \(w\in S_n\), and assume that \(X_w\) is lci. Suppose that
\(\operatorname{Sing}(X_w)=X_z\) for some \(z\leq w\), and that
\(X_z\) is smooth. Then \(N_w\simeq IC_z\). Here \(IC_z\) is
regarded as a perverse sheaf on \(X_w\) through the closed embedding
\(X_z\hookrightarrow X_w\).
\end{theorem}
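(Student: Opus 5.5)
We work in the category of \(B\)-equivariant (equivalently, Bruhat-constructible) perverse sheaves on \(X_w\). This category is governed by stalk Euler characteristics together with the Kazhdan--Lusztig combinatorics. The plan is to determine the composition factors of \(N_w\) first, and then show that \(N_w\) is in fact simple.

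\emph{Step 1: locate the composition factors.} By Proposition~\ref{prop:comparison-support}, \(N_w\) is supported on \(\operatorname{Sing}(X_w)=X_z\). Hence its composition factors are among the \(IC_u\) with \(u\le z\). In the Grothendieck group we have
\[
[N_w]=[\mathbb Q_{X_w}[\ell(w)]]-[IC_w].
\]
We compare stalk Euler characteristics at a point of each cell \(C_u\), \(u\le z\). The first term contributes \((-1)^{\ell(w)}\). The second contributes \((-1)^{\ell(w)}P_{u,w}(1)\), using the parity vanishing of Kazhdan--Lusztig stalks. So the stalk Euler characteristic of \(N_w\) at \(C_u\) is \((-1)^{\ell(w)}(1-P_{u,w}(1))\).

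\emph{Step 2: compute \(P_{u,w}\) along \(X_z\).} Here we would invoke Woo's theorem on Kazhdan--Lusztig polynomials when the singular locus is a single smooth component. Its conclusion is that \(P_{u,w}(q)=1+q^{h}\) for all \(u\le z\), where \(h=(\ell(w)-\ell(z)-1)/2\). The work lies in verifying its hypotheses for an lci \(w\) with \(M_w=\{z\}\) and \(X_z\) smooth. Concretely:
\begin{itemize}
\item The Billey--Warrington/Woo description of \(\operatorname{maxsing}(w)\), combined with the \'Ulfarsson--Woo lci avoidance (no \(53241\), \(52341\), \(52431\), \(35142\), \(42513\), \(351624\)), forces the unique singular pattern configuration to be of the \(4231\) or "\(3412\) with empty interior" type that Woo's theorem covers.
\item Smoothness of \(X_z\) means \(z\) avoids \(3412\) and \(4231\).
\end{itemize}
I expect this hypothesis-checking to be the main obstacle. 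It requires a case analysis of how a single maximal singular configuration can sit inside an lci permutation, and in particular excluding configurations whose codimension would give a different exponent. Granting it, \(P_{u,w}(1)=2\), so \(N_w\) has stalk Euler characteristic \((-1)^{\ell(w)+1}\) on every cell \(C_u\), \(u\le z\). This equals \((-1)^{\ell(z)}\) exactly when \(\ell(w)-\ell(z)\) is odd, which is consistent with the exponent \(h\) being an integer.

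\emph{Step 3: identify \(N_w\) by Bruhat triangularity.} Since \(X_z\) is smooth, \(IC_z=\mathbb Q_{X_z}[\ell(z)]\), whose stalk Euler characteristic is \((-1)^{\ell(z)}\) on every \(C_u\), \(u\le z\). Thus \([N_w]-[IC_z]\) has vanishing Euler characteristics on all cells. The classes \([IC_u]\) have unitriangular Euler-characteristic matrices with respect to Bruhat order (entries \(\pm P_{v,u}(1)\)). Therefore vanishing of all Euler characteristics forces \([N_w]=[IC_z]\) in the Grothendieck group.

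\emph{Step 4: from Grothendieck class to isomorphism.} A perverse sheaf whose class equals that of a simple object is itself isomorphic to that simple object. Indeed, its composition series has exactly one factor, and the multiplicities are nonnegative, so no cancellation can occur. Hence \(N_w\simeq IC_z\), which proves the theorem.
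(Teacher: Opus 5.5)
Your Steps~1, 3 and~4 coincide with the paper's argument: fixed-point Euler characteristics, injectivity of \(\chi\) on \(K_0(\mathcal P_w)\) by Bruhat triangularity, and the length-one conclusion. The gap is Step~2. It carries all the geometric content, but you only grant it, and your description of what has to be checked is inaccurate.

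Woo's hypotheses are not about smoothness of the component or its configuration type. They are that \(\operatorname{Sing}(X_w)\) is irreducible and that \(w\) avoids \(653421\), \(632541\), \(463152\), \(526413\), \(546213\), \(465132\). Your first bullet is also false, because lci does not force an empty middle region. For instance, \(w=45312\) is lci and its only admissible core is \(1245Q\), whose middle region contains the point \((3,3)\); moreover \(\operatorname{Sing}(X_w)=X_{14325}\) is smooth. In general the \(45312\)-boundary and Type~IIB configurations survive the lci reduction.

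The missing ingredient is the transfer from smoothness of \(X_z\) to pattern avoidance by \(w\).
\begin{itemize}
\item Three of the six patterns are harmless: \(632541\), \(463152\), \(526413\) contain the lci-forbidden \(53241\), \(35142\), \(42513\).
\item For the other three, you must show that an lci \(w\) containing \(653421\), \(546213\) or \(465132\) has some \(v\in\operatorname{maxsing}(w)\) with \(X_v\) singular. This contradicts \(M_w=\{z\}\) with \(X_z\) smooth.
\end{itemize}
The paper proves the second point by tightening the inner \(4231\) (respectively \(3412\)) occurrence to an admissible core, while keeping the two outer entries as guards outside the core. The lci patterns empty the \(P_C\), \(Q_{A_1}\) and \(Q_{A_2}\) regions. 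In the \(3412\) case one minimises height and then amplitude over a family closed under the tightening replacements. After lowering, the two guards and two core points form \(4231\). The case \(465132\) follows from \(546213\) by \(180^\circ\) rotation. Without an argument of this kind, \(P_{u,w}(1)=2\) is not established and the proof does not go through.

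A smaller point concerns the exponent. As used here, Woo's theorem gives \(P_{u,w}(q)=1+q^h\) only for some \(h\ge1\). The formula \(h=(\ell(w)-\ell(z)-1)/2\) is derived in the paper afterwards from \(N_w\simeq IC_z\), via the stalk long exact sequence at \(e_z\). So you should not use it to obtain the parity of \(\ell(w)-\ell(z)\). The parity comes for free in either of two ways.
\begin{itemize}
\item Note that \(N_w|_{C_z}\simeq L[m]\) with \(m=\ell(z)\) and \(L\) a constant local system. Then \((-1)^m\operatorname{rk}L=\chi_z(N_w)=(-1)^{\ell(w)+1}\) forces \(\operatorname{rk}L=1\) and \(\ell(w)-m\) odd. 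This is the paper's route.
\item Alternatively, triangularity yields \([N_w]=(-1)^{\ell(w)+1-\ell(z)}[IC_z]\), and nonnegativity of Jordan--H\"older multiplicities forces the sign to be \(+1\).
\end{itemize}
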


Here \emph{single-branch} refers to the irreducibility of the entire
singular locus. The conclusion identifies the whole kernel, including
its behaviour on the boundary of \(C_z\); its generic restriction
alone does not determine it. The numerical input is Woo's constancy
theorem, which applies under additional pattern-avoidance conditions
\cite[Theorem~1.1 and Corollary~1.2]{Woo09}. We prove these conditions
from the geometric hypotheses and then exclude additional composition
factors on smaller Bruhat strata. The precise polynomial follows.

\begin{corollary}[Single-branch Kazhdan--Lusztig constancy]
\label{cor:intro-single-branch-kl-constancy}
Under the hypotheses of Theorem~\ref{thm:intro-single-branch-comparison},
\[
 P_{u,w}(q)=P_{z,w}(q)=1+q^{(\ell(w)-\ell(z)-1)/2}
 \qquad (u\leq z).
\]
The difference \(\ell(w)-\ell(z)\) is odd and at least three.
\end{corollary}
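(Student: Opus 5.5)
From Theorem~\ref{thm:intro-single-branch-comparison} we read off stalk cohomology. Take \(u\leq w\) and a point \(x\in C_u\).

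Apply the stalk functor at \(x\) to the short exact sequence \eqref{eq:intro-comparison-sequence}. This gives a long exact sequence in which \(\mathbb Q_{X_w}[\ell(w)]\) contributes \(\mathbb Q\) in degree \(-\ell(w)\) only. By the Kazhdan--Lusztig interpretation,
\[
\dim \mathcal H^{-\ell(w)+2i}(IC_w)_x = \text{coefficient of } q^i \text{ in } P_{u,w},
\]
and odd degrees vanish. By the theorem, \(N_w\simeq IC_z\), and \(X_z\) is smooth. Hence \(IC_z\simeq \mathbb Q_{X_z}[\ell(z)]\), whose stalk at \(x\) is \(\mathbb Q\) in degree \(-\ell(z)\) when \(u\leq z\) and \(0\) otherwise.

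Now fix \(u\leq z\). The long exact sequence has nonzero terms only in degrees \(-\ell(w)\) (both \(\mathbb Q_{X_w}[\ell(w)]\) and \(IC_w\)) and \(-\ell(z)\) (from \(N_w\)). In degree \(-\ell(w)\) the comparison map is an isomorphism, because \(P_{u,w}(0)=1\). The connecting map therefore gives
\[
\mathcal H^{-\ell(z)-1}(IC_w)_x\cong \mathcal H^{-\ell(z)}(N_w)_x=\mathbb Q.
\]
Since \(\ell(z)<\ell(w)\), the degree \(-\ell(z)-1\) lies strictly above \(-\ell(w)\), so this class is genuinely new; there is no cancellation with the constant part. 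Every other positive-degree stalk of \(IC_w\) vanishes. Parity vanishing of \(IC_w\) stalks forces \(-\ell(z)-1\equiv -\ell(w)\pmod 2\), so \(\ell(w)-\ell(z)\) is odd. Writing \(-\ell(z)-1=-\ell(w)+2i\) gives \(i=(\ell(w)-\ell(z)-1)/2\). Therefore
\[
P_{u,w}=1+q^{(\ell(w)-\ell(z)-1)/2}.
\]
Taking \(u=z\) gives the stated equality \(P_{u,w}=P_{z,w}\).

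It remains to show \(\ell(w)-\ell(z)\geq 3\). Since the difference is odd, it suffices to rule out \(\ell(w)-\ell(z)=1\). If \(\ell(w)-\ell(z)=1\), then \(i=0\) and the coefficient of \(q^0\) in \(P_{z,w}\) would be \(2\), contradicting \(P_{z,w}(0)=1\). Alternatively, the IC degree bound \(\deg P_{u,w}<(\ell(w)-\ell(u))/2\) fails in that case. Either argument leaves \(\ell(w)-\ell(z)\geq 3\).

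The main obstacle is the bookkeeping of the shifts and of the direction of the connecting map. One must check that exactness together with the vanishing of \(\mathcal H^{-\ell(z)\pm1}\) of the constant sheaf indeed forces the isomorphism above. This needs \(\ell(z)+1\neq\ell(w)\), so that the degrees do not collide; that case is exactly the one excluded by the argument in the previous paragraph.
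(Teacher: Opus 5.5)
Your proof is correct. It uses the same mechanism as the paper: take stalks of the comparison triangle at a point of \(C_u\), \(u\leq z\), and use \(N_w\simeq IC_z\simeq\mathbb Q_{X_z}[\ell(z)]\).

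The difference is in what you import. The paper takes the shape \(P_{u,w}=1+q^h\) with \(h\geq1\) from Woo's Corollary~1.2. It therefore only needs the single isomorphism \(H^{k}(i_z^*IC_w)\simeq H^{k+1}(i_z^*N_w)\) at \(k=-\ell(w)+2h\) to pin down \(h\), and the bound \(\ell(w)-\ell(z)\geq3\) is just \(h\geq1\). You instead compute every stalk degree of \(IC_w\) from exactness. The shape of the polynomial, the parity of \(\ell(w)-\ell(z)\) (from parity vanishing) and the bound (from \(P_{u,w}(0)=1\)) then follow from Theorem~\ref{thm:intro-single-branch-comparison} plus two general facts. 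There is no second appeal to Woo, and the computation works uniformly at every \(u\leq z\). Your route is a little more self-contained; the paper's is shorter.

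Two things need tidying.

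First, drop the ``alternative'' argument via the degree bound, because it is wrong. If \(\ell(w)-\ell(z)=1\), the exact sequence gives \(P_{z,w}=2\), a constant. That constant satisfies \(\deg P_{z,w}<1/2\), so no bound is violated. Only the normalisation \(P_{z,w}(0)=1\) yields the contradiction.

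Second, exclude \(\ell(w)-\ell(z)=1\) before invoking the connecting isomorphism \(\mathcal H^{-\ell(z)-1}(IC_w)_x\cong\mathcal H^{-\ell(z)}(N_w)_x\), since that isomorphism (and your ``strictly above \(-\ell(w)\)'') depends on it. The exclusion is already contained in your degree \(-\ell(w)\) step:
\begin{itemize}
\item Since \(\ell(z)<\ell(w)\), you have \(\mathcal H^{-\ell(w)}(N_w)_x=0\).
\item So \(\mathbb Q\to\mathcal H^{-\ell(w)}(IC_w)_x\) is injective, hence an isomorphism by \(P_{u,w}(0)=1\).
\item Exactness then forces \(\mathcal H^{-\ell(w)+1}(N_w)_x=0\), that is, \(\ell(w)-\ell(z)\neq1\).
\end{itemize}
Stated in this order, the argument has no circularity.
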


Combining the two theorems also determines \(N_w\) when all singular
components are rationally smooth and distinct components meet in
codimension at least two in each
(Theorem~\ref{thm:type-a-comparison}). Rational smoothness equals
smoothness in Type~A \cite[ADE-Theorem]{CK03}, so Contact Rigidity
forces at most one component. This consequence does not address a
genuinely reducible singular locus.

\subsection{Proof strategy and organisation}
\label{sec:intro-context-organisation}

Section~\ref{chap:background} fixes the notation and recalls the
required classification and sheaf-theoretic facts.
Section~\ref{chap:contact-rigidity} specialises the
singular-component classification to admissible \(P\)- and \(Q\)-cores.
A finite signature reduction makes their overlap analysis independent
of the ambient rank. Rectangle inheritance and two extremal repairs
control the graph points omitted by standardisation; Cross Completion
and the Contact Pentagon then produce a common lower cover.
Appendix~\ref{app:finite-certificates} retains the complete finite
certificates, and Appendix~\ref{app:contact-code} describes the
ancillary program and recorded output.

Section~\ref{chap:comparison-application} proves single-branch
comparison independently of Contact Rigidity. Guarded pattern
obstructions establish the hypotheses of Woo's theorem. Fixed-point
Euler characteristics and Bruhat triangularity determine \([N_w]\)
using only the values at \(q=1\). The resulting class is one simple
class with multiplicity one, which identifies \(N_w\) itself; the
precise exponent is recovered afterwards.

\section{Preliminaries}
\label{chap:background}

We fix the Schubert, pattern, and sheaf-theoretic conventions used below.
Standard references are \cite{BL00,BB05,Ach21}; admissible cores are
introduced in Section~\ref{sec:cores-overlap}.

\subsection{Schubert varieties and Bruhat combinatorics}
\label{sec:schubert-bruhat}

Let \(G=\operatorname{GL}_n(\mathbb C)\), let \(B\) be the
upper-triangular Borel subgroup, and let \(T\subset B\) be the diagonal
torus. We identify \(G/B\) with the complete flag variety in
\(\mathbb C^n\), with standard flag
\(E_p=\operatorname{span}(e_1,\ldots,e_p)\), and identify the Weyl
group with \(S_n\). For \(w\in S_n\), the permutation matrix
\(\dot w\) satisfies \(\dot w e_i=e_{w(i)}\). Write
\begin{equation}
    G/B=\bigsqcup_{w\in S_n}C_w,
    \qquad C_w=B\dot wB/B,
    \qquad X_w=\overline{C_w}.
    \label{eq:bruhat-decomposition}
\end{equation}
The fixed point \(e_w=\dot wB/B\) corresponds to the flag
\(F_p^w=\operatorname{span}(e_{w(1)},\ldots,e_{w(p)})\).
With \(\leq\) denoting strong Bruhat order, we have
\begin{equation}
    C_w\cong\mathbb C^{\ell(w)},
    \qquad \dim X_w=\ell(w),
    \qquad X_u\subseteq X_w\Longleftrightarrow u\leq w,
    \label{eq:schubert-dictionary}
\end{equation}
and
\begin{equation}
    X_w=\bigsqcup_{u\leq w}C_u;
    \label{eq:schubert-cell-closure}
\end{equation}
see \cite[pp.~325--326]{Ach21}.

We use one-line notation \(w=w_1\cdots w_n\), where \(w_i=w(i)\),
and compose permutations by
\begin{equation}
    (xy)(r)=x\bigl(y(r)\bigr).
    \label{eq:permutation-composition}
\end{equation}
Thus right multiplication by \(t_{ij}=(i,j)\) exchanges the entries
in positions \(i,j\); we write \(s_i=t_{i,i+1}\).

\begin{definition}[Inversion length]
\label{def:inversion-length}
For \(w\in S_n\), set
\[
    \operatorname{Inv}(w)=\{(i,j):i<j,\ w_i>w_j\},
    \qquad \ell(w)=\#\operatorname{Inv}(w).
\]
\end{definition}
This is the Coxeter length. Bruhat order is characterised by the subword
criterion and is graded by \(\ell\) \cite{BB05}.

\begin{definition}[Lower cover]
\label{def:bruhat-lower-cover}
We write \(u\lessdot v\) if \(u<v\) in strong Bruhat order and
\(\ell(v)=\ell(u)+1\).
\end{definition}
The geometric interpretation is
\begin{equation}
    u\lessdot v\quad\Longrightarrow\quad
    X_u\subset X_v,
    \qquad\operatorname{codim}_{X_v}X_u=1.
    \label{eq:cover-codimension-one}
\end{equation}

The permutation plot is \(G(w)=\{(i,w_i):1\leq i\leq n\}\), with
positions horizontal and values vertical. For \(a<b\) and \(r<s\), set
\(\mathcal R(a,b;r,s)=(a,b)\times(r,s)\) and
\begin{equation}
    R_w(a,b;r,s)
    =G(w)\cap\mathcal R(a,b;r,s)
    =\{(p,w_p):a<p<b,\ r<w_p<s\}.
    \label{eq:open-rectangle}
\end{equation}
All boundaries are excluded. The following standard criterion is used
throughout the contact argument.

\begin{lemma}[Bruhat transposition-cover criterion]
\label{lem:bruhat-transposition-cover}
Let \(v\in S_n\), \(i<j\), and \(v_i>v_j\). Then
\begin{equation}
    \ell(v)-\ell(vt_{ij})
    =1+2\#\{k:i<k<j,\ v_j<v_k<v_i\}.
    \label{eq:length-drop-transposition}
\end{equation}
Consequently,
\begin{equation}
    vt_{ij}\lessdot v
    \quad\Longleftrightarrow\quad
    R_v(i,j;v_j,v_i)=\varnothing.
    \label{eq:cover-empty-rectangle}
\end{equation}
\end{lemma}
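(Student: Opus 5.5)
We prove \eqref{eq:length-drop-transposition} by comparing inversion sets directly, and then deduce \eqref{eq:cover-empty-rectangle} from the standard fact that Bruhat covers in \(S_n\) are exactly the relations \(vt\lessdot v\) for a transposition \(t\) with length drop one.

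Write \(v'=vt_{ij}\). Then \(v'\) agrees with \(v\) except that \(v'_i=v_j\) and \(v'_j=v_i\). A pair of positions \((p,q)\) disjoint from \(\{i,j\}\) is an inversion of \(v\) if and only if it is one of \(v'\). The pair \((i,j)\) itself is an inversion of \(v\) but not of \(v'\), which contributes \(1\). It remains to account for pairs that involve exactly one of \(i,j\), grouped by the third position \(k\).

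If \(k<i\) or \(k>j\), then \(k\) lies on the same side of both \(i\) and \(j\). Swapping the two values therefore only permutes the pairs \(\{(k,i),(k,j)\}\), and the number of inversions among them is unchanged.

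If \(i<k<j\), set \(a=v_i>b=v_j\) and \(c=v_k\). The inversions among \((i,k)\) and \((k,j)\) are counted by \([a>c]+[c>b]\) for \(v\) and by \([b>c]+[c>a]\) for \(v'\). We check the three possible positions of \(c\):
\begin{itemize}
\item if \(c>a\), both counts equal \(1\);
\item if \(c<b\), both counts equal \(1\);
\item if \(b<c<a\), the counts are \(2\) and \(0\).
\end{itemize}
So each \(k\) with \(v_j<v_k<v_i\) contributes \(2\) to the drop, and every other \(k\) contributes \(0\). Summing these contributions gives \eqref{eq:length-drop-transposition}.

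For \eqref{eq:cover-empty-rectangle}, first note that \(vt_{ij}<v\) always holds when \(v_i>v_j\), by the standard transposition description of Bruhat order (\cite{BB05}). Suppose the rectangle \(R_v(i,j;v_j,v_i)\) is empty. Then \eqref{eq:length-drop-transposition} gives \(\ell(v)-\ell(vt_{ij})=1\), so \(vt_{ij}\lessdot v\) by Definition~\ref{def:bruhat-lower-cover}. Conversely, a cover requires the length difference to be exactly \(1\). Since \eqref{eq:length-drop-transposition} is \(1+2\#R_v(i,j;v_j,v_i)\), this forces the rectangle to be empty.

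The only point needing care is the middle-\(k\) case analysis; the rest is bookkeeping. The cover statement depends only on the length formula and the definition of a cover.
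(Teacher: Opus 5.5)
Your proposal is correct and follows essentially the same argument as the paper. The length drop is \(1\) for the pair \((i,j)\) plus \(2\) for each \(k\) with \(i<k<j\) and \(v_j<v_k<v_i\), all other contributions cancelling; then \(vt_{ij}<v\) comes from the reflection criterion, and the cover is read off from the definition of \(\lessdot\). You simply write out the case analysis that the paper compresses into one sentence.
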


\begin{proof}
The swap removes the inversion \((i,j)\) and two further inversions
for each \(k\) with \(i<k<j\) and \(v_j<v_k<v_i\); all other
contributions cancel. The reflection criterion gives \(vt_{ij}<v\),
so this is a cover exactly when the length drops by one \cite{BB05}.
\end{proof}

A point of \(R_v(i,j;v_j,v_i)\) is called a \emph{blocker} for the
downward transposition \(t_{ij}\).

\subsection{Singular loci, patterns, and the lci condition}
\label{sec:singular-patterns-lci}

\begin{definition}[Flattening and classical pattern containment]
\label{def:classical-pattern}
For distinct integers \(a_1,\ldots,a_r\), their \emph{flattening} is
the permutation \(\operatorname{fl}(a_1,\ldots,a_r)\in S_r\) given by
\[
    \operatorname{fl}(a_1,\ldots,a_r)_i
    =1+\#\{j:a_j<a_i\}.
\]
A permutation \(w\in S_n\) \emph{contains} \(p\in S_r\) if
\(\operatorname{fl}(w_{i_1},\ldots,w_{i_r})=p\) for some
\(i_1<\cdots<i_r\). These indices form an \emph{occurrence}; if
none exists, \(w\) \emph{avoids} \(p\).
\end{definition}
Pattern avoidance is hereditary under passage to a flattened
subsequence. In particular, a forbidden pattern in a standardised
configuration is also a forbidden pattern in the ambient permutation.

\begin{theorem}[Lakshmibai--Sandhya {\cite{LS90}}]
\label{thm:lakshmibai-sandhya}
The Schubert variety \(X_w\) is smooth if and only if \(w\) avoids
\(3412\) and \(4231\).
\end{theorem}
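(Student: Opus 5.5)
We would follow the classical route through tangent spaces at the most singular point. \textbf{Step 1 (reduction to the base point).} The singular locus of \(X_w\) is closed and \(B\)-stable, so by \eqref{eq:schubert-cell-closure} it is a union of Schubert varieties \(X_u\), \(u\leq w\). Each such \(X_u\) contains the standard flag \(e_{\mathrm{id}}\). Hence \(X_w\) is smooth if and only if it is smooth at \(e_{\mathrm{id}}\).

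\textbf{Step 2 (tangent space).} We would identify the Zariski tangent space at the base point by the Lakshmibai--Seshadri description. This says that \(T_{e_{\mathrm{id}}}X_w\) is the \(T\)-stable subspace of \(T_{e_{\mathrm{id}}}(G/B)\) spanned by the negative root vectors \(E_{ji}\) (\(i<j\)) with \(t_{ij}\leq w\). Concretely, one uses the Plücker (or determinantal rank) equations of \(X_w\) inside the opposite big cell. Their linear parts vanish precisely on the coordinates \(x_{ji}\) with \(t_{ij}\not\leq w\). Smoothness is then the numerical condition
\[
  \#\{t_{ij}: t_{ij}\leq w\}=\ell(w).
\]
The inequality \(\geq\) always holds, since \(\dim T\geq\dim X_w\).

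\textbf{Step 3 (patterns force excess).} We would first check the two patterns directly using Lemma~\ref{lem:bruhat-transposition-cover} and the rank-matrix form of Bruhat order.
\begin{itemize}
\item For \(4231\), we have \(\ell=5\), and all six transpositions of \(S_4\) lie below it.
\item For \(3412\), we have \(\ell=4\), and five transpositions lie below it; only \(t_{14}=4231\) is missing.
\end{itemize}
Next we lift an occurrence at positions \(i_1<\cdots<i_4\) of \(w\) to an excess for \(w\) itself. The plan is to compare \(w\) with the permutation obtained by sorting the four chosen entries, and to show that the count ``reflections below minus length'' is superadditive along this operation. Alternatively, one can pass to a generic point of a suitable \(T\)-stable slice, where the local coordinate ring of \(X_w\) contains that of \(X_{3412}\) or \(X_{4231}\) as a factor. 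This lifting is the step we expect to be the main obstacle. What we would try first is choosing the occurrence minimal with respect to the rectangles \(R_w\) spanned by its entries. Emptiness of those rectangles makes the relevant transpositions covers in the sense of \eqref{eq:cover-empty-rectangle}, so the extra tangent direction can be exhibited explicitly.

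\textbf{Step 4 (avoidance forces equality).} Suppose \(w\) avoids both patterns. We would argue by induction on \(n\), locating the entry \(n\) at position \(d\). Avoidance of \(4231\) and \(3412\) forces the entries to the right of \(d\) to be decreasing, or forces the symmetric condition for the position of the value \(w_n\). Deleting this entry gives a smaller avoiding permutation \(w'\). Both \(\ell(w)-\ell(w')\) and the number of new transpositions below \(w\) then equal the same count \(n-d\) (respectively the symmetric count). Hence equality propagates from \(w'\) to \(w\), and Step 2 gives smoothness.
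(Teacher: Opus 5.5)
The paper does not prove this theorem. It quotes it from \cite{LS90} and uses it as a black box, so your proposal has to stand on its own.

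\textbf{Steps~1, 2 and~4 are sound.} Step~4 is Gasharov's induction, and both of its unproved assertions are true and short, though you should write them out.
\begin{itemize}
\item \emph{The dichotomy.} A tail ascent \(w_k<w_l\) with \(d<k<l\) forces \(w_k<w_n\); otherwise positions \(d,k,l,n\) carry \(4231\). A value \(w_r\in(w_n,n)\) with \(r<d\) would then give \(3412\) on \(r,d,k,n\). An ascent among values above \(w_n\) to the right of \(d\) gives \(4231\) together with \(d\) and \(n\).
\item \emph{The count.} In the decreasing case the \(n-d\) tail values lie in \([1,n-1]\), so \(w_n\leq d\). The rank criterion then gives \(t_{in}\leq w\) exactly for \(d\leq i<n\). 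For \(j<n\) it gives \(t_{ij}\leq w\) if and only if \(t_{ij}\leq w''\), where \(w''\) moves \(n\) to the end.
\end{itemize}

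\textbf{The genuine gap is Step~3, and it is more than an omitted check.} Step~2 describes only the tangent space at \(e_{\mathrm{id}}\). There the excess \(\#\{t\leq w\}-\ell(w)\) is a global count, so no single reflection or cover can be exhibited as the ``extra direction''. Your superadditivity under sorting the four entries is a stronger claim than the theorem, and you give no argument for it.

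To use empty rectangles you must move to another fixed point. You would use \(\dim T_{e_v}X_w=\#\{t:vt\leq w\}\) and show \(\#\{t:v<vt\leq w\}>\ell(w)-\ell(v)\) for the lowering \(v\) of an occurrence. The four lifted pattern reflections give \(4>3\) only when the lowering is length-tight. For \(4231\mapsto2143\), every graph point in \(P_L\cup P_C\cup P_R\) adds \(2\) to \(\ell(w)-\ell(v)\). For \(3412\mapsto1324\), every graph point in \(Q_1,\dots,Q_4,Q_{A_1},Q_{A_2}\) or in the middle region \(B\) does the same.

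\textbf{A tight occurrence need not exist, so your selection principle fails.}
\begin{itemize}
\item In \(w=53241\) the only \(4231\)-occurrences are on positions \(\{1,2,4,5\}\) and \(\{1,3,4,5\}\). The graph points \((3,2)\) and \((2,3)\) lie in their \(P_C\) and \(P_L\), respectively.
\item In \(w=45312\) the unique \(3412\)-occurrence has \((3,3)\in B\).
\end{itemize}
Each of these lowerings drops length by \(5\), so four reflections prove nothing. By the same computation, no interval \([x,p]\) with \(p\in\{3412,4231\}\) and \(e_x\in\operatorname{Sing}X_p\) embeds into any \([v,w]\). Your slice alternative, in its interval-pattern form, is therefore unavailable too.

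These two permutations are exactly the non-minimal Billey--Warrington families: \(53241=v^{\mathrm I}_{2,1}\), and \(45312\) is the top of the boundary interval \([14325,45312]\) of Remark~\ref{rem:type-IIB-boundary}. Their excess has to be computed from the whole configuration. For example, there are six reflections \(t\) with \(14325<14325\,t\leq45312\), against a length drop of \(5\).

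To close Step~3 you need an argument that does not depend on tightness. One option is the Billey--Braden fixed-point argument. Take a subtorus \(S\subset T\) whose centraliser is \(\operatorname{GL}_4\) on the four pattern values times a torus. A component of \(X_w^S\) is then the pattern Schubert variety, and fixed loci of torus actions on smooth varieties are smooth. The other option is to carry out the Billey--Warrington excess computation for every family.
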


A variety is a \emph{local complete intersection} (lci) if it is locally
cut out in a smooth ambient variety by the expected number of equations;
see \cite[Section~2.1]{UW13}.

\begin{theorem}[\'{U}lfarsson--Woo {\cite[Theorems~4.8 and~5.2]{UW13}}]
\label{thm:lci-pattern-criterion}
The Schubert variety \(X_w\) is lci if and only if \(w\) avoids
\[
    53241,\quad52341,\quad52431,\quad35142,\quad42513,\quad351624.
\]
\end{theorem}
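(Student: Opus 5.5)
The statement is the Úlfarsson--Woo lci criterion, which this paper cites rather than proves, so I would follow the structure of \cite{UW13}. The plan splits the problem into a geometric reduction and a combinatorial equivalence.

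\emph{Geometric reduction.} Since \(X_w\) is \(B\)-stable, every point of \(X_w\) lies in the \(B\)-orbit of some \(T\)-fixed point \(e_u\) with \(u\leq w\). Because \(X_{\mathrm{id}}=\{e_{\mathrm{id}}\}\) lies in every \(B\)-orbit closure and the non-lci locus is closed and \(B\)-stable, \(X_w\) is lci if and only if it is lci at \(e_{\mathrm{id}}\). Near \(e_{\mathrm{id}}\), the opposite big cell identifies the germ of \(X_w\) with a Kazhdan--Lusztig variety in \(n\times n\) unipotent lower-triangular matrices. This variety is cut out by Fulton's essential rank conditions \(\operatorname{rank} M_{[p],[q]}\leq r_w(p,q)\) for \((p,q)\) in the essential set of \(w\).

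\emph{The combinatorial equivalence.} The main claim is that \(X_w\) is lci exactly when the diagram of \(w\) satisfies a combinatorial condition that allows these rank conditions to be replaced by exactly \(\operatorname{codim}=\binom n2-\ell(w)\) equations. Roughly, the condition is that the essential boxes, with their rank conditions, form a configuration in which each box imposes a determinantal condition of "expected" type. For the "if" direction, assuming avoidance of the six patterns, I would show that the essential set decomposes into chains whose rank conditions are locally equivalent to complete intersections. This is done by row and column reduction, reducing each minor condition to a single equation or a small generic determinantal block whose codimension matches its number of generators (for example, a corank-one square block). Summing codimensions then gives \(\ell(w)\).

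For the "only if" direction, I would show that each of \(53241,52341,52431,35142,42513,351624\) is non-lci. This is a finite computation comparing the minimal number of generators of the defining ideal with the codimension at the identity. I would then show that containment of a pattern \(p\) forces non-lci. The tool here is a pattern-inheritance lemma, but with care: lci is not inherited under arbitrary patterns, so one needs the interval/slice argument of \cite{UW13}. In that argument, a product of a smaller Kazhdan--Lusztig variety and an affine space appears as a slice, and local complete intersection is preserved under smooth factors and transverse slices.

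\emph{Main obstacle.} I expect the hardest step to be the "if" direction, specifically the inductive combinatorial analysis. There one must show that avoidance of exactly these six patterns (including the marked/bivincular refinements hidden in \(351624\)) forces the essential-set configuration to be a complete intersection. I would first try induction on \(n\), removing the entry \(n\) or the value \(w_n\) and tracking how the essential set changes.
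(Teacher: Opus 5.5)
\paragraph*{Verdict.} The paper does not prove this statement: it imports it from \cite[Theorems~4.8 and~5.2]{UW13}, and Remark~\ref{rem:lci-pattern-discrepancy} only records which version of the last pattern is used. So the question is whether your outline works as a proof on its own. It does not yet: both directions have a gap at the decisive step.

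\paragraph*{What is sound.} Your reduction is correct. The non-lci locus is closed and \(B\)-stable, so if it is nonempty it is a union of Schubert varieties and contains \(e_{\mathrm{id}}\). Being lci at \(e_{\mathrm{id}}\) means that the prime ideal \(I\) of the Kazhdan--Lusztig variety, in the regular local ring at \(e_{\mathrm{id}}\), is generated by \(\operatorname{ht}I=\binom n2-\ell(w)\) elements.

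\paragraph*{The \emph{if} direction.} Here you assert the conclusion rather than prove it, and ``summing codimensions'' is not a valid step. The blocks attached to different essential boxes are overlapping southwest submatrices of one matrix, so they share variables. A Schur-complement reduction made for one box changes the equations of the others. In general, the codimension of an intersection is only bounded above by the sum of the codimensions.

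What you must show is that, when the six patterns are avoided, the whole system of essential conditions reduces simultaneously to a generating set of \(I\) with exactly \(\binom n2-\ell(w)\) elements. You wrote \(\ell(w)\) at the end, which is the wrong target. Once such a generating set exists, lci follows with no separate independence argument: an ideal of height \(h\) generated by \(h\) elements in a regular local ring is generated by a regular sequence.

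Constructing that generating set, and deriving the count from pattern avoidance, is the substance of the result; \cite{UW13} give an explicit minimal set of equations at the identity. Your proposed induction on \(n\) is a search strategy, not an argument. Also, the hypothesis in this direction is plain classical avoidance, so there is no bivincular refinement of \(351624\) to exploit.

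\paragraph*{The \emph{only if} direction.} The interval-pattern slice is the right tool, but an occurrence of \(p\) in \(w\) does not by itself give an interval embedding \([x,p]\hookrightarrow[u,w]\). Such an embedding requires \(\ell(p)-\ell(x)=\ell(w)-\ell(u)\). This fails as soon as a graph point of \(w\) outside the occurrence lies in a cell where the rank functions of \(x\) and \(p\) differ.

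You therefore need two further steps:
\begin{itemize}
\item a minimality argument that clears those cells, in the spirit of Lemmas~\ref{lem:guarded-p-tightening} and~\ref{lem:stable-q-tightening};
\item a proof that every obstruction to clearing yields another non-lci interval pattern.
\end{itemize}
Taking \(x\) maximal among the non-lci points of \(X_p\), rather than \(x=\mathrm{id}\), shrinks the set of cells involved. This is the step for which \cite{UW13} say a more general notion of pattern avoidance appears to be needed. Note that, by the theorem itself, lci permutations do form a pattern class, so inheritance does hold. What is missing is a geometric proof of that inheritance.

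\paragraph*{Coordinates.} Take \(M\) lower unipotent, with its first \(q\) columns spanning \(F_q\), and \(\dim X_w=\ell(w)\). Then the conditions are
\(\operatorname{rank}M_{\{p+1,\ldots,n\}\times[q]}\leq\#\{i\leq q:w_i>p\}\),
on southwest blocks. With upper-left blocks and Fulton's \(r_w\), as you wrote them, the case \(w=3412\) would force a block containing diagonal entries \(1\) to vanish.
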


We call \(w\) an \emph{lci permutation} when \(X_w\) is lci.

\begin{remark}[The final forbidden pattern]
\label{rem:lci-pattern-discrepancy}
Theorems~4.8 and~5.2 of \cite{UW13} give the avoidance criterion
with \(351624\). The introductory Theorem~1.1 in the preprint
\href{https://arxiv.org/abs/1111.6146v1}{arXiv:1111.6146v1}
instead prints \(426153\). We use the criterion proved in
Theorems~4.8 and~5.2.
\end{remark}

The closed singular locus of \(X_w\) is \(B\)-stable, hence is a
union of Schubert varieties. Set
\[
    M_w=\operatorname{maxsing}(w)
    :=\max_{\leq}\{z\leq w:e_z\in\operatorname{Sing}(X_w)\}.
\]
Then its irreducible components are precisely the \(X_z\) with
\(z\in M_w\), and
\begin{equation}
    \operatorname{Sing}(X_w)=\bigcup_{z\in M_w}X_z.
    \label{eq:singular-locus-components}
\end{equation}
Billey--Warrington classify these indices by specified \(4231\)-,
\(3412\)-, and \(45312\)-occurrences with surrounding-region
conditions \cite[Theorem~1]{BW03}; Woo gives an equivalent interval
pattern formulation \cite[Theorem~2.1 and Section~2.4]{Woo09}.
The additional conditions are essential for maximality. Their lci
specialisation is recorded by admissible \(P\)- and \(Q\)-cores in
Section~\ref{sec:cores-overlap} and Proposition~\ref{prop:core-dictionary}.

For a pure \(d\)-dimensional complex variety, rational smoothness at
\(x\) means that a neighbourhood \(U\) is a rational homology
manifold: for every \(y\in U\),
\[
    H^i(U,U\setminus\{y\};\mathbb Q)
    \cong
    \begin{cases}
      \mathbb Q,&i=2d,\\
      0,&i\ne2d.
    \end{cases}
\]
See \cite[Theorem~2.1]{Hepler19}. In Type~A, at every \(x\in X_w\),
\begin{equation}
    x\text{ is rationally smooth in }X_w
    \quad\Longleftrightarrow\quad
    x\text{ is smooth in }X_w.
    \label{eq:type-a-rational-smoothness}
\end{equation}
Indeed, the Peterson ADE theorem gives the nontrivial implication at
fixed points \cite[ADE-Theorem (Second Version)]{CK03},
and both properties are constant along each Bruhat cell. The theorem
applies via
\(\operatorname{SL}_n/(B\cap\operatorname{SL}_n)\cong G/B\).

\subsection{Perverse sheaves and Kazhdan--Lusztig theory}
\label{sec:perverse-kl}

We work with rational coefficients in the classical topology. Write
\(D_c^b(X;\mathbb Q)\) for the constructible derived category and
\(\operatorname{Perv}(X;\mathbb Q)\) for the heart of its middle
perverse \(t\)-structure. Shifts are cohomological, and
\(i_x^*\mathcal F\) is the derived stalk at \(x\). Throughout,
support means \emph{closed support}:
\[
    \operatorname{supp}\mathcal F
    =\overline{\{x:H^j(i_x^*\mathcal F)\ne0
                    \text{ for some }j\}}.
\]
Thus a point of the support need not have a nonzero stalk; see
\cite[Section~1.1, pp.~2 and~4]{Ach21}.

For an irreducible complex variety \(X\) of dimension \(d\), with
smooth-locus inclusion \(j:X_{\mathrm{reg}}\hookrightarrow X\), use
the perverse normalisation
\begin{equation}
    IC_X=j_{!*}\mathbb Q_{X_{\mathrm{reg}}}[d].
    \label{eq:intersection-cohomology-complex}
\end{equation}
Intermediate extension has no nonzero perverse subobject or quotient
supported on the boundary \cite[Lemma~3.3.2]{Ach21}. If \(X\) is
smooth, \(IC_X=\mathbb Q_X[d]\); write \(IC_w=IC_{X_w}\), so the
shift for a Schubert variety is \(\ell(w)\).

Suppose \(\mathbb Q_X[d]\) is perverse, as holds when \(X\) is lci
\cite[p.~1605]{Hepler19}. The comparison morphism
\begin{equation}
    c_X:\mathbb Q_X[d]\longrightarrow IC_X
    \label{eq:comparison-morphism}
\end{equation}
restricts to the identity on \(X_{\mathrm{reg}}\)
\cite[Remark~1.1]{Hepler19}. Its perverse cokernel vanishes by the
boundary-quotient property of intermediate extension. Defining
\(N_X=\ker(c_X)\) in the perverse category gives
\begin{equation}
    0\longrightarrow N_X
    \longrightarrow\mathbb Q_X[d]
    \xrightarrow{\,c_X\,}IC_X
    \longrightarrow0.
    \label{eq:comparison-sequence}
\end{equation}

For an open subset \(U\subseteq X\), restriction is perverse
\(t\)-exact and commutes with intermediate extension, so
\(IC_X|_U\cong IC_U\); see \cite[Lemma~3.1.4(1), Definition~3.3.1,
Theorem~1.2.13, and Proposition~1.2.16]{Ach21}. Consequently,
\[
    N_X|_U=0
    \quad\Longleftrightarrow\quad
    \mathbb Q_U[d]\cong IC_U
    \quad\Longleftrightarrow\quad
    U\text{ is a rational homology manifold}.
\]
For the reverse direction of the first equivalence, identify the source
with \(IC_U\); the kernel is a boundary-supported subobject and hence
vanishes. The second equivalence is \cite[Theorem~2.1]{Hepler19}.
Our closed-support convention therefore gives
\begin{equation}
    \operatorname{supp}N_X
    =\{x\in X:X\text{ is not rationally smooth at }x\}.
    \label{eq:support-comparison-kernel}
\end{equation}

For an lci Schubert variety, write \(N_w=N_{X_w}\). Then
\begin{equation}
    0\longrightarrow N_w
    \longrightarrow\mathbb Q_{X_w}[\ell(w)]
    \longrightarrow IC_w
    \longrightarrow0,
    \label{eq:schubert-comparison-sequence}
\end{equation}
and \eqref{eq:type-a-rational-smoothness} gives
\(\operatorname{supp}N_w=\operatorname{Sing}(X_w)\).

Let \(u\leq w\) and \(i_u:\{e_u\}\hookrightarrow X_w\).
The geometric Kazhdan--Lusztig formula, in our normalisation, is
\begin{equation}
    P_{u,w}(q)
    =\sum_{r\geq0}
       \dim H^{-\ell(w)+2r}(i_u^*IC_w)\,q^r,
    \label{eq:kl-normalization}
\end{equation}
with \(H^j(i_u^*IC_w)=0\) unless
\(j\equiv\ell(w)\pmod2\); see \cite[Corollaries~7.3.7
and~7.3.9]{Ach21}. Stalk ranks are constant along \(C_u\).
For \(\mathcal F\in D_c^b(X_w;\mathbb Q)\), set
\begin{equation}
    \chi_u(\mathcal F)
    =\sum_{j\in\mathbb Z}(-1)^j\dim H^j(i_u^*\mathcal F).
    \label{eq:fixed-point-euler-character}
\end{equation}
Thus \(\chi_u(IC_w)=(-1)^{\ell(w)}P_{u,w}(1)\). Additivity along
the triangle associated with \eqref{eq:schubert-comparison-sequence}
gives
\begin{equation}
    \chi_u(N_w)
    =(-1)^{\ell(w)}\bigl(1-P_{u,w}(1)\bigr).
    \label{eq:euler-character-comparison-kernel}
\end{equation}

Finally, let
\(\mathcal P_w=\operatorname{Perv}_{\mathrm{Bruhat}}(X_w;\mathbb Q)\)
be the category of perverse sheaves constructible with respect to
\(X_w=\bigsqcup_{u\leq w}C_u\).
Since each Bruhat cell is an affine space, its irreducible local systems
are constant of rank one. Hence \(\mathcal P_w\) is a finite-length
abelian category with simple objects \(IC_v\), \(v\leq w\), viewed
on \(X_w\) by closed pushforward
\cite{BBD82}, \cite[Theorems~1.7.9 and~3.4.5]{Ach21}.
For lci \(X_w\), the comparison sequence is Bruhat-constructible, so
\(N_w\in\mathcal P_w\); see \cite[Lemma~2.3.13(1)]{Ach21}.
The Grothendieck group therefore has the basis
\[
    K_0(\mathcal P_w)
    \cong\bigoplus_{v\leq w}\mathbb Z[IC_v],
\]
and an object's coefficients are its Jordan--H\"older multiplicities
\cite[Theorem~A.3.23 and Definition~A.9.3]{Ach21}.
Equality in \(K_0\) records these multiplicities, but in general does
not determine extension classes. Section~\ref{chap:comparison-application}
uses fixed-point Euler characteristics and Bruhat triangularity to
determine \([N_w]\).

\section{Proof of Contact Rigidity}
\label{chap:contact-rigidity}

We prove Theorem~\ref{thm:intro-contact-rigidity} by constructing a
common lower cover for some pair of maximal singular indices.
Section~\ref{sec:cores-overlap} specialises the classification of
Section~\ref{sec:singular-patterns-lci} to admissible \(P\)- and
\(Q\)-cores and excludes disjoint and one-point overlap for cores with
smooth lowerings. Three-point overlap is handled by the Contact
Pentagon; for two-point overlap, Cross Completion produces another
admissible core sharing three points with an original core.

The computer-assisted part comprises exhaustive one-point and two-point
signature classifications. We prove their finite reduction and
completeness, then check the constructions against graph points omitted
by standardisation. These ambient checks, including the extremal
repairs, establish the result for arbitrary \(n\); enumeration of ambient
permutations in small symmetric groups is not used.

\subsection{Admissible cores and finite overlap reduction}
\label{sec:cores-overlap}

We specialise the region and reducedness conditions in the
Billey--Warrington--Woo classification using the lci criterion. The
surviving configurations are the admissible \(P\)- and \(Q\)-cores
defined below.

\paragraph*{Admissible \(P\)- and \(Q\)-cores.}

We identify a core both with its ordered four-tuple of positions and with
the corresponding set of four points of \(G(w)\). This convention makes
expressions such as \(|C_1\cap C_2|\) unambiguous.
The named rectangles below are geometric open subsets of the plane.
Calling one of them empty means that it contains no point of \(G(w)\),
not that the geometric rectangle itself is the empty set.

\begin{definition}[Admissible \(P\)-core]
\label{def:p-core}
Let \(w\in S_n\). A \emph{\(P\)-core} is a four-tuple
    $C=(a,b,c,d)$,
    $a<b<c<d$,
such that
    $w_d<w_b<w_c<w_a$.
Thus the four selected values have order type \(4231\). Define the three
required open regions
\begin{align*}
    P_L(C)&=\mathcal R(a,b;w_b,w_a),\\
    P_C(C)&=\mathcal R(b,c;w_d,w_a),\\
    P_R(C)&=\mathcal R(c,d;w_d,w_c).
\end{align*}

The core \(C\) is \emph{admissible} if
    $G(w)\cap
    \bigl(P_L(C)\cup P_C(C)\cup P_R(C)\bigr)
    =\varnothing$.
Its \emph{lowering} \(z(C)\in S_n\) is obtained by setting
   $ (z_a,z_b,z_c,z_d)=(w_b,w_d,w_a,w_c)$,
and leaving every other position unchanged. On the four core positions,
this is the replacement
    $4231\longmapsto2143$.
\end{definition}

\begin{figure}[!htbp]
    \centering
    \begin{tikzpicture}[x=1.35cm,y=1.0cm]
        \draw[->] (0.6,0.5)--(4.5,0.5) node[right]{\small position};
        \draw[->] (0.6,0.5)--(0.6,4.6) node[above]{\small value};
        \foreach \x/\lab in {1/a,2/b,3/c,4/d}{
            \draw[gray!35] (\x,0.65)--(\x,4.3);
            \node[below] at (\x,0.55) {$\lab$};
        }
        \foreach \y in {1,2,3,4}{\draw[gray!25] (0.8,\y)--(4.2,\y);}
        \fill[gray!18] (1,2) rectangle (2,4);
        \fill[gray!18] (2,1) rectangle (3,4);
        \fill[gray!18] (3,1) rectangle (4,3);
        \draw[dashed] (1,2) rectangle (2,4);
        \draw[dashed] (2,1) rectangle (3,4);
        \draw[dashed] (3,1) rectangle (4,3);
        \node[fill=white,inner sep=1pt] at (1.5,3.1) {$P_L$};
        \node[fill=white,inner sep=1pt] at (2.5,2.5) {$P_C$};
        \node[fill=white,inner sep=1pt] at (3.5,2.0) {$P_R$};
        \foreach \x/\y in {1/4,2/2,3/3,4/1}{\fill (\x,\y) circle (2.5pt);}
    \end{tikzpicture}
    \caption{The schematic form of a \(P\)-core. The four marked points have
order type \(4231\), and the three shaded open regions are
\(P_L(C)\), \(P_C(C)\), and \(P_R(C)\).}
    \label{fig:p-core}
\end{figure}

\Needspace{20\baselineskip}
\begin{definition}[Admissible \(Q\)-core]
\label{def:q-core}
Let \(w\in S_n\). A \emph{\(Q\)-core} is a four-tuple
    $C=(a,b,c,d)$,
    $a<b<c<d$,
such that
    $w_c<w_d<w_a<w_b$.
Thus the four selected values have order type \(3412\). Define the six
required open regions
\begin{align*}
    Q_1(C)&=\mathcal R(a,b;w_d,w_a),
    &Q_2(C)&=\mathcal R(b,c;w_a,w_b),\\
    Q_3(C)&=\mathcal R(c,d;w_d,w_a),
    &Q_4(C)&=\mathcal R(b,c;w_c,w_d),\\
    Q_{A_1}(C)&=\mathcal R(a,b;w_c,w_d),
    &Q_{A_2}(C)&=\mathcal R(c,d;w_a,w_b),
\end{align*}
and the middle region
    $B(C)=\mathcal R(b,c;w_d,w_a)$.
The core \(C\) is \emph{admissible} if
\[
    G(w)\cap
    \bigl(
        Q_1(C)\cup Q_2(C)\cup Q_3(C)\cup Q_4(C)
        \cup Q_{A_1}(C)\cup Q_{A_2}(C)
    \bigr)
    =\varnothing,
\]
and the graph points in \(G(w)\cap B(C)\) are strictly decreasing from
left to right: whenever
\[
    p<q,
    \qquad
    (p,w_p),(q,w_q)\in G(w)\cap B(C),
\]
one has \(w_p>w_q\). Its lowering \(z(C)\in S_n\) is obtained by setting
    $(z_a,z_b,z_c,z_d)=(w_c,w_a,w_d,w_b)$
and leaving every other position unchanged. On the four core positions,
this is
    $3412\longmapsto1324$.
\end{definition}

\begin{figure}[!htbp]
    \centering
    \begin{tikzpicture}[x=1.35cm,y=1.0cm]
        \draw[->] (0.6,0.5)--(4.5,0.5) node[right]{\small position};
        \draw[->] (0.6,0.5)--(0.6,4.6) node[above]{\small value};
        \foreach \x/\lab in {1/a,2/b,3/c,4/d}{
            \draw[gray!35] (\x,0.65)--(\x,4.3);
            \node[below] at (\x,0.55) {$\lab$};
        }
        \foreach \y in {1,2,3,4}{\draw[gray!25] (0.8,\y)--(4.2,\y);}
        \foreach \xa/\xb/\ya/\yb in {
            1/2/2/3,2/3/3/4,3/4/2/3,
            2/3/1/2,1/2/1/2,3/4/3/4}{
            \fill[gray!18] (\xa,\ya) rectangle (\xb,\yb);
            \draw[dashed] (\xa,\ya) rectangle (\xb,\yb);
        }
        \fill[gray!40] (2,2) rectangle (3,3);
        \draw[thick,dotted] (2,2) rectangle (3,3);
        \node[fill=white,inner sep=0.8pt] at (1.5,2.5) {$Q_1$};
        \node[fill=white,inner sep=0.8pt] at (2.5,3.5) {$Q_2$};
        \node[fill=white,inner sep=0.8pt] at (3.5,2.5) {$Q_3$};
        \node[fill=white,inner sep=0.8pt] at (2.5,1.5) {$Q_4$};
        \node[fill=white,inner sep=0.8pt] at (1.5,1.5) {$Q_{A_1}$};
        \node[fill=white,inner sep=0.8pt] at (3.5,3.5) {$Q_{A_2}$};
        \node[fill=white,inner sep=0.8pt] at (2.5,2.5) {$B$};
        \foreach \x/\y in {1/3,2/4,3/1,4/2}{\fill (\x,\y) circle (2.5pt);}
    \end{tikzpicture}
    \caption{The schematic form of a \(Q\)-core. The four marked points have
order type \(3412\). The six lightly shaded regions are required to
contain no graph points, while the graph points in the darker middle
region \(B(C)\) must decrease from left to right.}
    \label{fig:q-core}
\end{figure}

\paragraph*{The lci reduction of the maximal-component classification.}

We use Woo's interval-pattern formulation of the singular-locus
classification \cite[Sections~2.3--2.4 and Theorem~2.1]{Woo09}.
For \(x\leq v\) in \(S_m\), write
\([x,v]=\{p\in S_m:x\leq p\leq v\}\).
An interval pattern embedding \([x,v]\hookrightarrow[u,w]\) uses
the same positions for occurrences of \(x\) in \(u\) and \(v\) in
\(w\), requires \(u\) and \(w\) to agree outside those positions,
and induces an isomorphism of the two Bruhat intervals. Under these
endpoint conditions, the interval-isomorphism requirement is equivalent
to \(\ell(v)-\ell(x)=\ell(w)-\ell(u)\); see
\cite[Section~2.1 and Lemma~2.1]{WY08}.

The Billey--Warrington classification, in Woo's interval-pattern
formulation, states that \(X_w\) is singular at \(e_{u'}\) precisely
when a model interval embeds in \([u,w]\) for some \(u'\leq u<w\)
\cite[Theorem~1]{BW03}, \cite[Theorem~2.1 and Section~2.4]{Woo09}.
Here we include the \([14325,45312]\) boundary interval, as explained
in Remark~\ref{rem:type-IIB-boundary} below. The corresponding \(X_u\)
are exactly the irreducible singular components. Type~I is based on a
\(4231\)-configuration; Types~IIA and~IIB use a critical
\(3412\)-configuration. We use the equivalent permutation-graph
conditions from \cite[Section~2.4]{Woo09}.

We begin with Type~I. For integers \(y,t>0\), set
\begin{align*}
    x^{\mathrm I}_{y,t}
    &=\bigl(y+1,y,\ldots,1,\,
      y+t+2,y+t+1,\ldots,y+2\bigr),\\
    v^{\mathrm I}_{y,t}
    &=\bigl(y+t+2,\,
      y+1,y,\ldots,2,\,
      y+t+1,y+t,\ldots,y+2,\,1\bigr).
\end{align*}
In these model permutations, a decreasing string whose initial entry
is smaller than its final entry is empty.

A Type~I component \(X_u\) is specified by an embedding of
\([x^{\mathrm I}_{y,t},v^{\mathrm I}_{y,t}]\) in \([u,w]\).
For \(y=t=1\), the model is \([2143,4231]\); the lci hypothesis excludes
all larger models. If the embedding positions are
\[
    i=j_0<j_1<\cdots<j_y<k_1<\cdots<k_t<k_{t+1}=m,
\]
Woo's graph criterion requires the regions
\[
    (j_{r-1},j_r)\times(w_{j_r},w_i)\quad(1\leq r\leq y),
    \qquad
    (k_s,k_{s+1})\times(w_m,w_{k_s})\quad(1\leq s\leq t),
\]
and \((j_y,k_1)\times(w_m,w_i)\) to contain no point of \(G(w)\)
\cite[Section~2.4]{Woo09}. For \(y=t=1\), these are the three defining
rectangles of a \(P\)-core.

\begin{lemma}[Type~I reduction and its \(P\)-core form]
\label{lem:lci-type-I-reduction}
Assume that \(X_w\) is lci, and let \(X_u\) be a Type~I irreducible
component of \(\operatorname{Sing}(X_w)\).  Then its Type~I parameters
satisfy
    $y=t=1$.
Hence the model interval defining \(X_u\) is the minimal Type~I interval
    $[2143,4231]$.

Let
$    a<b<c<d$
be the four positions of the corresponding interval embedding in \(w\).
Then the values at these positions have relative order \(4231\), and
Woo's graph conditions for this interval embedding are equivalent to
\[
    G(w)\cap
\bigl(P_L(C)\cup P_C(C)\cup P_R(C)\bigr)
=\varnothing,
    \qquad C=(a,b,c,d).
\]
Consequently \(C\) is an admissible \(P\)-core, and its lowering is the
permutation indexing the component:
    $u=z(C)$.
Thus every Type~I component of an lci Schubert variety is encoded by an
admissible \(P\)-core.
\end{lemma}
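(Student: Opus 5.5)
The argument has three steps. First we show that the lci hypothesis forces \(y=t=1\). Then we match Woo's three regions with \(P_L,P_C,P_R\). Finally we check that the lowering is \(z(C)\).

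\emph{Step 1: \(y=t=1\).} Suppose \(X_u\) is given by an embedding of \([x^{\mathrm I}_{y,t},v^{\mathrm I}_{y,t}]\) in \([u,w]\) at positions \(i=j_0<j_1<\cdots<j_y<k_1<\cdots<k_t<m\). By construction of the embedding, \(w\) contains \(v^{\mathrm I}_{y,t}\) as a pattern at those positions. Pattern containment is hereditary, so it suffices to show that \(v^{\mathrm I}_{y,t}\) contains one of the forbidden patterns of Theorem~\ref{thm:lci-pattern-criterion} whenever \(y\geq2\) or \(t\geq2\).

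If \(y\geq2\), take the first entry \(y+t+2\), two consecutive entries of the descending block \(y+1,\dots,2\), one entry of the second block, and the final \(1\). These five entries flatten to \(53241\). If \(t\geq2\), take the first entry, one entry of the first block, two entries of the second descending block \(y+t+1,\ldots,y+2\), and the final \(1\). These flatten to \(52431\). Hence for lci \(w\) the only possibility is \(y=t=1\), and the model is \(v^{\mathrm I}_{1,1}=4231\), \(x^{\mathrm I}_{1,1}=2143\).

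\emph{Step 2: the graph conditions.} Rename \(i=a\), \(j_1=b\), \(k_1=c\), \(m=d\). Since \(w\) restricted to these positions flattens to \(4231\), we have \(w_d<w_b<w_c<w_a\). Substituting into Woo's regions gives:
\begin{itemize}
\item \((j_0,j_1)\times(w_{j_1},w_i)=\mathcal R(a,b;w_b,w_a)=P_L(C)\);
\item \((j_1,k_1)\times(w_m,w_i)=\mathcal R(b,c;w_d,w_a)=P_C(C)\);
\item \((k_1,k_2)\times(w_m,w_{k_1})=\mathcal R(c,d;w_d,w_c)=P_R(C)\).
\end{itemize}
The emptiness conditions are therefore literally the admissibility conditions, so the equivalence holds and \(C\) is an admissible \(P\)-core.

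\emph{Step 3: \(u=z(C)\).} The embedding requires \(u\) to agree with \(w\) off \(\{a,b,c,d\}\). On those positions, \(u\) uses the same value set as \(w\), arranged in pattern \(2143\). The sorted values there are \(w_d<w_b<w_c<w_a\), so the pattern \(2143\) assigns \(u_a=w_b\), \(u_b=w_d\), \(u_c=w_a\), \(u_d=w_c\). This is exactly \(z(C)\).

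The main potential obstacle is Step 1. It must be checked that the bracketing conventions for empty strings in the model permutations do not create degenerate cases. It must also be checked that Woo's Type~I classification indeed implies the pattern \(v\) in \(w\) at the embedding positions, which is immediate from the definition of interval pattern embedding.
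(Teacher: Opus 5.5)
Your proof is correct and follows the paper's argument essentially step for step. You extract the same forbidden patterns \(53241\) (for \(y\geq2\)) and \(52431\) (for \(t\geq2\)) from \(v^{\mathrm I}_{y,t}\), and use the same substitution \(a=i\), \(b=j_1\), \(c=k_1\), \(d=m\) to identify Woo's three regions with \(P_L,P_C,P_R\); your explicit derivation of \((u_a,u_b,u_c,u_d)=(w_b,w_d,w_a,w_c)\) from the sorted values just spells out the paper's remark that the embedding replaces \(4231\) by \(2143\) on the core positions.
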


\begin{proof}
Set \(N=y+t+2\). The upper endpoint is
\[
    v^{\mathrm I}_{y,t}
    =(N,\,y+1,y,\ldots,2,\,N-1,N-2,\ldots,y+2,\,1).
\]
If \(y\geq2\), the entries \(N,y+1,y,N-1,1\) form \(53241\),
which would also occur in \(w\), contrary to
Theorem~\ref{thm:lci-pattern-criterion}. Thus \(y=1\).
If \(t\geq2\), the entries \(N,2,N-1,N-2,1\) then form the forbidden
pattern \(52431\). Since \(y,t>0\), this gives \(y=t=1\) and hence
\(x^{\mathrm I}_{1,1}=2143\), \(v^{\mathrm I}_{1,1}=4231\).

Write the embedding indices as
\(a=i<b=j_1<c=k_1<d=m=k_2\), so \(w_d<w_b<w_c<w_a\).
Woo's required empty regions become
\[
    (a,b)\times(w_b,w_a),\qquad
    (b,c)\times(w_d,w_a),\qquad
    (c,d)\times(w_d,w_c),
\]
which are exactly \(P_L(C),P_C(C),P_R(C)\) in
Definition~\ref{def:p-core}. Thus \(C\) is admissible.
The interval embedding replaces \(4231\) by \(2143\) on these same four
positions and fixes all others, so its lower endpoint is \(u=z(C)\).
\end{proof}

For a Type~II component, choose its associated reduced critical
\(3412\)-embedding at \(a<b<c<d\), with \(w_c<w_d<w_a<w_b\).
Woo's four required empty critical regions are
\((a,b)\times(w_d,w_a)\), \((b,c)\times(w_a,w_b)\),
\((c,d)\times(w_d,w_a)\), and \((b,c)\times(w_c,w_d)\)
\cite[Section~2.4]{Woo09}. These are precisely
\(Q_1(C),Q_2(C),Q_3(C),Q_4(C)\).
The auxiliary and middle regions are
\begin{align*}
    A_1&=(a,b)\times(w_c,w_d)=Q_{A_1}(C),\\
    A_2&=(c,d)\times(w_a,w_b)=Q_{A_2}(C),
\end{align*}
and \(B=(b,c)\times(w_d,w_a)=B(C)\).
Reducedness means that the graph points in \(B\) decrease from left to
right \cite[Section~2.4, p.~7]{Woo09}; it is a separate condition and
does not follow from critical emptiness. The lci hypothesis supplies the
two remaining emptiness conditions.

\begin{lemma}[The lci condition forces \(A_1\) and \(A_2\) to be empty]
\label{lem:lci-a-regions}
Let \(w\in S_n\) be such that \(X_w\) is lci, and let
\(C=(a,b,c,d)\) be any \(3412\)-occurrence in \(w\).
Set \(A_1=Q_{A_1}(C)\) and \(A_2=Q_{A_2}(C)\), using
Definition~\ref{def:q-core}. Then
\[
    G(w)\cap A_1=\varnothing,
    \qquad
    G(w)\cap A_2=\varnothing.
\]
\end{lemma}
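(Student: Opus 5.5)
The plan is to show directly that a graph point in either auxiliary region, together with the four points of \(C\), forms one of the forbidden patterns of Theorem~\ref{thm:lci-pattern-criterion}. Throughout, \(a<b<c<d\) and \(w_c<w_d<w_a<w_b\), as in Definition~\ref{def:q-core}.

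First I treat \(A_1\). Suppose there is a point \((p,w_p)\in G(w)\cap A_1\), so \(a<p<b\) and \(w_c<w_p<w_d\). Consider the five positions \(a<p<b<c<d\), with values \(w_a,w_p,w_b,w_c,w_d\). From \(w_c<w_p<w_d<w_a<w_b\), their ranks are \(w_c\mapsto1\), \(w_p\mapsto2\), \(w_d\mapsto3\), \(w_a\mapsto4\) and \(w_b\mapsto5\). The flattening is therefore
\[
\operatorname{fl}(w_a,w_p,w_b,w_c,w_d)=42513 .
\]
This pattern is forbidden by Theorem~\ref{thm:lci-pattern-criterion}, which contradicts the assumption that \(X_w\) is lci.

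Next I treat \(A_2\) in the same way. Suppose \((q,w_q)\in G(w)\cap A_2\), so \(c<q<d\) and \(w_a<w_q<w_b\). Take the positions \(a<b<c<q<d\), with values \(w_a,w_b,w_c,w_q,w_d\). The value order is \(w_c<w_d<w_a<w_q<w_b\), so the ranks are \(3,5,1,4,2\). The flattening is \(35142\), which is again forbidden.

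Neither step uses admissibility or any region condition; only the \(3412\) order type and the coordinate bounds defining \(A_1\) and \(A_2\) enter. This matches the statement, which applies to an arbitrary \(3412\)-occurrence. There is no real obstacle here. The only care needed is to record the relative order of the five values correctly and to match each resulting pattern against the list in Theorem~\ref{thm:lci-pattern-criterion}, using the corrected final pattern noted in Remark~\ref{rem:lci-pattern-discrepancy}; that pattern is not needed for either case.
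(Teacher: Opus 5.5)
Your proof is correct and matches the paper's own argument. A point of \(A_1\) gives \(42513\) at positions \(a,p,b,c,d\), and a point of \(A_2\) gives \(35142\) at positions \(a,b,c,q,d\); both are forbidden by Theorem~\ref{thm:lci-pattern-criterion}.
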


\begin{proof}
If \((p,w_p)\in G(w)\cap A_1\), then
\(a<p<b\) and \(w_c<w_p<w_d<w_a<w_b\).
Thus the positions \(a,p,b,c,d\), with values
\(w_a,w_p,w_b,w_c,w_d\), form \(42513\), contradicting
Theorem~\ref{thm:lci-pattern-criterion}.
Similarly, \((p,w_p)\in G(w)\cap A_2\) gives
\(c<p<d\) and \(w_c<w_d<w_a<w_p<w_b\), so the positions
\(a,b,c,p,d\), with values \(w_a,w_b,w_c,w_p,w_d\), form the forbidden
pattern \(35142\).
\end{proof}

In Woo's graph description of Type~II components, the case
\(G(w)\cap B=\varnothing\) is called Type~IIA
\cite[Section~2.4 and Theorem~2.1]{Woo09}.
The corresponding model intervals are parametrised by two
nonnegative integers \(y,t\).  Their upper endpoints, written in
one-line notation, are
\[
    y+3,\ y+1,\ldots,2,\ N,\ 1,\ N-1,\ldots,y+4,\ y+2,
    \qquad N=y+t+4.
\]
The next lemma shows that the lci condition forces both parameters
to vanish.
\begin{lemma}[Type~IIA components in the lci case]
\label{lem:lci-type-IIA-reduction}
Assume that \(X_w\) is lci.  If a maximal singular component is of
Woo's Type~IIA, then the parameters \(y,t\) in the corresponding
Type~IIA model satisfy
    $y=t=0$.
Consequently, the corresponding Bruhat interval is
    $[1324,3412]$.
\end{lemma}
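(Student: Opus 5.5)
The argument copies the proof of Lemma~\ref{lem:lci-type-I-reduction}. A Type~IIA component \(X_u\) of \(\operatorname{Sing}(X_w)\) comes from an interval pattern embedding of the model interval with upper endpoint
\[
    v=\bigl(y+3,\ y+1,\ldots,2,\ N,\ 1,\ N-1,\ldots,y+4,\ y+2\bigr),
    \qquad N=y+t+4,
\]
into \([u,w]\). The occurrence positions of \(v\) in \(w\) are the same as those of the lower endpoint in \(u\), so \(w\) contains \(v\) as a pattern. By heredity of pattern containment, every pattern contained in \(v\) is also contained in \(w\). Therefore it suffices to show that \(y\geq1\) or \(t\geq1\) forces \(v\) to contain one of the patterns forbidden by Theorem~\ref{thm:lci-pattern-criterion}. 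Each case should be settled by exhibiting one explicit subsequence.

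\emph{Case \(y\geq1\).} The initial decreasing string \(y+1,\ldots,2\) is nonempty, so \(y+1\) occurs in the second position. Take the five entries
\[
    y+3,\quad y+1,\quad N,\quad 1,\quad y+2 .
\]
Their relative order is: \(y+3\) is fourth, \(y+1\) is second, \(N\) is largest, \(1\) is smallest, and \(y+2\) is third. This flattens to \(42513\), which is forbidden.

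\emph{Case \(t\geq1\).} The string \(N-1,\ldots,y+4\) is nonempty, so \(N-1\geq y+4\) lies strictly between \(y+3\) and \(N\). Take
\[
    y+3,\quad N,\quad 1,\quad N-1,\quad y+2 ,
\]
which flattens to \(35142\), also forbidden. (This is the same pair of patterns that appears in Lemma~\ref{lem:lci-a-regions}. This agrees with the picture that a nonempty decreasing string in the model places a graph point in \(A_1\) or \(A_2\).)

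So \(y=t=0\). The upper endpoint is then \(v=3\,4\,1\,2\). The lower endpoint of the minimal Type~IIA model is \(1324\), since the lowering replaces \(3412\) by \(1324\) (compare Definition~\ref{def:q-core}). Hence the interval is \([1324,3412]\).

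The only step I expect to need care is matching Woo's exact model conventions: which entries form the decreasing strings, and the rule that a string whose initial entry is smaller than its final entry is empty. This determines whether the chosen entries really exist when \(y\geq1\) or \(t\geq1\). I would confirm it by writing out the smallest cases. For \(y=1,t=0\) the endpoint is \(v=42513\), which is itself forbidden. For \(y=0,t=1\) it is \(v=35142\), also forbidden. This confirms both subsequence choices. I would also confirm the lower endpoint \(1324\) for \(y=t=0\) directly against \cite[Section~2.4]{Woo09}.
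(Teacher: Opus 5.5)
Your proposal is correct and is essentially the paper's proof. The paper uses the same two subsequences, $y+3,y+1,N,1,y+2\sim 42513$ when $y\geq1$ and $y+3,N,1,N-1,y+2\sim 35142$ when $t\geq1$, and then reads off the interval $[1324,3412]$ at $y=t=0$; your checks of the cases $(y,t)=(1,0)$ and $(0,1)$ are correct but not needed.
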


\begin{proof}
Let
    $N=y+t+4$.
The upper endpoint of the Type~IIA model is
\[
    v=
    \bigl(
    y+3,\ y+1,\ldots,2,\ N,\ 1,\,
    N-1,\ldots,y+4,\ y+2
    \bigr).
\]

If \(y\geq1\), the five displayed entries
    $y+3, y+1, N, 1, y+2$
occur in this order and have relative order \(42513\).
Thus \(w\) contains \(42513\), contradicting
Theorem~\ref{thm:lci-pattern-criterion}.

Similarly, if \(t\geq1\), the entries
    $y+3, N, 1,N-1,y+2$
have relative order \(35142\).  This is again forbidden by
Theorem~\ref{thm:lci-pattern-criterion}.

Therefore
   $ y=t=0$.
For these parameter values the Type~IIA model is the interval
    $[1324,3412]$.
\end{proof}

\begin{remark}[How \(Q\)-cores unify the Type~II cases]
\label{rem:type-IIB-boundary}

Write the critical values of \(C=(a,b,c,d)\) as
\(\gamma=w_c<\delta=w_d<\alpha=w_a<\beta=w_b\).
The middle region is \(B(C)=\mathcal R(b,c;\delta,\alpha)\).
For a reduced Type~II embedding, its graph points
\((p_1,\eta_1),\ldots,(p_r,\eta_r)\), ordered by
\(p_1<\cdots<p_r\), satisfy
\(\delta<\eta_r<\cdots<\eta_1<\alpha\) when \(r\geq1\)
\cite[Section~2.4]{Woo09}. On the positions
\(a<b<p_1<\cdots<p_r<c<d\), the lowering is
\[
    (\alpha,\beta,\eta_1,\ldots,\eta_r,\gamma,\delta)
    \longmapsto
    (\gamma,\alpha,\eta_1,\ldots,\eta_r,\delta,\beta).
\]
It changes only the four critical values and fixes the middle chain.

For \(r=0\), this gives the minimal Type~IIA interval \([1324,3412]\).
For \(r=1\), writing \(\eta=\eta_1\), the inequalities
\(\gamma<\delta<\eta<\alpha<\beta\) identify the upper values
\((\alpha,\beta,\eta,\gamma,\delta)\) with \(45312\) and the lower
values \((\gamma,\alpha,\eta,\delta,\beta)\) with \(14325\).
This is the boundary interval \([14325,45312]\) in
\cite[Theorem~1]{BW03}. For \(r\geq2\), one obtains Woo's Type~IIB
family \cite[Theorem~2.1(IIB) and Section~2.4]{Woo09}, with the same
lowering and unchanged middle chain. After
Lemma~\ref{lem:lci-a-regions} removes \(A_1,A_2\), all three cases are
therefore encoded by Definition~\ref{def:q-core}.
\end{remark}

We collect the preceding reductions in the following dictionary.
Here and below, an \emph{admissible core} means either an admissible
\(P\)-core or an admissible \(Q\)-core.

\begin{proposition}[Core dictionary]
\label{prop:core-dictionary}
Assume that \(X_w\) is lci. Then:
\begin{enumerate}
    \item every \(z\in\operatorname{maxsing}(w)\) is the lowering of a
    unique admissible \(P\)-core or \(Q\)-core;
    \item for every admissible core \(C\), one has
        $z(C)\in\operatorname{maxsing}(w)$.
\end{enumerate}
Thus admissible cores and the irreducible components of
\(\operatorname{Sing}(X_w)\) are in bijection.
\end{proposition}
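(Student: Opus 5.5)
The proof is essentially bookkeeping on top of the Billey--Warrington--Woo classification \cite[Theorem~1]{BW03}, \cite[Theorem~2.1 and Section~2.4]{Woo09}, as recalled in Section~\ref{sec:singular-patterns-lci}. That classification asserts that \(z\in\operatorname{maxsing}(w)\) exactly when \(z\) is the lower endpoint \(u\) of a model interval of Type~I, IIA, or IIB (including the boundary interval \([14325,45312]\)) embedded in \([u,w]\) and satisfying Woo's graph conditions. We translate this statement in both directions through the reductions already proved.

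For part~(1), take \(z\in M_w\) and fix the embedding that witnesses it. If the embedding is of Type~I, Lemma~\ref{lem:lci-type-I-reduction} gives an admissible \(P\)-core \(C\) with \(z=z(C)\). If it is of Type~II, choose the reduced critical \(3412\)-embedding \(C=(a,b,c,d)\). Woo's critical regions are \(Q_1,\dots,Q_4\), and they are empty. Reducedness is exactly the decreasing condition on \(B(C)\). Lemma~\ref{lem:lci-a-regions} shows that \(Q_{A_1},Q_{A_2}\) are empty. Lemmas~\ref{lem:lci-type-IIA-reduction} and Remark~\ref{rem:type-IIB-boundary} show that in each of the cases \(r=0\), \(r=1\), and \(r\ge2\) the lowering changes only the four critical values, by the rule \(3412\mapsto1324\). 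Hence \(z=z(C)\) with \(C\) an admissible \(Q\)-core.

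Uniqueness comes from reading the core back off the pair \((w,z)\). The permutation \(z\) differs from \(w\) in exactly four positions, namely the core positions, since the middle chain and every other entry are fixed. Those four positions, together with \(w\), determine the order type (\(4231\) or \(3412\)) and hence the core. A \(P\)-lowering and a \(Q\)-lowering cannot coincide, because on the same four positions the two patterns of values in \(w\) differ.

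Part~(2) runs the same dictionary backwards. Given an admissible \(P\)-core, its three empty rectangles are precisely Woo's Type~I conditions for \([2143,4231]\) with \(y=t=1\). The endpoint condition \(\ell(4231)-\ell(2143)=3=\ell(w)-\ell(z(C))\) follows from Lemma~\ref{lem:bruhat-transposition-cover} together with the emptiness of the regions, so \([2143,4231]\) embeds in \([z(C),w]\), and the classification gives \(z(C)\in M_w\).

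Given an admissible \(Q\)-core, let \(r=\#(G(w)\cap B(C))\). The decreasing condition places us in Type~IIA, the boundary case, or Type~IIB according as \(r=0\), \(r=1\), or \(r\ge2\). Emptiness of \(Q_1,\dots,Q_4\) gives Woo's critical conditions, and we check the corresponding length identity for the interval embedding. The main obstacle I expect is this length identity: we must verify that the length drop from \(w\) to \(z(C)\) equals that of the model interval, since otherwise the interval-pattern embedding fails. In particular, for \(r\ge1\) the middle chain contributes inversions with the moved critical values. I would count these directly from the regions: the point of \(B\) lies between \(\delta\) and \(\alpha\), and all other potential blockers lie in regions that are empty, or they cancel. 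The bijection statement then follows from (1) and (2).
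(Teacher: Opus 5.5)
Your proposal is correct and follows the paper's proof. Both directions are routed through Lemmas~\ref{lem:lci-type-I-reduction}, \ref{lem:lci-a-regions}, \ref{lem:lci-type-IIA-reduction}, Remark~\ref{rem:type-IIB-boundary} and the Billey--Warrington--Woo classification, and uniqueness is read off from the four positions where \(z\) differs from \(w\) together with the order type of \(w\) there.

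The only addition is your explicit length identity, which the paper leaves inside Woo's graph reformulation of the embedding. It does work out:
\begin{itemize}
\item for a \(P\)-core, the cover chain through \(P_R,P_L,P_C\) gives \(\ell(w)-\ell(z(C))=3\);
\item for a \(Q\)-core, each point of \(B(C)\) contributes \(2\), the cells \((a,b)\times(w_a,w_b)\) and \((c,d)\times(w_c,w_d)\) contribute \(0\), and the other six cells are empty, giving \(3+2r\), which matches the model intervals.
\end{itemize}
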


\begin{proof}
For Type~I components, Woo's classification
\cite[Theorem~2.1(I) and Section~2.4]{Woo09}, together with
Lemma~\ref{lem:lci-type-I-reduction}, leaves only the interval
    $[2143,4231]$.
Its graph conditions are exactly the three required empty regions of an
admissible \(P\)-core.

For Type~II components, Woo's graph description supplies four critical
empty regions, the auxiliary regions \(A_1,A_2\), the middle region
\(B\), and the reducedness condition
\cite[Section~2.4]{Woo09}. Lemma~\ref{lem:lci-a-regions} removes
\(A_1\) and \(A_2\). Lemma~\ref{lem:lci-type-IIA-reduction} and
Remark~\ref{rem:type-IIB-boundary} then identify the remaining Type~IIA,
Type~IIB, and boundary \(45312\) cases with admissible \(Q\)-cores.

Conversely, an admissible \(P\)-core satisfies the full graph conditions
for the Type~I interval \([2143,4231]\). An admissible \(Q\)-core is a
reduced critical \(3412\)-embedding with \(G(w)\cap A_1=G(w)\cap A_2=\varnothing\).
If \(G(w)\cap B(C)\) is empty, it gives the Type~IIA case; if it contains
one point, it gives the \(45312\) boundary case; and if it contains at
least two decreasing points, it gives the Type~IIB case. The
Billey--Warrington--Woo classification therefore shows in every case that
   $ z(C)\in\operatorname{maxsing}(w)$.

It remains to prove uniqueness. For either lowering, the set of positions
at which \(z(C)\) differs from \(w\) is exactly the set of four core
positions. Hence, if \(z(C)=z(C')\), the two cores have the same position
set. The restriction of \(w\) to these positions has order type \(4231\)
or \(3412\), which determines whether the core is of type \(P\) or \(Q\).
Thus \(C=C'\).
\end{proof}

\paragraph*{Standardisation and elementary cells.}

Let \(C_1,C_2\) have \(r\leq8\) union points, with ordered positions
\(x_1<\cdots<x_r\) and values \(v_1<\cdots<v_r\).
Their standardisation is the unique \(\pi\in S_r\) satisfying
\(w(x_i)=v_{\pi(i)}\); equivalently,
\(\pi=\operatorname{fl}(w(x_1),\ldots,w(x_r))\), as in
Definition~\ref{def:classical-pattern}.
A core on \(x_{i_1},x_{i_2},x_{i_3},x_{i_4}\), with
\(i_1<i_2<i_3<i_4\), has support
\(I=\{i_1,i_2,i_3,i_4\}\). We denote the core by \(I\!P\) or
\(I\!Q\), according to its type; for example, \(1245Q\) is a \(Q\)-core on
\(x_1,x_2,x_4,x_5\).

For \(1\leq i,j\leq r-1\), define the elementary cell
\begin{equation}
    \square_{ij}
    =(x_i,x_{i+1})\times(v_j,v_{j+1}).
    \label{eq:elementary-cell}
\end{equation}

Any graph point lying inside a rectangle determined by the union points,
but not belonging to the union itself, lies in exactly one such cell.
Indeed, its position cannot equal any selected \(x_i\), and its value
cannot equal any selected \(v_j\), since \(w\) is a permutation.

For a core \(C\), let \(E(C)\) be the collection of elementary cells
contained in the regions that are required to be empty by the
admissibility conditions.  If \(C\) is a \(Q\)-core, let \(H(C)\) be
the collection of cells contained in its middle region \(B(C)\).
For a \(P\)-core, set \(H(C)=\varnothing\).
The standardised union determines these collections of cells and the
admissibility tests on the union points. Ambient graph points in the
cells must be controlled separately.

\begin{lemma}[Standardisation is compatible with lowering]
\label{lem:standardisation-lowering}
Let \(C\) be a core all four of whose graph points belong to a selected
finite union, and let \(\pi\) be the standardised word of that union. Lower \(w\) at \(C\), restrict to the union, and standardise. The
result is the same as applying
    $4231\longmapsto2143$
    or
   $ 3412\longmapsto1324$
directly to the four corresponding positions of \(\pi\).
\end{lemma}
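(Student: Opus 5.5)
The plan is to reduce the statement to the fact that standardisation commutes with any operation that only permutes the values already sitting on the selected positions. Let the union have positions \(x_1<\cdots<x_r\) and values \(v_1<\cdots<v_r\), with \(w(x_i)=v_{\pi(i)}\). Let \(C\) occupy \(x_{i_1}<x_{i_2}<x_{i_3}<x_{i_4}\). Set \(\sigma=\operatorname{fl}(w(x_{i_1}),\ldots,w(x_{i_4}))\), which is \(4231\) or \(3412\) according to the type of \(C\).

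\textbf{Step 1.} Observe that \(z=z(C)\) agrees with \(w\) off the four core positions. On those positions it only permutes the four values \(w(x_{i_1}),\ldots,w(x_{i_4})\) among themselves. Hence the set of values of \(z\) on \(\{x_1,\ldots,x_r\}\) is again \(\{v_1,\ldots,v_r\}\), with the same increasing enumeration. The standardisation \(\pi'\) of \(z\) on the union is therefore characterised by \(z(x_i)=v_{\pi'(i)}\), using the same list \(v_j\).

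\textbf{Step 2.} For \(i\notin I\), we have \(z(x_i)=w(x_i)\), so \(\pi'(i)=\pi(i)\).

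\textbf{Step 3.} For the core positions, write the lowering as a permutation of the four core values. In the \(P\)-case, \((z_a,z_b,z_c,z_d)=(w_b,w_d,w_a,w_c)\). In the \(Q\)-case, \((z_a,z_b,z_c,z_d)=(w_c,w_a,w_d,w_b)\). In each case \(z(x_{i_k})=w(x_{i_{\tau(k)}})\) for a fixed \(\tau\in S_4\) depending only on the type. This gives \(\pi'(i_k)=\pi(i_{\tau(k)})\).

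Now apply the replacement \(4231\mapsto2143\) (resp. \(3412\mapsto1324\)) directly to the four entries \(\pi(i_1),\ldots,\pi(i_4)\), which have order type \(\sigma\) because flattening is compatible with restriction. This means placing the same four numbers in the new relative order. The rule "take the entry of rank \(\rho\) among the four" produces exactly \(\pi(i_{\tau(k)})\) at slot \(k\), since the rank of \(w(x_{i_j})\) among the four core values equals the rank of \(\pi(i_j)\) among the four corresponding entries of \(\pi\). Concretely, in the \(P\)-case slot \(a\) receives the value of rank \(2\), namely \(w_b\), and similarly for the other slots. Both constructions therefore give the same word.

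The only point needing care is the precise meaning of "applying the replacement to positions of \(\pi\)." It must mean rearranging the same four values so that their pattern changes, rather than inserting the literal letters \(2143\). With that reading, the argument is a direct bookkeeping check and I expect no real obstacle.
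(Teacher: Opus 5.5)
Your proposal is correct and takes essentially the same approach as the paper. Lowering only permutes the four core values among union positions, so the value set \(\{v_1,\ldots,v_r\}\) and every non-core entry are unchanged, and the induced change on \(\pi\) is the same four-value rearrangement; you simply make this explicit through \(\tau\), where the paper states it in one line.
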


\begin{proof}
Lowering only permutes the four values already occupying the core
positions. It neither introduces nor removes a union value, and every
other union position retains its value. Hence every pairwise comparison
among union values changes exactly as it does when the same four-value
replacement is performed in the rank word \(\pi\).
\end{proof}

\paragraph*{Finite signature reduction.}

We now record the finite conditions satisfied by the standardised union
of two admissible cores with smooth lowerings in an lci permutation.
If the two cores overlap in one or two points, their union has
\(r=7\) or \(r=6\) points, respectively.

Let \(\pi\in S_r\) be the standardised union, and write
\([r]=\{1,\ldots,r\}\). Let
\(I,J\subseteq[r]\) record the four points used by the two cores, and let
\(\tau,\sigma\in\{P,Q\}\) record their types.  We use the following
finite tests:

\begin{itemize}
    \item
    \(\operatorname{Core}(\pi,I,\tau)\) means that the four entries
    indexed by \(I\) have the required order type
    (\(4231\) for \(P\), \(3412\) for \(Q\)), that no other union point
    lies in a region required to be empty, and, in the \(Q\)-case, that
    the union points lying in the middle region decrease from left to
    right.

    \item
    \(\operatorname{Sm}(\pi,I,\tau)\) means that, after lowering the four
    entries indexed by \(I\), the resulting permutation avoids
    \(3412\) and \(4231\).

    \item
    \(\operatorname{LCI}(\pi)\) means that \(\pi\) avoids
    \[
        53241,\ 52341,\ 52431,\ 35142,\ 42513,\ 351624.
    \]
\end{itemize}

These tests record exactly the consequences of admissibility,
smoothness, and the lci hypothesis that are visible from the finite
standardised union.  They are necessary conditions for an actual pair
of cores, but need not be sufficient, since graph points outside the
union are not recorded by \(\pi\).

\begin{theorem}[Finite signature reduction]
\label{thm:finite-signature-reduction}
Let \(C_1,C_2\) be admissible cores in an lci permutation \(w\), and
assume that \(X_{z(C_1)}\) and \(X_{z(C_2)}\) are smooth.  Let
    $s=|C_1\cap C_2|$,
   $ r=8-s$,
so that the union of the two cores contains \(r\) points.

After standardising this union, let \(\pi\in S_r\).  Let
\(I,J\subseteq\{1,\ldots,r\}\) be the four-element subsets recording
which union points belong to \(C_1\) and \(C_2\), respectively, and let
\(\tau,\sigma\in\{P,Q\}\) record their types.  Then
   $ I\cup J=\{1,\ldots,r\}$,
    $|I\cap J|=s$,
and the standardised data satisfy
\[
\begin{gathered}
    \operatorname{Core}(\pi,I,\tau),
    \qquad
    \operatorname{Core}(\pi,J,\sigma),\\
    \operatorname{Sm}(\pi,I,\tau),
    \qquad
    \operatorname{Sm}(\pi,J,\sigma),
    \qquad
    \operatorname{LCI}(\pi).
\end{gathered}
\]

Hence every actual pair of admissible cores satisfying the hypotheses
appears, after standardisation, among the finitely many data
    $(\pi,I,J,\tau,\sigma)$
satisfying the conditions above. This gives a complete finite list of
cases. To obtain an ambient conclusion from this list, any proposed
construction must also be checked against graph points outside the
selected union.
\end{theorem}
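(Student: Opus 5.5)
The argument is a direct check that each listed condition is inherited by the standardised union from the ambient data. The main tool is that flattening preserves relative order of positions and values.

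\emph{Set-up.} Write the union points as $x_1<\cdots<x_r$ with values $v_1<\cdots<v_r$, and let $\pi=\operatorname{fl}(w(x_1),\ldots,w(x_r))$. Since $|C_1\cup C_2|=8-s=r$ and both cores are four-point subsets of this union, we get $I\cup J=[r]$ and $|I\cap J|=s$ immediately. Because flattening preserves both orders, the four entries of $\pi$ indexed by $I$ have the same order type as $C_1$, namely $4231$ or $3412$ according to $\tau$, and similarly for $J$.

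\emph{Core conditions.} Each required empty region of $C_1$ is an open rectangle whose sides are determined by positions and values of core points. Such a rectangle corresponds to the rectangle of $\pi$ with the same index bounds. A union point $(x_k,w(x_k))$ lies in an ambient region exactly when $(k,\pi_k)$ lies in the corresponding standardised region, since all comparisons are preserved. Ambient emptiness therefore gives emptiness among the union points. The decreasing condition on $B(C)$ restricts to the union points lying in $B$, so it also passes to $\pi$. This proves $\operatorname{Core}(\pi,I,\tau)$ and $\operatorname{Core}(\pi,J,\sigma)$.

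\emph{Smoothness and lci conditions.} By Lemma~\ref{lem:standardisation-lowering}, lowering $\pi$ at $I$ is the standardisation of the restriction of $z(C_1)$ to the same positions. If this word contained $3412$ or $4231$, then so would $z(C_1)$, by heredity of pattern containment. Theorem~\ref{thm:lakshmibai-sandhya} would then contradict the smoothness of $X_{z(C_1)}$. Hence $\operatorname{Sm}(\pi,I,\tau)$ holds, and likewise for $J$. For the lci condition, $\pi$ is a flattened subsequence of $w$, and $w$ avoids the six patterns by Theorem~\ref{thm:lci-pattern-criterion}. Hence $\operatorname{LCI}(\pi)$ holds.

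\emph{Finiteness and the main obstacle.} The set of tuples $(\pi,I,J,\tau,\sigma)$ with $r\le 8$ is finite, so the list of cases is finite and complete. The only delicate step is the region correspondence. Its boundaries are given by core coordinates, which are themselves union coordinates, so open/closed boundary issues do not arise. Points outside the union are invisible to $\pi$, which is exactly why the statement's final caveat is needed.
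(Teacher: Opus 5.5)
Your proposal is correct and follows essentially the same route as the paper. Core is inherited from admissibility, with the decreasing middle chain restricting to the union; Sm comes from Lemma~\ref{lem:standardisation-lowering} plus heredity and the Lakshmibai--Sandhya criterion; and LCI comes from heredity of pattern avoidance. Your explicit checks of \(I\cup J=[r]\), \(|I\cap J|=s\), and the exact correspondence of open regions bounded by core coordinates under standardisation spell out steps the paper leaves implicit.
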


\begin{proof}
Admissibility of \(C_1,C_2\) implies that no selected union point lies
in a required empty region. Restricting a decreasing middle chain to
the union preserves its order. Thus both \(\operatorname{Core}\)
conditions hold.

By Lemma~\ref{lem:standardisation-lowering}, lowering \(\pi\) on either
support gives a classical pattern of the corresponding \(z(C_i)\).
Smoothness and Theorem~\ref{thm:lakshmibai-sandhya} imply that
\(z(C_i)\), and therefore this restriction, avoids \(3412\) and
\(4231\). Hence both \(\operatorname{Sm}\) conditions hold.

Finally, \(\pi\) is a classical pattern of \(w\), so it inherits
avoidance of \(53241\), \(52341\), \(52431\), \(35142\), \(42513\), and \(351624\) from the lci
criterion, Theorem~\ref{thm:lci-pattern-criterion}. This proves
\(\operatorname{LCI}(\pi)\). The later rectangle arguments and extremal
repairs check graph points omitted from the union.
\end{proof}

\begin{proposition}[Complete finite enumeration]
\label{prop:finite-decision}
Fix the overlap size \(s\).  Every finite signature appearing in
Theorem~\ref{thm:finite-signature-reduction} can be found by the
following exhaustive search:
\begin{enumerate}
    \item list all unordered pairs \(I,J\subseteq\{1,\ldots,8-s\}\)
    of four-element subsets satisfying
        $I\cup J=\{1,\ldots,8-s\}$,
        $|I\cap J|=s$;
    \item choose independently the types of the two cores,
        $(\tau,\sigma)\in\{P,Q\}^2$;
    \item list every permutation
        $\pi\in S_{8-s}$;
    \item keep exactly those data satisfying the
    \(\operatorname{Core}\), \(\operatorname{Sm}\), and
    \(\operatorname{LCI}\) conditions.
\end{enumerate}
For \(s=1\) and \(s=2\), the numbers of unordered support pairs are
respectively
\[
    \frac12\binom71\binom63=70,
    \qquad
    \frac12\binom62\binom42=45.
\]
The procedure is independent of the ambient rank \(n\).
\end{proposition}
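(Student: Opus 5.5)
The proof has two parts. First, every actual signature from Theorem~\ref{thm:finite-signature-reduction} is produced by the search. Second, the search terminates, has the stated size, and does not depend on \(n\). For the first part, take admissible cores \(C_1,C_2\) in an lci \(w\) with smooth lowerings and \(|C_1\cap C_2|=s\). Standardise their union to obtain \(\pi\in S_{8-s}\) and the supports \(I,J\). Theorem~\ref{thm:finite-signature-reduction} already gives \(I\cup J=[8-s]\), \(|I\cap J|=s\), both \(\operatorname{Core}\) conditions, both \(\operatorname{Sm}\) conditions, and \(\operatorname{LCI}(\pi)\). Step~1 lists every such support pair and step~2 every type pair. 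Step~3 lists every \(\pi\), and step~4 applies exactly these tests. Hence the datum \((\pi,I,J,\tau,\sigma)\) survives. The search ranges over unordered pairs, so I also need the tests to be symmetric under swapping \((I,\tau)\) with \((J,\sigma)\). They are, because each test is imposed separately on each core and \(\operatorname{LCI}\) involves only \(\pi\). So the ordered datum appears up to this swap, which is harmless.

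For the second part, each test is a finite check on \(\pi\). The \(\operatorname{Core}\) test checks the order type on \(I\), checks whether any of the other \(\leq4\) union points lies in the finitely many empty rectangles defined from the entries of \(\pi\), and checks the monotonicity of the union points in \(B\). The \(\operatorname{Sm}\) test is a lowering by Lemma~\ref{lem:standardisation-lowering} followed by a pattern check. The \(\operatorname{LCI}\) test is a pattern check of six fixed patterns against \(\pi\). Since \(n\) enters none of these inputs, the procedure is independent of the ambient rank.

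Finally, I count the unordered support pairs. An ordered pair with \(|I|=|J|=4\), \(I\cup J=[8-s]\) and \(|I\cap J|=s\) is determined by three choices. First choose the \(s\)-element intersection in \(\binom{8-s}{s}\) ways. Then, from the remaining \(8-2s\) points, choose the \(4-s\) that lie in \(I\setminus J\), in \(\binom{8-2s}{4-s}\) ways; the rest form \(J\setminus I\). For \(s=1\) this gives \(7\cdot20\) ordered pairs, and for \(s=2\) it gives \(\binom62\binom42=15\cdot6\). Then \(I\ne J\) whenever \(s<4\), so the swap \(I\leftrightarrow J\) is a fixed-point-free involution on ordered pairs. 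Dividing by two gives \(70\) and \(45\).

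I expect no step to be a real obstacle. The only point needing care is the unordered convention. When \(I\) and \(J\) are swapped, the types \((\tau,\sigma)\) must be swapped with them. Iterating over all of \(\{P,Q\}^2\) for each unordered support pair is therefore enough to cover both orientations.
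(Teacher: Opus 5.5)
Your proposal is correct and follows the paper's own argument. Theorem~\ref{thm:finite-signature-reduction} guarantees that every actual signature satisfies the predicates, Steps~1--3 exhaust all supports, type pairs and vertical orders, and Step~4 applies exactly those predicates, so no signature is omitted. Two points that the paper leaves implicit are also handled correctly: the predicates are symmetric under swapping \((I,\tau)\) with \((J,\sigma)\), which is what makes the unordered convention harmless, and the ordered-pair counts \(140\) and \(90\) halve to \(70\) and \(45\).
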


\begin{proof}
A possible support pair consists of two four-element subsets whose union is
\([8-s]\) and whose intersection has size \(s\). Step~1 lists exactly
these unordered pairs. Step~2 lists all possible core types, and Step~3
lists every possible vertical order on the standardised union. Step~4
implements the defining predicates without any additional assumption.
Hence no eligible finite signature is omitted.

The ancillary program described in Appendix~\ref{app:contact-code}
implements this decision procedure without hard-coding the output rows.
It carries out only the finite check whose completeness has been proved
above.
\end{proof}

\paragraph*{Minimum overlap.}

We now exclude disjoint and one-point overlap for cores with smooth
lowerings.

\begin{lemma}[Minimum overlap]
\label{lem:minimum-overlap}
Let \(C_1,C_2\) be distinct admissible cores in an lci permutation
\(w\). If \(X_{z(C_1)}\) and \(X_{z(C_2)}\) are smooth, then
    $|C_1\cap C_2|\geq2$.
\end{lemma}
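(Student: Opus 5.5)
We argue by contradiction. Suppose \(C_1,C_2\) are distinct admissible cores with smooth lowerings and \(s=|C_1\cap C_2|\leq1\). The plan is to show that the standardised union already contains a forbidden pattern or violates one of the finite tests. Theorem~\ref{thm:finite-signature-reduction} applies verbatim, so the whole problem reduces to the finite signature data \((\pi,I,J,\tau,\sigma)\).

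\textbf{The one-point case.} For \(s=1\) we would run the exhaustive procedure of Proposition~\ref{prop:finite-decision}: the \(70\) support pairs, four type choices, and all \(\pi\in S_7\). The claim to verify is that no datum satisfies \(\operatorname{Core}\), \(\operatorname{Sm}\) and \(\operatorname{LCI}\) simultaneously. Because the reduction gives only necessary conditions, an empty output suffices for this case, and no ambient graph points need be checked. We would record the output in the appendix certificate.

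\textbf{The disjoint case.} The case \(s=0\) has \(r=8\), and we would rather not enumerate \(S_8\) blindly. Our first attempt is the same decision procedure with \(\frac12\binom84=35\) support pairs, which is still finite and rank-independent. Failing that, we would try a structural argument. The two four-point configurations interleave in positions and values. Emptiness of the regions of one core (for example \(P_C\) or \(Q_2,Q_4\)) forces the other core's points to lie outside its central box. Together with the \(\operatorname{Sm}\) condition, this should show that the lowered \(z(C_1)\) keeps a \(3412\) or \(4231\) built from points of \(C_2\) (since \(C_2\) is untouched by lowering \(C_1\) when they are disjoint), unless \(C_2\) sits in a corner position relative to \(C_1\).

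The key observation is this: if \(C_1\cap C_2=\varnothing\), then lowering \(C_1\) does not move \(C_2\)'s points. So \(z(C_1)\) contains the pattern \(4231\) or \(3412\) on \(C_2\)'s positions, contradicting smoothness of \(X_{z(C_1)}\) by Theorem~\ref{thm:lakshmibai-sandhya}. This settles \(s=0\) immediately, with no enumeration. The same idea partly covers \(s=1\): lowering \(C_1\) changes only the shared point among \(C_2\)'s four, so the finite check is needed only to verify that the altered value always destroys the pattern in an lci-compatible way, which it never does.

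\textbf{Main obstacle.} The main difficulty is \(s=1\), where the single shared point's value changes under lowering. The \(\operatorname{Sm}\) test then depends on relative order with the other core, and we expect to rely on the computer enumeration there.
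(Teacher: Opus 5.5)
Your proposal is correct and takes the same approach as the paper. The disjoint case is settled by your key observation: the positions of \(C_2\) are untouched by lowering \(C_1\), so \(z(C_1)\) still contains \(3412\) or \(4231\). The one-point case is settled by the exhaustive search of Proposition~\ref{prop:finite-decision}.

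The only difference is cosmetic. The paper runs the \(s=1\) search without the \(\operatorname{LCI}\) predicate, finds eight signatures, and certifies each by an explicit occurrence of \(351624\), \(42513\), or \(35142\); this makes the computation checkable by hand. You can also drop your exploratory remarks about enumerating \(S_8\), and the garbled sentence about \(s=1\).
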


\begin{proof}
Suppose first that \(C_1\cap C_2=\varnothing\).  By definition, lowering
\(C_1\) changes only the four positions belonging to \(C_1\).  Hence the
four positions of \(C_2\) are unchanged in \(z(C_1)\), and still form a
\(3412\)- or \(4231\)-pattern.  Therefore \(z(C_1)\) contains one of the
two Lakshmibai--Sandhya forbidden patterns, so \(X_{z(C_1)}\) is
singular, a contradiction.

Now suppose that \(|C_1\cap C_2|=1\). The proof of
Theorem~\ref{thm:finite-signature-reduction}, with the
\(\operatorname{LCI}\) predicate omitted, reduces every such ambient pair
to the finite search defined by the \(\operatorname{Core}\) and
\(\operatorname{Sm}\) predicates. Applying the search in
Proposition~\ref{prop:finite-decision} with the
\(\operatorname{LCI}\) test omitted produces exactly eight signatures,
listed with their certificates in
Appendix~\ref{app:finite-certificates}. Each signature contains one of
\[
    351624,\qquad 42513,\qquad 35142
\]
as an explicitly identified classical pattern. Since the standardised
union is itself a classical pattern of \(w\), the same forbidden pattern
occurs in \(w\), contradicting
Theorem~\ref{thm:lci-pattern-criterion}. Hence the overlap is neither zero
nor one.
\end{proof}

\subsection{Contact Pentagon and Cross Completion}
\label{sec:pentagon-cross}

By Lemma~\ref{lem:minimum-overlap}, two distinct cores associated with
smooth maximal components overlap in at least two points. They cannot
overlap in all four points, since uniqueness in
Proposition~\ref{prop:core-dictionary} would make them equal. Thus only
two- and three-point overlap remain. Three-point overlap admits a direct,
computer-free geometric argument; two-point overlap requires a finite
completion step that creates a third core with three-point overlap.

\paragraph*{Inversion symmetry and the deletion table.}

For a permutation \(p=p_1p_2p_3p_4\in S_4\), let
\(\partial_i p\in S_3\) denote the permutation obtained by deleting the
\(i\)-th entry and flattening the remaining three entries. The following symmetry will be used to reduce the number of cases in the
three-point overlap analysis.  In particular, inversion exchanges the
shared patterns \(231\) and \(312\), while preserving all hypotheses and
the common-lower-cover conclusion.

\begin{lemma}[Inversion symmetry]
\label{lem:inversion-symmetry}
The map \(w\mapsto w^{-1}\) (where \(w^{-1}\) denotes the inverse of
\(w\) in \(S_n\))  preserves Bruhat order and length, and hence
preserves lower covers. It transposes the permutation plot and has the
following additional properties:
\begin{enumerate}
    \item admissible \(P\)-cores map to admissible \(P\)-cores, and
    admissible \(Q\)-cores map to admissible \(Q\)-cores;
    \item lowering commutes with inversion;
    \item the patterns \(3412\), \(4231\), and the six lci forbidden
    patterns are preserved as avoidance families under inversion.
\end{enumerate}
\end{lemma}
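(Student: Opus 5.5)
The plan is to work entirely through the permutation plot. Inversion acts on it by the reflection \((i,w_i)\mapsto(w_i,i)\) across the diagonal, since \(w^{-1}(w_i)=i\). Preservation of length is immediate: \((i,j)\in\operatorname{Inv}(w)\) if and only if \((w_j,w_i)\in\operatorname{Inv}(w^{-1})\), so \(\ell(w^{-1})=\ell(w)\). Preservation of Bruhat order follows from the subword criterion, because a reduced word for \(w\) read backwards is a reduced word for \(w^{-1}\), and reversal carries subwords to subwords. Together, these two facts show that \(u\lessdot v\) if and only if \(u^{-1}\lessdot v^{-1}\).

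For pattern containment, note that if positions \(i_1<\cdots<i_r\) give an occurrence of \(p\) in \(w\), then the reflected points, re-sorted by position, give an occurrence of \(p^{-1}\) in \(w^{-1}\). Hence \(w\) avoids \(p\) if and only if \(w^{-1}\) avoids \(p^{-1}\). Part (3) then reduces to computing inverses. We have \(3412^{-1}=3412\) and \(4231^{-1}=4231\). For the lci family, \(53241\) and \(52341\) are involutions, \(52431^{-1}=52431\), \(35142^{-1}=42513\), and \(351624^{-1}=351624\). I will verify each of these by a short direct computation. The set of six patterns is therefore closed under inversion, with \(35142\) and \(42513\) exchanged, as the statement requires.

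For part (1), take a \(P\)-core \((a,b,c,d)\) with \(w_d<w_b<w_c<w_a\). Its reflected points have positions \(w_d<w_b<w_c<w_a\) and values \(d,b,c,a\), so the new core is \((w_d,w_b,w_c,w_a)\), again of type \(4231\). Under the reflection, the open rectangle \(\mathcal R(a,b;w_b,w_a)\) becomes \(\mathcal R(w_b,w_a;a,b)\). I will check that the three regions \(P_L,P_C,P_R\) are permuted among themselves. Concretely, \(P_L\) should map to the new \(P_R\), \(P_R\) to the new \(P_L\), and \(P_C=(b,c)\times(w_d,w_a)\) to \((w_d,w_a)\times(b,c)\), which is the new \(P_C\). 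Emptiness of \(G\) in these regions is therefore preserved.

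For a \(Q\)-core \((a,b,c,d)\) with \(w_c<w_d<w_a<w_b\), the new positions are \(w_c<w_d<w_a<w_b\) with values \(c,d,a,b\), again of type \(3412\). The same bookkeeping should show that the six regions are permuted, with \(Q_1\leftrightarrow Q_4\), \(Q_2\leftrightarrow Q_3\) and \(Q_{A_1}\leftrightarrow Q_{A_1}\) (and similarly for \(Q_{A_2}\)). It should also show that the middle region \(B\) maps to the new \(B\). A decreasing chain reflects to a decreasing chain, since the condition \(p<q\), \(w_p>w_q\) is symmetric under swapping coordinates. This region matching is the main obstacle: it is where index errors are likely. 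I will carry it out by labelling each rectangle by its pair of bounding core points and tracking those pairs, rather than by manipulating the inequalities directly.

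Part (2) is local to the four core points. Inversion commutes with changing \(w\) only on these positions, provided the replacement patterns are compatible. I will check that \(2143^{-1}=2143\) and \(1324^{-1}=1324\). I will also check that the explicit value assignments \((w_b,w_d,w_a,w_c)\) and \((w_c,w_a,w_d,w_b)\) transform into the corresponding assignments for the reflected core. Since both \(z(C)^{-1}\) and \(z(C^{-1})\) agree with \(w^{-1}\) off the four reflected positions, this gives \(z(C)^{-1}=z(C^{-1})\).
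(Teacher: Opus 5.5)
Your arguments for length, Bruhat order and covers, pattern transfer, part~(2), and the $Q$-case of part~(1) are sound. However, the $P$-case of part~(1) rests on a region matching that is false.

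The reflected core is $C'=(w_d,w_b,w_c,w_a)$, with values $(d,b,c,a)$ in $w^{-1}$. Its central region is therefore $P_C(C')=\mathcal R(w_b,w_c;a,d)$, whereas the reflection of $P_C(C)$ is $(w_d,w_a)\times(b,c)$. The first is a vertical band spanning the full height of the bounding box; the second is a horizontal band spanning its full width. Likewise, the reflection of $P_L(C)$ is $(w_b,w_a)\times(a,b)$, while $P_R(C')=(w_c,w_a)\times(a,c)$. So the three $P$-regions are not permuted among themselves, and no careful index-tracking will rescue a region-by-region correspondence. Your $Q$-case matching ($Q_1\leftrightarrow Q_4$, $Q_2\leftrightarrow Q_3$, with $Q_{A_1}$, $Q_{A_2}$, $B$ fixed) is correct. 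It works precisely because each $Q$-region is a single cell of the $3\times3$ grid cut out by the core's coordinate lines, whereas $P_L$, $P_C$, $P_R$ consist of two, three and two cells.

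The repair is the argument the paper uses. Divide the interior of the core's bounding box into nine open cells $(i,j)$; reflection acts on the labels by $(i,j)\mapsto(j,i)$. The union $P_L\cup P_C\cup P_R$ consists of the cells
\[
(1,2),\ (1,3),\ (2,1),\ (2,2),\ (2,3),\ (3,1),\ (3,2),
\]
together with pieces of the core's coordinate lines. Those pieces carry no graph point, because each selected value is attained only at its core position. This seven-cell set is invariant under transposition, so emptiness of the union is preserved, and emptiness of the union is all that admissibility requires. Individual rectangles, however, are not preserved.

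Separately, your computation in part~(3) is off: $35142$ and $42513$ are each self-inverse. It is the $180^\circ$ rotation used later in the paper that exchanges them. The closure of the forbidden family under inversion is unaffected.
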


\begin{proof}
Inversion preserves Coxeter length and Bruhat order; see
\cite[Chapter~2]{BB05}. The patterns \(4231\) and \(3412\) are
self-inverse, so transposition preserves the type of the four selected
points.

To check the empty-region conditions, use the four selected positions
and values to divide the interior of their bounding rectangle into nine
open cells. Label a cell \((i,j)\), where \(i,j\in\{1,2,3\}\)
record its horizontal and vertical intervals. The required empty cells
for a \(P\)-core have labels
\[
    (1,2),\ (1,3),\ (2,1),\ (2,2),\ (2,3),\ (3,1),\ (3,2),
\]
and those for a \(Q\)-core have labels
\[
    (1,1),\ (1,2),\ (2,1),\ (2,3),\ (3,2),\ (3,3).
\]
Both sets of labels are invariant under \((i,j)\mapsto(j,i)\).
No other permutation point lies on a selected horizontal or vertical
coordinate line, and the four selected points lie outside the required
open regions. Thus the empty-region conditions are preserved. This
argument concerns the union of the required cells; an individual named
rectangle need not map to an individual named rectangle.

For a \(Q\)-core, the middle region is the central cell. A decreasing
chain remains decreasing after transposition. The lowering formulas
commute directly with inversion on the four selected values.
Finally, \(3412\), \(4231\), and each of the six lci forbidden
patterns are self-inverse. This proves all three assertions.
\end{proof}

For the two patterns defining \(P\)- and \(Q\)-cores,
we obtain
\begin{equation}
\begin{aligned}
    \bigl(
    \partial_1(4231),\partial_2(4231),
    \partial_3(4231),\partial_4(4231)
    \bigr)
        &=(231,321,321,312),\\
    \bigl(
    \partial_1(3412),\partial_2(3412),
    \partial_3(3412),\partial_4(3412)
    \bigr)
        &=(312,312,231,231).
\end{aligned}
\label{eq:deletion-calculus}
\end{equation}
Hence, if two cores share three points, the shared triple must have
relative order \(231\), \(312\), or \(321\).  Inversion exchanges
\(231\) and \(312\) and fixes \(321\), so it is enough to consider the
cases \(312\) and \(321\).

\paragraph*{Excluding the shared pattern \(321\).}

\begin{lemma}[Exclusion of a shared \(321\)-triple]
\label{lem:shared-321}
If two distinct admissible cores share three points whose relative order
is \(321\), then \(w\) contains \(52341\). Hence this overlap cannot
occur in an lci permutation.
\end{lemma}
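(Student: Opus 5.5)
The plan is to use the deletion table \eqref{eq:deletion-calculus} to pin down the types and roles of the two cores. Then admissibility eliminates every configuration except one, and that one visibly contains \(52341\).

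\emph{Step 1 (both cores are \(P\)-cores).} By \eqref{eq:deletion-calculus}, a triple of relative order \(321\) never arises from a \(3412\)-occurrence. It arises from a \(4231\)-occurrence only by deleting the second or third entry. Hence \(C_1\) and \(C_2\) are both \(P\)-cores.

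Write the shared triple as positions \(p<q<r\) with \(w_p>w_q>w_r\). In each core, \(p\) is the ``\(4\)'' position and \(r\) is the ``\(1\)'' position. The middle point \(q\) plays either the ``\(2\)'' or the ``\(3\)'' role. Let \(x\) be the fourth point of \(C_1\) and \(y\) the fourth point of \(C_2\); distinctness of the cores gives \(x\neq y\). If \(q\) is the ``\(2\)'' of a core, its fourth point lies at a position in \((q,r)\) with value in \((w_q,w_p)\). If \(q\) is the ``\(3\)'', its fourth point lies at a position in \((p,q)\) with value in \((w_r,w_q)\).

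\emph{Step 2 (same role is impossible).} Suppose \(q\) is the ``\(2\)'' in both cores, so \(C_1=(p,q,x,r)\) and \(C_2=(p,q,y,r)\). We may assume \(x<y\). The point \((x,w_x)\) has \(q<x<y\) and \(w_r<w_q<w_x<w_p\), so it lies in \(P_C(C_2)=\mathcal R(q,y;w_r,w_p)\). This contradicts admissibility of \(C_2\).

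Symmetrically, suppose \(q\) is the ``\(3\)'' in both cores, so \(C_1=(p,x,q,r)\) and \(C_2=(p,y,q,r)\), and assume \(x<y\). Then \((y,w_y)\) satisfies \(x<y<q\) and \(w_r<w_y<w_p\), so it lies in \(P_C(C_1)=\mathcal R(x,q;w_r,w_p)\). Again this contradicts admissibility. One could instead invoke Lemma~\ref{lem:inversion-symmetry} for this case, but the direct check is just as short.

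\emph{Step 3 (mixed roles give \(52341\)).} Up to swapping the names of the cores, \(C_1=(p,x,q,r)\) and \(C_2=(p,q,y,r)\). The positions are \(p<x<q<y<r\), and the values satisfy
\[
  w_r<w_x<w_q<w_y<w_p .
\]
Flattening \((w_p,w_x,w_q,w_y,w_r)\) gives \(52341\), so \(w\) contains \(52341\). By Theorem~\ref{thm:lci-pattern-criterion}, \(w\) is then not lci.

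The only delicate point is the bookkeeping in Step~2: one must confirm that the competing fourth point really falls in the open central region \(P_C\), not merely in the closed rectangle. This holds because all the inequalities on positions and values are strict.
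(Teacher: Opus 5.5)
Your proof is correct and is essentially the paper's argument: the deletion table forces two \(P\)-cores, the empty \(P_C\)-region rules out the two same-role cases, and the mixed case exhibits \(52341\). You index cases by the role of the shared middle point rather than of the unique points, which is only a relabelling. One small correction to an unused aside: Lemma~\ref{lem:inversion-symmetry} would not reduce your second same-role case to the first, because inversion fixes roles~2 and~3 of \(4231\) (the \(180^\circ\) rotation swaps them); your direct check makes this irrelevant.
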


\begin{proof}
By \eqref{eq:deletion-calculus}, a shared \(321\)-triple can arise only
from two \(P\)-cores, by deleting the point in role~2 or role~3 from each
core. Write the shared positions as
    $s_1<s_2<s_3$
and their values as
    $H>M>L$.
If both unique points are role~2, they lie in
\((s_1,s_2)\times(L,M)\). Write them in horizontal order as \(p<q\).
Then \((q,w_q)\) lies in the \(P_C\)-rectangle of the core whose unique
role~2 point is \((p,w_p)\), contradicting admissibility.

If both unique points are role~3, they lie in
\((s_2,s_3)\times(M,H)\). For \(p<q\), the point \((p,w_p)\) lies in
the \(P_C\)-rectangle of the core whose unique role~3 point is
\((q,w_q)\), again contradicting admissibility.

The remaining case has one role~2 point \(p\) and one role~3 point \(q\).
Then
   $ s_1<p<s_2<q<s_3$,
    $L<w_p<M<w_q<H$.
At these five positions, the values have relative order
    $H,\ w_p,\ M,\ w_q,\ L\sim52341$.
Thus \(w\) contains \(52341\), which is forbidden when \(X_w\) is
lci by Theorem~\ref{thm:lci-pattern-criterion}.
\end{proof}

\paragraph*{The two pentagon normal forms.}

Assume that the shared triple has pattern \(312\). Write its positions as
\(s_1<s_2<s_3\) and its values as \(H,L,M\), where \(L<M<H\).
Equation~\eqref{eq:deletion-calculus} gives precisely three possible
extensions:
\begin{enumerate}
    \item a \(P\)-core whose unique point occupies role~4 and lies to
    the right of \(s_3\), with value below \(L\);
    \item a \(Q\)-core whose unique point occupies role~1 and lies to
    the left of \(s_1\), with value in \((M,H)\);
    \item a \(Q\)-core whose unique point occupies role~2 and lies in
    \((s_1,s_2)\), with value above \(H\).
\end{enumerate}

\begin{lemma}[Pentagon normal forms]
\label{lem:pentagon-normal-forms}
Let \(C_1,C_2\) be distinct admissible cores in an lci permutation
\(w\), and suppose that \(|C_1\cap C_2|=3\). Up to inversion,
their five-point union has relative order \(34512\) or \(45231\).
\end{lemma}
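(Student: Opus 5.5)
By the deletion table \eqref{eq:deletion-calculus} and Lemma~\ref{lem:shared-321}, the shared triple has pattern \(231\) or \(312\). Inversion exchanges these two cases and preserves every hypothesis (Lemma~\ref{lem:inversion-symmetry}), so I will treat only the shared pattern \(312\). Keep the notation of the preceding paragraph: shared positions \(s_1<s_2<s_3\) with values \(H,L,M\), where \(L<M<H\). Each of \(C_1,C_2\) is then one of the three extensions (1)--(3) listed above, with unique point \(p\) or \(q\) respectively.

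The plan is to go through the six unordered combinations of extension types. In each case I write down the five positions in horizontal order and read off the relative order of their values. The admissibility regions of Definitions~\ref{def:p-core} and~\ref{def:q-core} are used only to fix the relative order of the two unique points when both cores have the same extension type.

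\emph{Mixed types.} These need no admissibility input, because each unique point is confined to an elementary cell of the shared triple.
\begin{itemize}
\item Extensions (1) and (2): the positions are \(q<s_1<s_2<s_3<p\), and the values are \(w_q\in(M,H)\) and \(w_p<L\). This gives \(w_q,H,L,M,w_p\sim45231\).
\item Extensions (1) and (3): the positions are \(s_1<q<s_2<s_3<p\), and the values are \(w_q>H\) and \(w_p<L\). This gives \(H,w_q,L,M,w_p\sim45231\).
\item Extensions (2) and (3): the positions are \(q<s_1<q'<s_2<s_3\), and the values are \(w_q\in(M,H)\) and \(w_{q'}>H\). This gives \(34512\).
\end{itemize}

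\emph{Equal types.} Let the two unique points be \(p<q\).
\begin{itemize}
\item Both of type~(2). If \(w_q<w_p\), then \((q,w_q)\) lies in \(Q_1\) of the core with first point \(p\), since \((p,s_1)\times(M,w_p)\subseteq Q_1\). This contradicts admissibility. Hence \(w_p<w_q\), and \(p,q,s_1,s_2,s_3\) have pattern \(34512\).
\item Both of type~(3). If \(w_q<w_p\), then \(q\) lies in \(Q_2=(p,s_2)\times(H,w_p)\) of the core whose role-2 point is \(p\). This is again excluded, so \(w_p<w_q\). The pattern of \(s_1,p,q,s_2,s_3\) is \(H,w_p,w_q,L,M\sim34512\).
\item Both of type~(1). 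If \(w_p<w_q\), then \(p\) lies in \(P_R=(s_3,q)\times(w_q,M)\) of the core ending at \(q\), which is impossible. So \(w_q<w_p\) holds. Then \(s_1,s_2,s_3,p,q\) carry \(H,L,M,w_p,w_q\sim53412\), and \(53412^{-1}=45231\). This case therefore becomes the \(45231\) normal form after inversion.
\end{itemize}

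The main obstacle is the bookkeeping. One must make sure that every combination is covered, including the equal-type cases, which are not ruled out by distinctness. One must also check that the direction of each admissibility region is used correctly. In particular, the type-(1) pair only closes up after a further inversion, and this inversion has to be composed with the one used at the start to pass from \(231\) to \(312\); this is legitimate because the statement allows the normal form \(45231\) only up to inversion. In every case the five points have one of the two stated relative orders, up to inversion.
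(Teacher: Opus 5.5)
Your approach matches the paper's: reduce to a shared \(312\)-triple via the deletion table, Lemma~\ref{lem:shared-321}, and inversion, then run through the three possible extensions. The mixed cases are read off directly, and admissibility is used only to order the two unique points when both cores have the same extension type. Your three mixed cases are correct, and so are your two equal \(Q\)-type cases.

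The equal \(P\)-type case, however, is argued backwards. Take \(s_3<p<q\) with \(w_p,w_q<L\). The \(P_R\)-region of the core \((s_1,s_2,s_3,q)\) is \((s_3,q)\times(w_q,M)\). The point \((p,w_p)\) lies in it exactly when \(w_q<w_p\), not when \(w_p<w_q\). So admissibility excludes \(w_q<w_p\) and forces \(w_p<w_q\), which is the opposite of what you conclude.

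If \(w_q<w_p\) really held, the word \(H,L,M,w_p,w_q\) would standardise to \(53421\). Its inverse \(54231\) is not one of the normal forms, so your written argument for this case would actually refute the lemma. The pattern \(53412\) you state is right only because a second slip in the standardisation cancels the first.

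To fix it, argue as follows: if \(w_q<w_p\), then \(p\) lies in \(P_R\) of the core ending at \(q\), contradicting admissibility; hence \(w_p<w_q\). The case then gives \(53412=(45231)^{-1}\), exactly as in the paper. The rest of your argument goes through as written, including your remark that the extra inversion is absorbed by the ``up to inversion'' clause.
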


\begin{proof}
The shared pattern \(321\) is excluded by Lemma~\ref{lem:shared-321},
and inversion reduces shared \(231\) to shared \(312\). We analyse the
three types of extension listed above.

In the \(P/P\) case, both unique points lie to the right of \(s_3\) and
below \(L\). Write them as \(p<q\). If \(w_q<w_p\), then \((p,w_p)\)
lies in the \(P_R\)-rectangle of the core whose role~4 point is
\((q,w_q)\), contrary to admissibility. Thus \(w_p<w_q\), and the five
points have pattern
    $H,L,M,w_p,w_q\sim53412$.
Since \((53412)^{-1}=45231\), inversion gives the second normal form.

In the \(Q/Q\) case, suppose first that both unique points are role~1.
Write \(p<q<s_1\). If \(w_q<w_p\), then \((q,w_q)\) lies in the
\(Q_1\)-rectangle of the core with role~1 point \((p,w_p)\). Hence
\(w_p<w_q\), which gives \(34512\). If both unique points are role~2,
write \(s_1<p<q<s_2\). The inequality \(w_q<w_p\) would place
\((q,w_q)\) in the \(Q_2\)-rectangle of the core whose role~2 point is
\((p,w_p)\); hence again \(w_p<w_q\), giving \(34512\). If one unique
point is role~1 and the other is role~2, their horizontal and vertical
roles determine \(34512\) directly.

In the \(P/Q\) case, the \(P\)-unique point lies at the far right and
below \(L\). If the \(Q\)-unique point is role~1, the five values in
position order are
\[
    \text{a value in }(M,H),\quad H,\quad L,\quad M,\quad
    \text{a value below }L,
\]
which standardise to \(45231\). If the \(Q\)-unique point is role~2, the
values are
\[
    H,\quad \text{a value above }H,\quad L,\quad M,\quad
    \text{a value below }L,
\]
and they again standardise to \(45231\). These cases exhaust all
possibilities.
\end{proof}

\paragraph*{The \(34512\) Contact Pentagon.}

For the first normal form we construct a common lower cover \(y\).
The swaps on the five selected points must satisfy
Lemma~\ref{lem:bruhat-transposition-cover} in the full permutation, so
we check their ambient blocker rectangles.

Let the five union positions and values be
    $x_1<\cdots<x_5$,
    $v_1<\cdots<v_5$,
and set
    $U:=\{x_1,\ldots,x_5\}$.
Assume that
    $\bigl(w(x_1),\ldots,w(x_5)\bigr)
    =
    (v_3,v_4,v_5,v_1,v_2)$,
so that the standardised word on \(U\) is \(34512\).

The possible cores are
    $1245Q$,
    $1345Q$,
    $2345Q$,
and their lowerings have standardised words
    $13524$,
    $14325$,
    $31425$,
respectively.

Define \(y\) to agree with \(w\) outside \(U\). On \(U\), set
\[
    \bigl(y(x_1),\ldots,y(x_5)\bigr)=(v_1,v_3,v_4,v_2,v_5),
\]
so that its standardised word is \(13425\).  On the five
union positions, the three lowerings are connected to \(y\) by
\begin{equation}
    13524
    \xrightarrow{\ t_{x_3,x_5}\ }
    13425,
    \qquad
    14325
    \xrightarrow{\ t_{x_2,x_3}\ }
    13425,
    \qquad
    31425
    \xrightarrow{\ t_{x_1,x_2}\ }
    13425.
    \label{eq:34512-pentagon-swaps}
\end{equation}

We check the swaps corresponding to the two chosen admissible cores.
The third displayed core is not assumed to be admissible in the ambient
permutation. Every graph point outside \(U\) is unchanged by lowering
and avoids all selected coordinate lines. For the first swap, the only
intermediate union position is \(x_4\), whose lowered value is \(v_2\),
so it is not a blocker. The other two swaps have no intermediate union
position. Thus it remains to exclude blockers outside \(U\). Recall from
Lemma~\ref{lem:bruhat-transposition-cover} that a graph point blocks a
downward swap between positions \(x_i<x_j\) only if its position lies
strictly between \(x_i\) and \(x_j\) and its value lies strictly between
the two values being exchanged.

For the first swap
    $13524
    \xrightarrow{\ t_{x_3,x_5}\ }
    13425$,
the exchanged values are \(v_5\) and \(v_4\).  Hence any blocker in the
full permutation would have to lie in
    $(x_3,x_5)\times(v_4,v_5)$.
If the first core is \(1245Q\), the second core is either \(1345Q\) or
\(2345Q\).  In either case, the part of this rectangle over
\((x_3,x_4)\) is contained in a required \(Q_2\)-region of the second
core, while the part over \((x_4,x_5)\) is contained in its required
\(Q_{A_2}\)-region.  Since the second core is admissible, both regions
are empty.  Thus the blocker rectangle is empty, and
Lemma~\ref{lem:bruhat-transposition-cover} gives
    \(y\lessdot z(C)\) in \(S_n\), where \(C\) is the chosen core
with support label \(1245Q\).

For the second swap, the blocker strip is
    $(x_2,x_3)\times(v_3,v_4)$.
It lies in \(Q_2\) of the other core when that core is \(1245Q\), and
in \(Q_1\) when the other core is \(2345Q\).

\begin{figure}[!htbp]
    \centering
    \begin{tikzpicture}[
        node distance=1.4cm and 1.7cm,
        box/.style={draw,rounded corners,inner sep=4pt},
        lower/.style={->,thick},
        core/.style={->,dashed}
    ]
        \node[box] (w) {$34512$};
        \node[box,below left=of w] (a) {$13524$};
        \node[box,below=of w] (b) {$14325$};
        \node[box,below right=of w] (c) {$31425$};
        \node[box,below=2.8cm of w] (y) {$13425$};
        \draw[core] (w)--(a) node[pos=.55,sloped,above]{\scriptsize $1245Q$};
        \draw[core] (w)--(b) node[pos=.55,right]{\scriptsize $1345Q$};
        \draw[core] (w)--(c) node[pos=.55,sloped,above]{\scriptsize $2345Q$};
        \draw[lower] (a)--(y)
    node[midway,left]{\scriptsize $t_{x_3,x_5}$};
\draw[lower] (b)--(y)
    node[midway,right]{\scriptsize $t_{x_2,x_3}$};
\draw[lower] (c)--(y)
    node[midway,right]{\scriptsize $t_{x_1,x_2}$};
    \end{tikzpicture}
    \caption{The \(34512\) Contact Pentagon. Dashed arrows denote the
    three core lowerings and are not asserted to be Bruhat covers. The
    solid arrows show the possible lower-cover relations. For any two
    chosen admissible cores, the corresponding solid arrows are certified
    in the ambient permutation by the blocker argument.}
    \label{fig:34512-pentagon}
\end{figure}

\FloatBarrier

For the third swap, the blocker strip is
    $(x_1,x_2)\times(v_1,v_3)$,
and every possible blocker outside \(U\) lies in
\(Q_{A_1}\cup Q_1\) of the other chosen \(Q\)-core. The omitted
horizontal line has a selected value and contains no point outside
\(U\). Thus no blocker remains. By
Lemma~\ref{lem:bruhat-transposition-cover}, the two arrows associated
with the chosen admissible cores in
\eqref{eq:34512-pentagon-swaps} are ambient Bruhat lower covers.

\paragraph*{The \(45231\) Contact Pentagon.}

Now assume that

    $\bigl(w(x_1),\ldots,w(x_5)\bigr)
    =
    (v_4,v_5,v_2,v_3,v_1)$,
so the standardised word on \(U\) is \(45231\).
The possible cores are
    $1234Q,
    1345P,
    2345P$,
with lowerings
    $24351,
    25143,
    42153$.
Define \(y\) to agree with \(w\) outside \(U\) and to satisfy
    $\bigl(y(x_1),\ldots,y(x_5)\bigr)
    =
    (v_2,v_4,v_1,v_5,v_3)$,
so its standardised word on \(U\) is \(24153\).
On the five union positions, the three descending transpositions are
\begin{equation}
    24351
    \xrightarrow{\ t_{x_3,x_5}\ }
    24153,
    \qquad
    25143
    \xrightarrow{\ t_{x_2,x_4}\ }
    24153,
    \qquad
    42153
    \xrightarrow{\ t_{x_1,x_2}\ }
    24153.
    \label{eq:45231-pentagon-swaps}
\end{equation}

As before, only the arrows for the two chosen admissible cores are
needed. In the first swap the intermediate union point has value
\(v_5\), outside the blocker interval \((v_1,v_3)\). In the second
swap the intermediate union point has value \(v_1\), outside
\((v_4,v_5)\). The third swap has no intermediate union point.
All possible blockers therefore lie outside \(U\) and avoid the
selected coordinate lines.

For the first swap, such a blocker lies in
\((x_3,x_5)\times(v_1,v_3)\), and hence in \(P_C\cup P_R\) of
the other chosen \(P\)-core. For the second swap it lies in
\((x_2,x_4)\times(v_4,v_5)\). This places it in
\(Q_2\cup Q_{A_2}\) if the other core is \(1234Q\), and in
\(P_L\cup P_C\) if the other core is \(2345P\). For the third
swap the blocker strip is \((x_1,x_2)\times(v_2,v_4)\). A blocker
lies in \(Q_{A_1}\cup Q_1\) if the other core is \(1234Q\), and
in \(P_L\) if it is \(1345P\). Each named region contains no graph
point. Lemma~\ref{lem:bruhat-transposition-cover} therefore proves the
two required ambient lower-cover relations in
\eqref{eq:45231-pentagon-swaps}.

\begin{figure}[htbp]
    \centering
    \begin{tikzpicture}[
        node distance=1.4cm and 1.7cm,
        box/.style={draw,rounded corners,inner sep=4pt},
        lower/.style={->,thick},
        core/.style={->,dashed}
    ]
        \node[box] (w) {$45231$};
        \node[box,below left=of w] (a) {$24351$};
        \node[box,below=of w] (b) {$25143$};
        \node[box,below right=of w] (c) {$42153$};
        \node[box,below=2.8cm of w] (y) {$24153$};
        \draw[core] (w)--(a) node[pos=.55,sloped,above]{\scriptsize $1234Q$};
        \draw[core] (w)--(b) node[pos=.55,right]{\scriptsize $1345P$};
        \draw[core] (w)--(c) node[pos=.55,sloped,above]{\scriptsize $2345P$};
        \draw[lower] (a)--(y)
    node[midway,left]{\scriptsize $t_{x_3,x_5}$};
\draw[lower] (b)--(y)
    node[midway,right]{\scriptsize $t_{x_2,x_4}$};
\draw[lower] (c)--(y)
    node[midway,right]{\scriptsize $t_{x_1,x_2}$};
    \end{tikzpicture}
    \caption{The \(45231\) Contact Pentagon, with the same convention as
    Figure~\ref{fig:34512-pentagon}. The solid arrows are precisely the
    common-lower-cover relations used in the proof.}
    \label{fig:45231-pentagon}
\end{figure}

\begin{theorem}[Contact Pentagon Lemma]
\label{thm:contact-pentagon}
Let \(C_1,C_2\) be distinct admissible cores in an lci permutation
\(w\). If
   $ |C_1\cap C_2|=3$,
then there exists \(y\in S_n\) such that
\[
    y\lessdot z(C_1),
    \qquad
    y\lessdot z(C_2).
\]
\end{theorem}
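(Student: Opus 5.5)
The plan is to assemble the Contact Pentagon Lemma from the normal-form analysis and the two pentagon computations carried out just above the statement.

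\emph{Reduction to a normal form.} Since \(|C_1\cap C_2|=3\) and the cores are distinct, the five-point union \(U\) is well defined. By the deletion table \eqref{eq:deletion-calculus}, the shared triple has pattern \(231\), \(312\), or \(321\). Lemma~\ref{lem:shared-321} rules out \(321\) in an lci permutation. Lemma~\ref{lem:inversion-symmetry} lets us replace \(w\) by \(w^{-1}\) when the shared triple is \(231\). Inversion maps admissible cores to admissible cores of the same type, commutes with lowering, and preserves Bruhat order and length. So if \(y'\) is a common lower cover of the lowered inverse cores, then \(y=(y')^{-1}\) is a common lower cover of \(z(C_1)\) and \(z(C_2)\). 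Lemma~\ref{lem:pentagon-normal-forms} then places the union, up to inversion, in one of the two normal forms \(34512\) or \(45231\), with \(C_1,C_2\) being two of the three listed possible cores \(1245Q,1345Q,2345Q\), respectively \(1234Q,1345P,2345P\). The \(P/P\) case yields \(53412\), whose inverse is \(45231\); this uses the same inversion reduction once more.

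\emph{Construction of \(y\).} In each normal form, define \(y\) as above: agree with \(w\) off \(U\), with standardised word \(13425\) or \(24153\) on \(U\). By Lemma~\ref{lem:standardisation-lowering}, each \(z(C_i)\) agrees with \(w\) off \(U\) and has the listed standardised lowering on \(U\). Hence \(y=z(C_i)t\) for the displayed transposition \(t\) in \eqref{eq:34512-pentagon-swaps} or \eqref{eq:45231-pentagon-swaps}. It remains to show that both relevant swaps are covers in \(S_n\), using Lemma~\ref{lem:bruhat-transposition-cover}. The swap is downward because the exchanged values are in decreasing order in \(z(C_i)\). So it suffices that the blocker rectangle in \(z(C_i)\) contains no graph point.

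\emph{Blocker check, the main obstacle.} The delicate step is the ambient emptiness of these rectangles. Union points are handled directly: the one intermediate union position, if any, has a lowered value outside the exchanged value interval. Non-union points are unchanged by lowering and avoid all coordinate lines through \(U\). So each such blocker would lie in an open strip that must be covered by required-empty regions of the \emph{other} admissible core. For each ordered pair among the three possible cores, I would verify this covering explicitly; in the first pentagon, for instance, \((x_3,x_5)\times(v_4,v_5)\subseteq Q_2\cup Q_{A_2}\) of the partner. The key point is that the strips use the partner's emptiness and never the third, possibly non-admissible, core. The \(Q_{A_1}\) and \(Q_{A_2}\) regions are available here because Proposition~\ref{prop:core-dictionary} builds them into admissibility via Lemma~\ref{lem:lci-a-regions}. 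Once every pair is checked, both relations \(y\lessdot z(C_1)\) and \(y\lessdot z(C_2)\) hold, which proves the theorem.
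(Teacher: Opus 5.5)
Your proposal is correct and follows the paper's proof essentially verbatim. You reduce to the \(34512\) or \(45231\) normal form via the deletion table, Lemma~\ref{lem:shared-321}, inversion symmetry and the pentagon normal forms, and then certify the two relevant swaps by placing every ambient blocker in required-empty regions of the partner core. One small imprecision: a strip such as \((x_3,x_5)\times(v_4,v_5)\) lies in \(Q_2\cup Q_{A_2}\) only off the vertical line through \(x_4\); this is harmless because, as you note, non-union points avoid that line.
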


\begin{proof}
The deletion calculus \eqref{eq:deletion-calculus},
Lemma~\ref{lem:shared-321}, Lemma~\ref{lem:pentagon-normal-forms}, and Lemma~\ref{lem:inversion-symmetry} reduce every case to the \(34512\) or \(45231\)
normal form. The preceding two arguments construct the same permutation
\(y\) from either lowering and place every possible ambient blocker in a
named required empty rectangle. Lemma~\ref{lem:bruhat-transposition-cover}
then gives both lower-cover relations.
\end{proof}

Notice that Theorem~\ref{thm:contact-pentagon} does not require the
lowered Schubert varieties to be smooth.

\paragraph*{Two-point Cross Completion.}

It remains to treat \(|C_1\cap C_2|=2\), so the standardised union has six
points. The aim is not to construct a common lower cover directly.
Instead, we construct a third admissible \(Q\)-core sharing three points
with one of the original cores, after which the Contact Pentagon applies.

Set
    $E_{12}=E(C_1)\cup E(C_2)$, 
where \(E(C_i)\) denotes the set of elementary cells contained in the
required empty regions of \(C_i\), as introduced after
\eqref{eq:elementary-cell}.
For a candidate \(Q\)-core \(C\), define
\begin{equation}
    U(C)=E(C)\setminus E_{12},
    \qquad
    H_{\mathrm{res}}(C)=H(C)\setminus E_{12}.
    \label{eq:cross-unresolved-residual}
\end{equation}
If \(U(C)=\varnothing\), every required empty cell of \(C\) is already
forced empty by the admissibility of \(C_1\) and \(C_2\). If, in
addition,
\begin{equation}
    H_{\mathrm{res}}(C)
    \subseteq H(C_1)\cap H(C_2),
    \label{eq:inherited-B-condition}
\end{equation}
and the candidate middle region contains no noncore union point, then
all possible ambient points in its residual middle cells lie in a common
subregion of the two original decreasing chains. The candidate middle
region is therefore reduced.

For each two-point signature with original supports \(I_1,I_2\), choose
a candidate \(Q\)-core on \(K\subseteq\{1,\ldots,6\}\) satisfying
\(\operatorname{Core}(\pi,K,Q)\) and
\(|K\cap I_1|=|K\cap I_2|=3\).
For a support \(K\), write \(U(K)\) and \(H_{\mathrm{res}}(K)\)
for the corresponding sets associated with the candidate core on \(K\).
Ties are resolved in this order:
\begin{enumerate}
    \item minimise \(|U(K)|\);
    \item prefer a candidate whose restricted lowering contains
    \(3412\) or \(4231\);
    \item prefer one satisfying \eqref{eq:inherited-B-condition};
    \item choose the lexicographically smallest support label.
\end{enumerate}
This rule only selects a unique candidate for the finite table.
Ambient admissibility is proved separately by cell inheritance and,
in the two exceptional rows, extremal repair.

\begin{theorem}[Complete two-point classification]
\label{thm:complete-cross-table}
Exactly twenty overlap-two signatures satisfy the finite predicates in
Theorem~\ref{thm:finite-signature-reduction}. For each signature,
Tables~\ref{tab:complete-two-point-cross-structural} and
\ref{tab:complete-two-point-cross-certificates} specify a candidate
\(Q\)-core \(C\) such that
\[
    |C\cap C_1|=|C\cap C_2|=3.
\]
All required cells of \(C\) are inherited from \(C_1\) or \(C_2\), except
for the single cell \(\square_{33}\) in rows~6 and~20. Every residual
middle cell lies in
   $ H(C_1)\cap H(C_2)$,
and no candidate middle region contains a noncore union point.
\end{theorem}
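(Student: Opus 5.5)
This is a finite statement, so the proof is an exhaustive enumeration followed by per-row verification; it involves no ambient reasoning.

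\emph{Step 1: the twenty signatures.} By Theorem~\ref{thm:finite-signature-reduction}, every pair of admissible cores with smooth lowerings and overlap two yields data \((\pi,I_1,I_2,\tau,\sigma)\) with \(\pi\in S_6\). These data satisfy both \(\operatorname{Core}\) predicates, both \(\operatorname{Sm}\) predicates, and \(\operatorname{LCI}(\pi)\). Proposition~\ref{prop:finite-decision} shows that running the search over the \(45\) unordered support pairs, the four type pairs, and all \(720\) permutations is complete. I would run the ancillary program of Appendix~\ref{app:contact-code} and record its output. The claim ``exactly twenty'' is then the count of surviving rows, identified up to the swap of \(C_1\) and \(C_2\) by the unordered-pair convention.

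\emph{Step 2: existence of a candidate.} For each surviving row, I would list every \(K\subseteq[6]\) with \(|K\cap I_1|=|K\cap I_2|=3\). Since \(|I_1\cup I_2|=6\) and \(|I_1\cap I_2|=2\), such a \(K\) contains exactly one of the two shared points together with two unique points from each core, or both shared points together with one unique point from each core. This gives a small finite list. I would test \(\operatorname{Core}(\pi,K,Q)\) on it and apply the tie-breaking rule to select \(C\), which the tables record. The one thing that needs proof here is that every row has at least one \(K\) passing \(\operatorname{Core}(\pi,K,Q)\). This is a finite check and is the real content of the ``exists'' claim.

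\emph{Step 3: cell certificates.} For the chosen \(C\), I would compute the elementary cells \(\square_{ij}\) making up \(E(C)\) and \(H(C)\), using the cell decomposition~\eqref{eq:elementary-cell} of the required rectangles. I would also compute \(E_{12}=E(C_1)\cup E(C_2)\) and \(H(C_1)\cap H(C_2)\). Then I would check, row by row, three conditions. First, \(U(C)=\varnothing\), except that \(U(C)=\{\square_{33}\}\) in rows~6 and~20. Second, \(H_{\mathrm{res}}(C)\subseteq H(C_1)\cap H(C_2)\). Third, no union point outside \(K\) lies in \(B(C)\); this part is already contained in \(\operatorname{Core}(\pi,K,Q)\), which tests union points against the middle region. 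Each of these is a mechanical set comparison, and Table~\ref{tab:complete-two-point-cross-certificates} records the witnesses. For each inherited cell, the table would name the original core and region containing it, so a reader can verify the table by hand without rerunning the code.

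\emph{Main obstacle.} I expect the hardest part to be the cell-containment step for rows~6 and~20. There, minimising \(|U(K)|\) still leaves the cell \(\square_{33}\) uncovered, and I must confirm that no other admissible candidate does better. I would first check whether any other \(K\) achieves \(U(K)=\varnothing\), and record why none does in those rows. The theorem asserts only the finite statement, so the ambient emptiness of \(\square_{33}\) is left to the later extremal repairs.

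A secondary risk is getting the cell decomposition wrong. A named rectangle's boundaries are union coordinates, so it is exactly a union of elementary cells. Still, when cores overlap, the indexing must use the six standardised coordinates and not the four core coordinates. I would guard against this by checking the program's cell lists against one hand-computed row.
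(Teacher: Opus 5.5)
Your route is the paper's: its proof runs the exhaustive search of Proposition~\ref{prop:finite-decision} through the ancillary program, then reads the candidate, \(U(C)\) and \(H_{\mathrm{res}}(C)\) off the standardised word and the elementary-cell decomposition, with the containments recorded in Appendix~\ref{app:finite-certificates}.

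One step of your Step~3 is wrongly justified. You say the clause ``no candidate middle region contains a noncore union point'' is already part of \(\operatorname{Core}(\pi,K,Q)\). It is not. \(\operatorname{Core}\) only requires that the union points lying in \(B(C)\) decrease from left to right. A single noncore union point in \(B(C)\) passes that test vacuously, and two decreasing ones pass as well. The paper's verifier checks this clause separately (Appendix~\ref{app:contact-code}), and the clause matters. Since \(K\) omits exactly one unique point of each original core, each noncore union point is a corner of \(C_1\) or \(C_2\). It is therefore not among the points whose order is controlled by the original decreasing chains. If such a point sat in \(B(C)\), its order relative to ambient points in the residual middle cells would be unconstrained. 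The reducedness step of Lemma~\ref{lem:rectangle-inheritance} would then fail. You need an explicit extra check in each row. For \(C=(a,b,c,d)\), it suffices to verify that no union position strictly between \(b\) and \(c\) carries a value in \((w_d,w_a)\).

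Two smaller points. First, your description of the candidate supports in Step~2 is off. With \(|I_1\cap I_2|=2\) and \(|K|=4\), the conditions \(|K\cap I_1|=|K\cap I_2|=3\) force \(K\) to contain both shared points and exactly one unique point of each core. That gives exactly four supports per row; ``one shared point plus two unique points of each core'' would have five points. Second, the ``main obstacle'' you identify is not part of the statement. The theorem asserts properties of the selected candidate only, and minimality of \(|U(K)|\) is built into the selection rule. So you need to prove nothing about other candidates in rows~6 and~20.
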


\begin{proof}
Proposition~\ref{prop:finite-decision} enumerates all \(45\) unordered
pairs of position sets, all four type pairs, and all permutations in
\(S_6\). Testing the three finite predicates gives exactly twenty
signatures. The ancillary program and its recorded output are described
in Appendix~\ref{app:contact-code}.

The candidate order type, unresolved cells, and residual middle cells are
read directly from the standardised word and the elementary-cell
decomposition. The complete data are listed in
Tables~\ref{tab:complete-two-point-cross-structural} and
\ref{tab:complete-two-point-cross-certificates}, while the
rectangle-by-rectangle inclusions are recorded in
Appendix~\ref{app:finite-certificates}.
\end{proof}

\begin{lemma}[Rectangle inheritance]
\label{lem:rectangle-inheritance}
For every row of Theorem~\ref{thm:complete-cross-table} except rows~6 and
20, the selected candidate \(C\) is an admissible \(Q\)-core.
\end{lemma}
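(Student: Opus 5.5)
To prove that the candidate \(C\) is an admissible \(Q\)-core in the ambient permutation \(w\), we check the three ingredients of Definition~\ref{def:q-core} directly in \(w\). These are the order type \(3412\) on the four selected points, emptiness of \(Q_1,\dots,Q_4,Q_{A_1},Q_{A_2}\), and the decreasing condition in \(B(C)\). The order type is already certified by \(\operatorname{Core}(\pi,K,Q)\) in Theorem~\ref{thm:complete-cross-table}, since the standardised word \(\pi\) records the relative order of the four union points on \(K\) exactly.

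For emptiness, every graph point of \(w\) is either a union point or a non-union point. Union points are handled by the finite predicate \(\operatorname{Core}(\pi,K,Q)\): no noncore union point lies in a required empty region of \(C\). A non-union point \((p,w_p)\) lying in a required region of \(C\) avoids every selected coordinate line, because positions and values of \(w\) are distinct. Since each required region of \(C\) is an open rectangle bounded by selected coordinates, such a point lies in a unique elementary cell \(\square_{ij}\in E(C)\). For every row other than rows~6 and~20, Theorem~\ref{thm:complete-cross-table} gives \(U(C)=\varnothing\), so \(\square_{ij}\in E_{12}=E(C_1)\cup E(C_2)\). The cell is therefore contained in a required empty region of \(C_1\) or \(C_2\). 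By ambient admissibility of that core it contains no point of \(G(w)\), which is a contradiction. The only point needing care here is that the elementary cells for \(C_i\) are defined using the full six-point union grid, so containment of a cell in a required region of \(C_i\) is a genuine geometric inclusion of open rectangles. This is exactly the recorded rectangle-by-rectangle data in Appendix~\ref{app:finite-certificates}.

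For reducedness, let \((p,w_p),(q,w_q)\in G(w)\cap B(C)\) with \(p<q\). By Theorem~\ref{thm:complete-cross-table}, no noncore union point lies in \(B(C)\), so both points are non-union points lying in cells of \(H(C)\). A cell of \(H(C)\) lying in \(E_{12}\) is empty, as in the previous paragraph. Hence both points lie in cells of \(H_{\mathrm{res}}(C)\subseteq H(C_1)\cap H(C_2)\), and in particular both lie in \(B(C_1)\). The reducedness of the admissible core \(C_1\) then gives \(w_p>w_q\). This also covers the case where the two points lie in different residual cells, because the inclusion \(H_{\mathrm{res}}(C)\subseteq H(C_1)\) is applied to each point separately and the chain condition of \(C_1\) is global on \(B(C_1)\).

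The main obstacle is not conceptual but certificatory. The argument depends on the table entries \(U(C)=\varnothing\) and \(H_{\mathrm{res}}(C)\subseteq H(C_1)\cap H(C_2)\) being correct inclusions of elementary cells for all eighteen rows. I would verify these by rereading them from Appendix~\ref{app:finite-certificates}, spot-checking a few rows by hand, and otherwise relying on the exhaustive output described in Appendix~\ref{app:contact-code}.
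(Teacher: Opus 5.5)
Your proof is correct and follows essentially the same route as the paper's. The order type comes from \(\operatorname{Core}(\pi,K,Q)\), and the required regions are empty because each non-union point lies in a unique elementary cell and \(U(C)=\varnothing\). Reducedness of \(B(C)\) follows from the absence of noncore union points together with \(H_{\mathrm{res}}(C)\subseteq H(C_1)\cap H(C_2)\) and the decreasing chain of an original core; you merely make explicit that middle cells in \(E_{12}\) are empty, and you use \(B(C_1)\) alone where the paper uses \(B(C_1)\cap B(C_2)\).
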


\begin{proof}
The candidate has order type \(3412\). By the local
\(\operatorname{Core}\) predicate, no noncore union point lies in a
required region. Any graph point in a required or middle region of
\(C\) that is outside the selected union avoids all union position
and value lines, and therefore lies in a unique elementary cell.

Since \(U(C)=\varnothing\), every elementary cell contained in a required
region of \(C\) is contained in a required empty region of \(C_1\) or
\(C_2\). Hence every required region of \(C\) contains no graph point.

The candidate middle region contains no noncore union point. Any other
graph point in that region lies in a residual middle cell belonging to
\(H(C_1)\cap H(C_2)\), and hence lies in \(B(C_1)\cap B(C_2)\).
The original middle chains decrease from left to right. Therefore the
candidate middle region is reduced, and \(C\) is admissible.
\end{proof}

The two exceptional rows each have one potentially nonempty required
cell. An extremal point in that cell replaces one critical point and
produces a new admissible core in a single step.

\begin{lemma}[Extremal repair of row~6]
\label{lem:repair-row-6}
Consider row~6:
\[
    \pi=561234,
    \qquad
    C_1=1236Q,
    \qquad
    C_2=1245Q,
    \qquad
    C=1234Q.
\]
Here the only required cell of the candidate \(C\) whose emptiness is
not inherited from \(C_1\) or \(C_2\) is
    $\square_{33}
    =(x_3,x_4)\times(v_3,v_4)$.
If this cell contains a point of \(G(w)\), then there exists an admissible \(Q\)-core
\(C'\) such that
   $ |C'\cap C_1|=3$.
\end{lemma}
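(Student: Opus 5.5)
The plan is to take an extremal graph point in \(\square_{33}\) as the new fourth critical point, keeping the three critical points \(x_1,x_2,x_3\) that \(C_1\) and \(C\) share. First I fix the data. Since \(\pi=561234\), the union points are
\[
(x_1,v_5),\ (x_2,v_6),\ (x_3,v_1),\ (x_4,v_2),\ (x_5,v_3),\ (x_6,v_4).
\]
The cores are
\[
C_1=(x_1,x_2,x_3,x_6),\qquad C_2=(x_1,x_2,x_4,x_5),\qquad C=(x_1,x_2,x_3,x_4).
\]
The cell \(\square_{33}=(x_3,x_4)\times(v_3,v_4)\) lies in the middle region \(B(C_2)=\mathcal R(x_2,x_4;v_3,v_5)\). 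So its graph points are not forced to vanish; they only form part of the decreasing chain of \(C_2\). That same reducedness supplies the repair.

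Suppose \(G(w)\cap\square_{33}\neq\varnothing\). Let \(p\) be the leftmost position with \((p,w_p)\in\square_{33}\); since the chain decreases, this point also has the largest value in the cell. Set \(C'=(x_1,x_2,x_3,p)\). Its values are \(v_5,v_6,v_1,w_p\) with \(v_1<w_p<v_5<v_6\), so \(C'\) has order type \(3412\) and \(|C'\cap C_1|=3\). It remains to prove that \(C'\) is admissible, region by region.

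\begin{itemize}
\item \(Q_1(C')\), \(Q_2(C')\), \(Q_4(C')\): these are \(\mathcal R(x_1,x_2;w_p,v_5)\), \(\mathcal R(x_2,x_3;v_5,v_6)\) and \(\mathcal R(x_2,x_3;v_1,w_p)\). Since \(C_1\) has the same first three critical points and \(v_4<w_p\), they are contained in the corresponding required regions \(Q_1(C_1)\), \(Q_2(C_1)\) and \(Q_4(C_1)\cup B(C_1)\) of \(C_1\), or in already-certified cells of \(C_2\). I expect the apparent overlap with \(B(C_1)\) to be dispatched by cell inheritance in the table's certificates.
\item \(Q_{A_1}(C')\), \(Q_{A_2}(C')\): these are empty by Lemma~\ref{lem:lci-a-regions}, since \(C'\) is a \(3412\)-occurrence.
\item \(Q_3(C')=\mathcal R(x_3,p;w_p,v_5)\): this is the genuinely new region and the expected main obstacle. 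Split it along the lines \(v_4\) and \(x_4\). The band with values in \((v_4,v_5)\) lies in \(Q_3(C_1)=\mathcal R(x_3,x_6;v_4,v_5)\), which is empty. The band with values in \((w_p,v_4)\) lies inside \(\square_{33}\) to the left of \(p\), and is empty by the leftmost choice of \(p\). The value line \(v_4\) is occupied only at \(x_6\), which lies outside \((x_3,p)\).
\item \(B(C')=\mathcal R(x_2,x_3;w_p,v_5)\): this is contained in \(B(C_2)\) because \(v_3<w_p\). Hence its graph points form a subchain of the decreasing chain of \(C_2\), and reducedness holds.
\end{itemize}

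Once each region is checked, Definition~\ref{def:q-core} shows \(C'\) is admissible, which proves the lemma. Together with Theorem~\ref{thm:contact-pentagon}, this gives the required common lower cover for row~6. The step I would verify most carefully is the \(Q_3(C')\) decomposition, in particular that no noncore union point lies in \((x_3,p)\) with value in \((w_p,v_5)\). This holds because the only union position in that range would be \(x_4\), and \(p<x_4\).
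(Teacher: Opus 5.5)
Your construction is the paper's: the leftmost graph point of \(\square_{33}\) replaces \((x_4,v_2)\), giving \(C'=(x_1,x_2,x_3,p)\). Your \(Q_3(C')\) argument is exactly the paper's key step: the band above \(v_4\) lies in \(Q_3(C_1)\), the band below \(v_4\) lies in \(\square_{33}\) to the left of \(p\), and the line of value \(v_4\) is occupied only at \(x_6\). Your two shortcuts are also valid. For \(Q_{A_1}(C'),Q_{A_2}(C')\) you use Lemma~\ref{lem:lci-a-regions}; for reducedness you use \(B(C')=(x_2,x_3)\times(w_p,v_5)\subseteq B(C_2)\). The paper instead inherits \(Q_{A_i}(C')\) from \(C_1\) and splits \(B(C')\) at \(v_4\) between \(Q_4(C_1)\) and \(B(C_1)\).

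The first bullet, however, is wrong as written. A point of \(\square_{33}\) has \(v_3<w_p<v_4\), not \(v_4<w_p\); you use the correct inequality yourself in the \(Q_3\) bullet. Consequently \(Q_1(C')=(x_1,x_2)\times(w_p,v_5)\) is not contained in \(Q_1(C_1)=(x_1,x_2)\times(v_4,v_5)\). It is contained in \(Q_1(C_2)=(x_1,x_2)\times(v_3,v_5)\), or alternatively in \(Q_{A_1}(C_1)\cup Q_1(C_1)\) together with the line of value \(v_4\), which is occupied only at \(x_6\). Also \(Q_4(C')=(x_2,x_3)\times(v_1,w_p)\) lies inside \(Q_4(C_1)=(x_2,x_3)\times(v_1,v_4)\) outright, so there is no overlap with \(B(C_1)\) at all.

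Your fallback, that such an overlap would be dispatched by the table's cell inheritance, would not have worked. The strip \((x_2,x_3)\times(v_4,v_5)=\square_{24}\) lies in \(B(C_1)\cap B(C_2)\) and is precisely the residual middle cell \(H_{\mathrm{res}}(C)\) of row~6. It is required empty for neither original core, so a point there would have broken admissibility of \(C'\) had \(w_p\) exceeded \(v_4\). With the inequality corrected, every region is covered and your proof is complete.
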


\begin{proof}
Write the six union points as
\[
    (x_1,v_5),\ (x_2,v_6),\ (x_3,v_1),\
    (x_4,v_2),\ (x_5,v_3),\ (x_6,v_4),
\]
where \(x_1<\cdots<x_6\) and \(v_1<\cdots<v_6\). 
If this cell $\square_{33}$ contains no graph point, then \(C\) is already admissible.
Otherwise choose a graph point with minimal horizontal coordinate,
$\xi=(\rho,\tau)$,
   $ x_3<\rho<x_4$,
    $v_3<\tau<v_4$,
and define
    $C'
    =
    \{(x_1,v_5),(x_2,v_6),(x_3,v_1),(\rho,\tau)\}$.
Since
    $v_1<\tau<v_5<v_6$,
these four points have order type \(3412\).

Four required regions are inherited directly:
\begin{align*}
    Q_2(C')
        &=(x_2,x_3)\times(v_5,v_6)=Q_2(C_1),\\
    Q_4(C')
        &=(x_2,x_3)\times(v_1,\tau)\subseteq Q_4(C_1),\\
    Q_{A_1}(C')
        &=(x_1,x_2)\times(v_1,\tau)\subseteq Q_{A_1}(C_1),\\
    Q_{A_2}(C')
        &=(x_3,\rho)\times(v_5,v_6)\subseteq Q_{A_2}(C_1).
\end{align*}

We next prove that \(Q_1(C')\) is empty. Let
\((p,w_p)\in G(w)\cap Q_1(C')\). If \(w_p<v_4\), then
\((p,w_p)\in Q_{A_1}(C_1)\); if \(w_p>v_4\), then
\((p,w_p)\in Q_1(C_1)\). The remaining equality \(w_p=v_4\) is impossible,
because the unique graph point of value \(v_4\) is \((x_6,v_4)\), whereas
\(Q_1(C')\) lies over \((x_1,x_2)\). Hence \(Q_1(C')\) is empty.

Now let \((p,w_p)\in G(w)\cap Q_3(C')\). If \(w_p>v_4\), then
    $(p,w_p)\in Q_3(C_1)$.
If \(w_p<v_4\), then \((p,w_p)\) lies in the unresolved cell strictly to
the left of \(\xi\), contradicting the leftmost choice of \(\xi\).
The equality \(w_p=v_4\) is again impossible, since the unique point of
value \(v_4\) has position \(x_6>\rho\). Thus \(Q_3(C')\) is empty.

Finally,
   $ B(C')=(x_2,x_3)\times(\tau,v_5)$.
A graph point in this region with value below \(v_4\) would lie in the
empty region \(Q_4(C_1)\). Every remaining graph point lies in
\(B(C_1)\), and these points decrease from left to right. The value
\(v_4\) occurs only at position \(x_6\), outside the horizontal interval.
Therefore \(B(C')\) is reduced. Hence \(C'\) is an admissible \(Q\)-core
sharing three points with \(C_1\).
\end{proof}

\begin{lemma}[Extremal repair of row~20]
\label{lem:repair-row-20}
Consider row~20:
\[
    \pi=345612,
    \qquad
    C_1=1456Q,
    \qquad
    C_2=2356Q,
    \qquad
    C=1256Q.
\]
If \(G(w)\cap\square_{33}\neq\varnothing\), then there is an
admissible \(Q\)-core \(C'\) sharing three points with \(C_1\).
\end{lemma}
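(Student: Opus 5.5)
The plan mirrors the proof of Lemma~\ref{lem:repair-row-6}. I would replace the role-$2$ point of $C_1$ by an extremal point of the unresolved cell, and then show that every required region of the new core lies inside a required region of $C_1$ or $C_2$.

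\emph{Setup.} Write the six union points as
\[
(x_1,v_3),\ (x_2,v_4),\ (x_3,v_5),\ (x_4,v_6),\ (x_5,v_1),\ (x_6,v_2),
\]
with $x_1<\cdots<x_6$ and $v_1<\cdots<v_6$.
\begin{itemize}
\item $C_1=1456Q$ has critical values $(v_3,v_6,v_1,v_2)$.
\item $C_2=2356Q$ has critical values $(v_4,v_5,v_1,v_2)$.
\item The candidate $C=1256Q$ has values $(v_3,v_4,v_1,v_2)$.
\end{itemize}
The uninherited cell $\square_{33}=(x_3,x_4)\times(v_3,v_4)$ is the part of $Q_2(C)$ not covered by $E_{12}$. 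If it is empty, $C$ is already admissible. Otherwise I choose the graph point $\xi=(\rho,\tau)$ in $\square_{33}$ of \emph{minimal value}. I then set
\[
C'=\{(x_1,v_3),(\rho,\tau),(x_5,v_1),(x_6,v_2)\}.
\]
Since $v_1<v_2<v_3<\tau$ and $x_1<\rho<x_5<x_6$, these points have order type $3412$, and $C'$ shares the three points at $x_1,x_5,x_6$ with $C_1$.

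\emph{Inherited regions.} Because $x_3<\rho<x_4$, I expect the following direct inclusions:
\begin{align*}
Q_1(C')&=(x_1,\rho)\times(v_2,v_3)\subseteq Q_1(C_1),\\
Q_3(C')&=(x_5,x_6)\times(v_2,v_3)=Q_3(C_1),\\
Q_{A_1}(C')&=(x_1,\rho)\times(v_1,v_2)\subseteq Q_{A_1}(C_1),\\
Q_{A_2}(C')&=(x_5,x_6)\times(v_3,\tau)\subseteq Q_{A_2}(C_1),\\
Q_4(C')&=(\rho,x_5)\times(v_1,v_2)\subseteq Q_4(C_2).
\end{align*}
For the last one, recall $Q_4(C_2)=(x_3,x_5)\times(v_1,v_2)$. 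For the middle region, $B(C')=(\rho,x_5)\times(v_2,v_3)$ lies in $B(C_2)=(x_3,x_5)\times(v_2,v_4)$. The graph points of $B(C_2)$ decrease from left to right, so $B(C')$ is reduced.

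\emph{The nontrivial region and the main obstacle.} The one step needing the extremal choice is $Q_2(C')=(\rho,x_5)\times(v_3,\tau)$. I split it along the line $x=x_4$.
\begin{itemize}
\item The line itself carries only the point $(x_4,v_6)$, whose value is outside the strip.
\item The part over $(x_4,x_5)$ lies in $Q_2(C_1)=(x_4,x_5)\times(v_3,v_6)$, which is empty.
\item The part over $(\rho,x_4)$ lies in $\square_{33}$ at values below $\tau$. Such a point is excluded by the minimality of $\tau$.
\end{itemize}

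The main thing to check carefully is that no union point or selected coordinate line spoils these inclusions. The union values $v_4,v_5,v_6$ occur only at positions $x_2,x_3,x_4$, all outside the relevant open strips. The lines $x=x_2,x_3,x_4$ carry values outside the relevant value intervals. This check is analogous to the equality cases $w_p=v_4$ handled in row~6. Once it is done, $C'$ is an admissible $Q$-core with $|C'\cap C_1|=3$, as the statement requires.
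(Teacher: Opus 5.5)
Your proof is correct and follows the paper's argument: the same core $C'$, the same inclusions of $Q_1,Q_3,Q_4,Q_{A_1},Q_{A_2}$ into required regions of $C_1$ and $C_2$, and the same split of $Q_2(C')$ along $x=x_4$. The differences are minor. The paper chooses $\xi$ rightmost in $\square_{33}$, while your lowest-value choice works equally well, since all that is needed is that no point of $\square_{33}$ lies strictly to the lower right of $\xi$. Your containment $B(C')\subseteq B(C_2)$ is also a slightly quicker route to reducedness than the paper's split of $B(C')$ into parts lying in $Q_1(C_1)$ and $B(C_1)$.
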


\begin{proof}
Write the union points as
\[
    (x_1,v_3),\ (x_2,v_4),\ (x_3,v_5),\
    (x_4,v_6),\ (x_5,v_1),\ (x_6,v_2).
\]

If cell $\square_{33}$ contains no graph point, then \(C\) is admissible. Otherwise
choose a graph point with maximal horizontal coordinate,
   $ \xi=(\rho,\tau)$,
    $x_3<\rho<x_4$,
    $v_3<\tau<v_4$,
and define
    $C'
    =
    \{(x_1,v_3),(\rho,\tau),(x_5,v_1),(x_6,v_2)\}$.
Since
    $v_1<v_2<v_3<\tau,$
these four points have order type \(3412\).

Five required regions are inherited directly:
\begin{align*}
    Q_1(C')&\subseteq Q_1(C_1),
    &Q_3(C')&=Q_3(C_1),\\
    Q_4(C')&\subseteq Q_4(C_2),
    &Q_{A_1}(C')&\subseteq Q_{A_1}(C_1),\\
    Q_{A_2}(C')&\subseteq Q_{A_2}(C_1).
\end{align*}

It remains to prove that \(Q_2(C')\) is empty. Let
    $(p,w_p)\in G(w)\cap Q_2(C')$.
If \(p<x_4\), then \((p,w_p)\) lies in the unresolved cell strictly to
the right of \(\xi\), contradicting the rightmost choice of \(\xi\).
If \(p>x_4\), then
    $(p,w_p)\in Q_2(C_1)$.
The remaining case \(p=x_4\) is impossible because the union point at
that position has value \(v_6\), which lies outside the vertical interval
\((v_3,\tau)\). Thus \(Q_2(C')\) is empty.

Finally,
    $B(C')=(\rho,x_5)\times(v_2,v_3)$.
Any graph point in this region with position less than \(x_4\) lies in the
empty region \(Q_1(C_1)\). Any graph point with position greater than \(x_4\)
lies in the reduced middle region \(B(C_1)\). The point at position
\(x_4\) has value \(v_6\), outside the vertical interval. Therefore
\(B(C')\) is reduced, and \(C'\) is an admissible \(Q\)-core sharing
three points with \(C_1\).
\end{proof}

\begin{theorem}[Cross Completion]
\label{thm:cross-completion}
Let \(C_1,C_2\) be distinct admissible cores in an lci permutation
\(w\). Assume that \(X_{z(C_1)}\) and \(X_{z(C_2)}\) are smooth and
that
    $|C_1\cap C_2|=2$.
Then there exists an admissible \(Q\)-core \(C_3\) and
\(i\in\{1,2\}\) such that
    $|C_3\cap C_i|=3$.
\end{theorem}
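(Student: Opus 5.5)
The plan is to assemble Cross Completion from the finite classification and the three ambient lemmas already proved. Start with distinct admissible cores \(C_1,C_2\) in the lci permutation \(w\), with smooth lowerings and \(|C_1\cap C_2|=2\). Standardise their six-point union to obtain \((\pi,I_1,I_2,\tau,\sigma)\). By Theorem~\ref{thm:finite-signature-reduction} these data satisfy \(\operatorname{Core}(\pi,I_1,\tau)\), \(\operatorname{Core}(\pi,I_2,\sigma)\), both \(\operatorname{Sm}\) predicates, and \(\operatorname{LCI}(\pi)\). Proposition~\ref{prop:finite-decision} and Theorem~\ref{thm:complete-cross-table} then place the signature among the twenty listed rows. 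Here one must record that the labelling of \(C_1,C_2\) is unordered: if the actual pair matches a row only after exchanging the roles of the two cores, the conclusion is symmetric in \(i\in\{1,2\}\), so nothing is lost.

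Next, take the candidate \(Q\)-core \(C\) on the support \(K\) specified by that row. By Theorem~\ref{thm:complete-cross-table}, \(C\) satisfies \(|C\cap C_1|=|C\cap C_2|=3\). The argument now splits according to the row.
\begin{enumerate}
\item If the row is neither row~6 nor row~20, Lemma~\ref{lem:rectangle-inheritance} shows that \(C\) is an admissible \(Q\)-core in the ambient \(w\). Setting \(C_3=C\), with either \(i\), completes this case.
\item If the row is row~6 or row~20, consider the unresolved cell \(\square_{33}\).
\begin{enumerate}
\item If \(\square_{33}\) contains no graph point, then every required region of \(C\) is empty. For row~6, the remaining required regions and the middle region are handled exactly as in the inheritance argument, and both extremal-repair lemmas note explicitly that \(C\) is then admissible. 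So again \(C_3=C\).
\item If \(\square_{33}\) contains a graph point, Lemma~\ref{lem:repair-row-6} or Lemma~\ref{lem:repair-row-20} produces an admissible \(Q\)-core \(C'\) with \(|C'\cap C_1|=3\). Take \(C_3=C'\) and \(i=1\).
\end{enumerate}
\end{enumerate}

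Two minor points need checking. First, \(C_3\) should be a genuinely different core from \(C_i\). This is automatic because \(|C_3\cap C_i|=3<4\), so the subsequent appeal to Theorem~\ref{thm:contact-pentagon} for the pair \((C_3,C_i)\) is legitimate. Second, the identification of the abstract row with the actual cores must respect the supports. The elementary cells \(\square_{ij}\) are defined relative to the actual union positions \(x_1<\dots<x_6\) and values \(v_1<\dots<v_6\). Therefore the cell-inclusion statements in the tables and in Appendix~\ref{app:finite-certificates} translate verbatim into inclusions of ambient open rectangles, as in the proofs of the two repair lemmas.

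The main obstacle is not in this assembly but in its inputs. The substantive content is the completeness of the twenty-row list and the correctness of the inherited-cell certificates, meaning \(U(C)=\varnothing\) and \(H_{\mathrm{res}}(C)\subseteq H(C_1)\cap H(C_2)\) in every non-exceptional row. These rest on the computer-assisted enumeration of Theorem~\ref{thm:complete-cross-table}. The proof should state clearly that it relies on this finite verification together with the ambient lemmas. It should also state that no graph point outside the union escapes consideration, since every such point lies in a unique elementary cell.
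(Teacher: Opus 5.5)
Your proposal is correct and is essentially the paper's own proof. You reduce via Theorem~\ref{thm:finite-signature-reduction} to one of the twenty rows, apply Lemma~\ref{lem:rectangle-inheritance} outside rows~6 and~20, and in those two rows either take the candidate directly when \(\square_{33}\) is empty or invoke Lemma~\ref{lem:repair-row-6} or Lemma~\ref{lem:repair-row-20}; your remark that exchanging the roles of \(C_1,C_2\) is harmless, because the conclusion is existential in \(i\), makes explicit a point the paper leaves implicit.
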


\begin{proof}
By Theorem~\ref{thm:finite-signature-reduction}, the standardised
signature is one of the twenty rows in
Theorem~\ref{thm:complete-cross-table}. Outside rows~6 and~20,
Lemma~\ref{lem:rectangle-inheritance} gives the required admissible
candidate. In rows~6 and~20, the preliminary candidate is admissible if
its unresolved cell is empty; if it is nonempty, apply respectively
Lemma~\ref{lem:repair-row-6} or Lemma~\ref{lem:repair-row-20}. In every case the selected or repaired core has intersection of
cardinality exactly three with at least one original core.
\end{proof}

\subsection{Contact Rigidity and sharpness}
\label{sec:contact-proof}

The preceding constructions now give the main geometric result. The
examples that follow distinguish its existential conclusion from a
pairwise statement and explain the role of its hypotheses.

\begin{theorem}[Contact Rigidity]
\label{thm:contact-rigidity-proof}
Let \(w\in S_n\), and assume that \(X_w\) is lci. Suppose that there exist
distinct
    $z_1,z_2\in\operatorname{maxsing}(w)$
such that \(X_{z_1}\) and \(X_{z_2}\) are smooth. Then there exist distinct
\(z,z'\in\operatorname{maxsing}(w)\) and \(y\in S_n\) such that
   $ y\lessdot z,
    y\lessdot z'$.
Consequently,
\[
    X_y\subseteq X_z\cap X_{z'},
    \qquad
    \operatorname{codim}_{X_z}X_y
    =
    \operatorname{codim}_{X_{z'}}X_y
    =1.
\]
\end{theorem}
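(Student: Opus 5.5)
The plan is to assemble the machinery of Sections~\ref{sec:cores-overlap} and~\ref{sec:pentagon-cross}. By Proposition~\ref{prop:core-dictionary}, the prescribed indices satisfy \(z_1=z(C_1)\) and \(z_2=z(C_2)\) for unique admissible cores \(C_1,C_2\). These cores are distinct because \(z_1\neq z_2\). Since \(X_{z_1}\) and \(X_{z_2}\) are smooth, Lemma~\ref{lem:minimum-overlap} gives \(|C_1\cap C_2|\geq2\). Uniqueness in Proposition~\ref{prop:core-dictionary} rules out \(|C_1\cap C_2|=4\), since two cores with the same position set coincide. This leaves exactly two cases, \(|C_1\cap C_2|=3\) and \(|C_1\cap C_2|=2\).

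If \(|C_1\cap C_2|=3\), apply Theorem~\ref{thm:contact-pentagon} directly to obtain \(y\) with \(y\lessdot z(C_1)\) and \(y\lessdot z(C_2)\), and take \(z=z_1\), \(z'=z_2\).

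If \(|C_1\cap C_2|=2\), Theorem~\ref{thm:cross-completion} uses the smoothness hypothesis to produce an admissible \(Q\)-core \(C_3\) and an index \(i\in\{1,2\}\) with \(|C_3\cap C_i|=3\). Then \(C_3\neq C_i\), since they share only three points. Theorem~\ref{thm:contact-pentagon} does not require smoothness of the lowered varieties, so it applies to the pair \(C_i,C_3\). It yields \(y\) with \(y\lessdot z(C_i)\) and \(y\lessdot z(C_3)\).

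By part~(2) of Proposition~\ref{prop:core-dictionary}, \(z(C_3)\in\operatorname{maxsing}(w)\). By the uniqueness part, \(z(C_3)\neq z(C_i)\), because the cores are distinct. Set \(z=z(C_i)\) and \(z'=z(C_3)\). This explains why the conclusion is existential: the second component may be the new one, which need not be smooth.

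The geometric consequence follows from \eqref{eq:cover-codimension-one}. A relation \(y\lessdot z\) gives \(X_y\subseteq X_z\) with codimension one, and likewise for \(z'\). Hence \(X_y\subseteq X_z\cap X_{z'}\).

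No step here is a genuine obstacle, since all the difficulty sits in the cited results. The point needing care is the distinctness bookkeeping. The new core \(C_3\) from Cross Completion must give an index distinct from \(z(C_i)\), which follows from the injectivity of \(C\mapsto z(C)\). The hypotheses of the Contact Pentagon must also be met without smoothness of \(X_{z(C_3)}\), which the remark after Theorem~\ref{thm:contact-pentagon} guarantees.
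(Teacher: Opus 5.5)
Your proposal is correct and follows the paper's own proof essentially step for step. You use the core dictionary, the Minimum-overlap Lemma, and uniqueness to exclude four-point overlap; the Contact Pentagon handles three-point overlap, and Cross Completion followed by the Pentagon (which needs no smoothness) handles two-point overlap, with distinctness of the two indices coming from injectivity of \(C\mapsto z(C)\).
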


\begin{proof}
By Proposition~\ref{prop:core-dictionary}, there are unique admissible
cores \(C_1,C_2\) such that
    $z(C_i)=z_i
 (i=1,2)$.
Since \(X_{z_1}\) and \(X_{z_2}\) are smooth,
Lemma~\ref{lem:minimum-overlap} gives
    $|C_1\cap C_2|\geq2$.
The cores cannot share all four points, since uniqueness in
Proposition~\ref{prop:core-dictionary} would then give \(C_1=C_2\).
Hence their overlap has size two or three.

If \(|C_1\cap C_2|=3\), Theorem~\ref{thm:contact-pentagon} gives a common
lower cover \(y\) of \(z(C_1)\) and \(z(C_2)\). In this case set
\(z=z(C_1)\) and \(z'=z(C_2)\).

Suppose instead that \(|C_1\cap C_2|=2\). By
Theorem~\ref{thm:cross-completion}, there is an admissible \(Q\)-core
\(C_3\) and \(i\in\{1,2\}\) such that
    $|C_3\cap C_i|=3$.
Proposition~\ref{prop:core-dictionary} gives
    $z(C_3)\in\operatorname{maxsing}(w)$.
The Contact Pentagon Lemma applies directly to \(C_3\) and \(C_i\);
it does not require their lowerings to be smooth. Hence there exists
\(y\in S_n\) such that
    $y\lessdot z(C_3)$,
    $y\lessdot z(C_i)$.
The two maximal singular indices are distinct because their unique cores
share three, rather than four, points. In this case set \(z=z(C_3)\)
and \(z'=z(C_i)\).

Finally, \eqref{eq:cover-codimension-one} gives
\[
    X_y\subseteq X_z\cap X_{z'},
    \qquad
    \operatorname{codim}_{X_z}X_y
    =
    \operatorname{codim}_{X_{z'}}X_y
    =1.
\]
\end{proof}

\paragraph*{Why the theorem is not pairwise.}

\begin{example}[Failure of the pairwise strengthening]
\label{ex:false-pairwise-contact}
Contact Rigidity does not assert that the pair it supplies consists of
smooth components. Nor does every prescribed pair have a common lower
cover, even when all maximal singular components are smooth.

Let
    $w=451623$.
A direct check shows that \(w\) avoids the six forbidden patterns in
Theorem~\ref{thm:lci-pattern-criterion}, so \(X_w\) is lci. The
admissible cores of \(w\) are exactly
\[
\begin{array}{c|ccccc}
C
    &1235Q&1236Q&1256Q&1456Q&2456Q\\
\hline
z(C)
    &142653&143625&241635&251436&421536 .
\end{array}
\]
Hence Proposition~\ref{prop:core-dictionary} gives
    $\operatorname{maxsing}(w)
    =
    \{142653,143625,241635,251436,421536\}$.
Each of these five permutations avoids \(3412\) and \(4231\), so
Theorem~\ref{thm:lakshmibai-sandhya} shows that all five corresponding
Schubert varieties are smooth.

Consider, however, the two maximal singular indices
    $z=142653$,
    $z'=251436$.
Using Lemma~\ref{lem:bruhat-transposition-cover}, their complete sets of
lower covers are
\begin{align*}
    \{y\in S_6:y\lessdot142653\}
        &=
        \{124653,132654,142563,142635\},\\
    \{y\in S_6:y\lessdot251436\}
        &=
        \{152436,215436,241536,251346\}.
\end{align*}
These sets are disjoint. Thus this prescribed pair has no common lower
cover.

The global conclusion of Contact Rigidity nevertheless holds. The
admissible cross core \(1256Q\) has lowering \(241635\), and
\[
    142635\lessdot142653,
    \qquad
    142635\lessdot241635,
\]
while
\[
    241536\lessdot251436,
    \qquad
    241536\lessdot241635.
\]
Thus the third maximal singular index \(241635\) has a common
lower cover with each prescribed index, although the prescribed
pair has none.

\end{example}

\paragraph*{Sharpness of the hypotheses.}

\begin{example}[The lci hypothesis cannot be removed]
\label{ex:sharpness-lci}
Let
   $ w=351624$.
The Billey--Warrington classification
\cite[Theorem~1]{BW03} gives
   $ \operatorname{maxsing}(w)
    =
    \{132654,321546\}$.
Both indexing permutations avoid \(3412\) and \(4231\). Hence, by
Theorem~\ref{thm:lakshmibai-sandhya},
    $X_{132654}$
and
    $X_{321546}$
are smooth.

Their complete sets of lower covers are
\begin{align*}
    \{y\in S_6:y\lessdot132654\}
        &=
        \{123654,132564,132645\},\\
    \{y\in S_6:y\lessdot321546\}
        &=
        \{231546,312546,321456\},
\end{align*}
as follows from Lemma~\ref{lem:bruhat-transposition-cover}. These two
sets are disjoint. Since these are the only two maximal singular
components, no pair of distinct elements of
\(\operatorname{maxsing}(w)\) has a common lower cover.

On the other hand, \(w\) itself is the forbidden pattern \(351624\).
Therefore \(X_w\) is not lci by
Theorem~\ref{thm:lci-pattern-criterion}. Thus the lci hypothesis in
Contact Rigidity cannot be omitted.
\end{example}

\begin{example}[One smooth maximal component is not enough]
\label{ex:sharpness-smooth}
Let
    $w=356412$.
A direct check shows that \(w\) avoids the six patterns in
Theorem~\ref{thm:lci-pattern-criterion}, so \(X_w\) is lci. Its
admissible cores are exactly
    $1456Q$
    and
    $2356Q$,
with lowerings
\[
    z(1456Q)=156324,
    \qquad
    z(2356Q)=315426.
\]
Proposition~\ref{prop:core-dictionary} therefore gives
    $\operatorname{maxsing}(w)
    =
    \{156324,315426\}$.

The first maximal component is singular: the entries of \(156324\)
in positions \(2,3,4,6\) are \(5,6,3,4\), and
\(\operatorname{fl}(5,6,3,4)=3412\).
Thus Theorem~\ref{thm:lakshmibai-sandhya} implies that
\(X_{156324}\) is singular. By contrast, \(315426\) avoids both
\(3412\) and \(4231\), so \(X_{315426}\) is smooth.

Their complete sets of lower covers are
\begin{align*}
    \{y\in S_6:y\lessdot156324\}
        &=
        \{136524,146325,153624,154326,156234\},\\
    \{y\in S_6:y\lessdot315426\}
        &=
        \{135426,215436,314526,315246\}.
\end{align*}
These sets are disjoint. Hence the two maximal singular components have
no common lower cover.

Thus the existence of only one smooth maximal singular component does
not force the conclusion of Contact Rigidity. The hypothesis that there
exist two distinct smooth maximal singular components cannot be weakened
to the existence of just one.
\end{example}

\section{The single-branch comparison theorem}
\label{chap:comparison-application}

Throughout this section, \(X_w\) is a Type~A lci Schubert variety, and
all coefficients are rational. We prove that
\[
    \operatorname{Sing}(X_w)=X_z,\quad X_z\text{ smooth}
    \quad\Longrightarrow\quad N_w\simeq IC_z,
\]
and combine this with Contact Rigidity.

Agreement on the dense cell does not determine a perverse sheaf.
For example, if \(j:\mathbb A^1\hookrightarrow\mathbb P^1\) is the
standard open embedding, then \(j_!\mathbb Q_{\mathbb A^1}[1]\) is
perverse because \(j\) is affine
\cite[Corollary~3.5.9, p.~155]{Ach21}. It agrees with
\(IC_{\mathbb P^1}=\mathbb Q_{\mathbb P^1}[1]\) on \(\mathbb A^1\),
but its stalk at infinity is zero. Accordingly, we use
Kazhdan--Lusztig values on every Bruhat stratum of \(X_z\) to exclude
additional composition factors on the boundary.

Section~\ref{sec:comparison-single-branch} gives the geometric
reduction, and Section~\ref{sec:kl-guarded} verifies the hypotheses of
Woo's theorem. Section~\ref{sec:kl-to-comparison} determines \([N_w]\)
from \(P_{u,w}(1)=2\), and hence identifies \(N_w\). The full
Kazhdan--Lusztig formula is then recovered as a consequence.

\subsection{The comparison kernel and the single-branch reduction}
\label{sec:comparison-single-branch}

We use the comparison sequence \eqref{eq:schubert-comparison-sequence}.

\begin{proposition}[Support of the comparison kernel]
\label{prop:comparison-support}
Let \(w\in S_n\), and assume that \(X_w\) is lci. Then
\begin{equation}
    \operatorname{supp}N_w
    =\operatorname{Sing}(X_w)
    =\bigcup_{z\in M_w}X_z.
    \label{eq:comparison-support-singular-locus}
\end{equation}
\end{proposition}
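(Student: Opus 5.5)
The plan is to assemble facts already recorded in Section~\ref{sec:perverse-kl}; the only genuinely geometric input is the Type~A identification of rational smoothness with smoothness. Since \(X_w\) is lci, \(\mathbb Q_{X_w}[\ell(w)]\) is perverse, and the comparison sequence \eqref{eq:schubert-comparison-sequence} is available with \(N_w=\ker(c_{X_w})\). By \eqref{eq:support-comparison-kernel}, applied with \(X=X_w\) and \(d=\ell(w)\), the closed support of \(N_w\) is the set of points at which \(X_w\) is not rationally smooth. I would re-derive this equality explicitly, as follows.

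Let \(U\) be the rationally smooth locus. It is open, since the rational homology manifold condition is local and is imposed on a whole neighbourhood. Restriction to \(U\) is \(t\)-exact and commutes with intermediate extension, so \(N_w|_U\) is the kernel of \(\mathbb Q_U[\ell(w)]\to IC_U\). On a rational homology manifold this map is an isomorphism by \cite[Theorem~2.1]{Hepler19}, so \(N_w|_U=0\) and the support lies in \(X_w\setminus U\).

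For the reverse inclusion, take \(x\) in the support's complement. Then there is an open neighbourhood \(V\) of \(x\) with \(N_w|_V=0\), so \(\mathbb Q_V[\ell(w)]\cong IC_V\). The cited equivalence then shows that \(V\) is a rational homology manifold, hence \(x\in U\). This also confirms that the closed-support convention matches: the non-rationally-smooth locus is closed, so no closure issue arises.

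Next I would invoke \eqref{eq:type-a-rational-smoothness}, which says that in Type~A rational smoothness and smoothness agree at every point of \(X_w\). Thus the non-rationally-smooth locus is exactly \(\operatorname{Sing}(X_w)\). Finally, \eqref{eq:singular-locus-components} writes \(\operatorname{Sing}(X_w)\) as \(\bigcup_{z\in M_w}X_z\), which gives \eqref{eq:comparison-support-singular-locus}.

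The only delicate point, which is the step I expect to be the main obstacle, is extending the pointwise Peterson/Carrell--Kuttler statement \cite{CK03}, which is proved at \(T\)-fixed points, to all points. The argument is that both smoothness and rational smoothness are \(B\)-invariant properties. Each point of \(X_w\) lies in some cell \(C_u=B e_u\) with \(u\le w\), so it can be moved by \(B\) to the fixed point \(e_u\). This was already stated in the preliminaries, so I would simply cite it.
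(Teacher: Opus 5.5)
Your proposal is correct and is the paper's argument. The paper's proof simply chains \eqref{eq:support-comparison-kernel}, the Type~A equivalence \eqref{eq:type-a-rational-smoothness} (Peterson/Carrell--Kuttler at fixed points, plus constancy along Bruhat cells), and \eqref{eq:singular-locus-components}; your inline re-derivation of the support identity matches the preliminaries, where the paper also notes that if \cite[Theorem~2.1]{Hepler19} gives only an abstract isomorphism \(\mathbb Q_U[d]\cong IC_U\), the comparison kernel still vanishes because it is a boundary-supported subobject of \(IC_U\).
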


\begin{proof}
By \eqref{eq:support-comparison-kernel}, the complement of
\(\operatorname{supp}N_w\) is the rationally smooth locus of \(X_w\).
In Type~A this is the smooth locus
\eqref{eq:type-a-rational-smoothness}. The final equality follows from
\eqref{eq:singular-locus-components}.
\end{proof}

The following consequence combines Contact Rigidity with the
single-branch theorem proved in Section~\ref{sec:kl-to-comparison}.

\begin{theorem}[Type A Comparison Theorem]
\label{thm:type-a-comparison}
Let \(w\in S_n\), and let
\(X_w\subseteq \operatorname{GL}_n(\mathbb C)/B\) be an lci Schubert
variety. Work with rational coefficients and set
   $ M_w=\operatorname{maxsing}(w)$.
Assume that:
\begin{enumerate}
    \item \(X_z\) is rationally smooth for every \(z\in M_w\);
    \item for every pair of distinct elements \(z,z'\in M_w\), the
    intersection \(X_z\cap X_{z'}\) has codimension at least two in both
    \(X_z\) and \(X_{z'}\).
\end{enumerate}
Here, if \(X_z\cap X_{z'}\) is reducible, ``codimension at least two''
means that every irreducible component of the intersection has
codimension at least two in the specified Schubert variety.
Then
    $|M_w|\leq1$
and the comparison kernel is determined as follows:
\begin{equation}
    N_w\simeq
    \begin{cases}
        0, & M_w=\varnothing,\\[2mm]
        IC_z, & M_w=\{z\}.
    \end{cases}
    \label{eq:type-a-comparison-conclusion}
\end{equation}
\end{theorem}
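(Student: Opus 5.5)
The argument is a short combination of Contact Rigidity with the single-branch comparison theorem. The plan has two parts: show \(|M_w|\leq1\), then read off \(N_w\) in each case.

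First, rational smoothness of each \(X_z\), \(z\in M_w\), upgrades to smoothness by \eqref{eq:type-a-rational-smoothness}. The reason is that a Schubert variety is rationally smooth everywhere exactly when it is rationally smooth at \(e_{\mathrm{id}}\), by \(B\)-stability and the closure order, and this is equivalent to smoothness in Type~A via the Peterson ADE theorem \cite{CK03}.

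Suppose \(|M_w|\geq2\). Choose distinct \(z_1,z_2\in M_w\); both \(X_{z_i}\) are smooth. Theorem~\ref{thm:contact-rigidity-proof} then produces distinct \(z,z'\in M_w\) and \(y\) with \(y\lessdot z\) and \(y\lessdot z'\), so \(X_y\subseteq X_z\cap X_{z'}\) has codimension one in both. We must check that this contradicts hypothesis~(2). Every irreducible component of \(X_z\cap X_{z'}\) is a closed \(B\)-stable irreducible subvariety, hence a Schubert variety \(X_v\), by the Bruhat decomposition \eqref{eq:schubert-cell-closure}. Some component contains \(X_y\), and it is a proper subvariety of \(X_z\), since \(z\neq z'\) are incomparable maximal elements. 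Its dimension is therefore \(\ell(y)=\ell(z)-1\), so its codimension in \(X_z\) is exactly one. This contradiction gives \(|M_w|\leq1\).

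If \(M_w=\varnothing\), then \(\operatorname{Sing}(X_w)=\varnothing\), so \(\operatorname{supp}N_w=\varnothing\) by Proposition~\ref{prop:comparison-support}. Hence \(N_w=0\), because a perverse sheaf with empty support vanishes. If \(M_w=\{z\}\), then \(\operatorname{Sing}(X_w)=X_z\) with \(X_z\) smooth. Theorem~\ref{thm:intro-single-branch-comparison} gives \(N_w\simeq IC_z\), and we may invoke it because the paper proves it independently of this consequence in Section~\ref{sec:kl-to-comparison}.

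The only delicate point is the dimension bookkeeping in the first part: codimension one is needed for some genuine component of the intersection, not just for the subvariety \(X_y\). The component argument above handles this, since it relies only on \(B\)-stability and the fact that \(z\) and \(z'\) are incomparable.
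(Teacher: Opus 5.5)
Your proposal is correct and follows the paper's proof. You upgrade rational smoothness to smoothness, Contact Rigidity then contradicts hypothesis~(2) to give \(|M_w|\leq1\) as in Proposition~\ref{prop:single-branch-reduction}, and the two cases are settled by \(\operatorname{supp}N_w=\varnothing\) and Theorem~\ref{thm:single-branch-comparison}. Your explicit passage to an irreducible component of \(X_z\cap X_{z'}\) fills in a step the paper leaves implicit, though it suffices to note that any component containing the irreducible \(X_y\) has dimension at least \(\ell(z)-1\), so the incomparability of \(z,z'\) is not actually needed.
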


The geometric conclusion follows directly from Contact Rigidity.

\begin{proposition}[Single-branch reduction]
\label{prop:single-branch-reduction}
Under the hypotheses of Theorem~\ref{thm:type-a-comparison}, one has
    $|M_w|\leq1$.
\end{proposition}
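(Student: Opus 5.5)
We argue by contradiction. Suppose \(|M_w|\geq2\), and choose distinct \(z_1,z_2\in M_w\). By hypothesis~(1), \(X_{z_1}\) and \(X_{z_2}\) are rationally smooth. By \eqref{eq:type-a-rational-smoothness}, rational smoothness at every point is equivalent to smoothness at every point in Type~A, so \(X_{z_1}\) and \(X_{z_2}\) are smooth. Alternatively, apply the ADE theorem \cite[ADE-Theorem]{CK03} to the Schubert varieties \(X_{z_i}\) themselves. Thus the hypotheses of Theorem~\ref{thm:contact-rigidity-proof} hold: \(X_w\) is lci and has two distinct smooth singular components.

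Theorem~\ref{thm:contact-rigidity-proof} then supplies distinct \(z,z'\in M_w\) and \(y\in S_n\) with \(y\lessdot z\) and \(y\lessdot z'\). By \eqref{eq:cover-codimension-one}, \(X_y\subseteq X_z\cap X_{z'}\), and \(X_y\) is irreducible of dimension \(\ell(z)-1=\ell(z')-1\). Note that \(\ell(z)=\ell(z')\), since both equal \(\ell(y)+1\).

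It remains to see that this contradicts hypothesis~(2). The intersection \(X_z\cap X_{z'}\) is closed and \(B\)-stable, so it is a union of Schubert varieties \(X_u\) with \(u\leq z\) and \(u\leq z'\). Its irreducible components are the \(X_u\) for maximal such \(u\). Since \(y\) lies below both \(z\) and \(z'\), some component \(X_u\) contains \(X_y\). Because \(z\neq z'\) and the two lengths agree, \(u\) can equal neither \(z\) nor \(z'\): for example, \(u=z\) would force \(z\leq z'\) with equal length, hence \(z=z'\). Therefore \(u<z\), and \(\ell(y)\leq\ell(u)<\ell(z)=\ell(y)+1\) gives \(u=y\). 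So \(X_y\) is an irreducible component of \(X_z\cap X_{z'}\) of codimension one in both \(X_z\) and \(X_{z'}\). This contradicts~(2), hence \(|M_w|\leq1\).

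The only delicate point is this last step: hypothesis~(2) is phrased componentwise, so we must check that \(X_y\) is itself a component of the intersection and not merely contained in a larger one. The length count above settles this. The rest is a direct application of Contact Rigidity.
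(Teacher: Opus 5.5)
Your proof is correct and follows the paper's argument. You pass from rational smoothness to smoothness via \eqref{eq:type-a-rational-smoothness}, apply Contact Rigidity to get a common lower cover $y$, and derive a contradiction with hypothesis~(2); your extra check that $X_y$ is itself a component of $X_z\cap X_{z'}$ is valid but not needed, since any component containing $X_y$ already has dimension at least $\ell(z)-1=\ell(z')-1$ and hence codimension at most one in both.
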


\begin{proof}
By hypothesis~(1) and \eqref{eq:type-a-rational-smoothness}, every
\(X_z\), \(z\in M_w\), is smooth. If \(|M_w|\geq2\), Contact
Rigidity (Theorem~\ref{thm:contact-rigidity-proof}) gives distinct
\(z,z'\in M_w\) with \(y\lessdot z\) and \(y\lessdot z'\). Then
\(X_y\subseteq X_z\cap X_{z'}\) has codimension one in each by
\eqref{eq:cover-codimension-one}, contradicting hypothesis~(2).
\end{proof}

If \(M_w=\varnothing\), then \(X_w\) is smooth by
\eqref{eq:singular-locus-components} and \(N_w=0\).
Until the final proof of Theorem~\ref{thm:type-a-comparison}, we impose
\begin{equation}
    M_w=\{z\},
    \qquad
    \operatorname{Sing}(X_w)=X_z,
    \qquad
    X_z\text{ is smooth}.
    \label{eq:single-branch-hypothesis}
\end{equation}
By Proposition~\ref{prop:comparison-support}, this also gives
    $\operatorname{supp}N_w=X_z$.

\begin{remark}[On the codimension-two hypothesis]
\label{rem:codim-two-hypothesis}

The codimension-two hypothesis is used only in
Proposition~\ref{prop:single-branch-reduction} to exclude multiple
singular components. The single-branch comparison proof uses neither
this hypothesis nor Contact Rigidity. Thus
Theorem~\ref{thm:type-a-comparison} is not a decomposition theorem for
a reducible singular locus; factors on component intersections and
their extension data remain outside its scope.

\end{remark}

\subsection{Guarded obstructions and Woo's theorem}
\label{sec:kl-guarded}

Woo's theorem applies when the singular locus has one irreducible
component and \(w\) avoids
\[
    653421,\quad632541,\quad463152,\quad526413,\quad546213,\quad465132.
\]
It then gives \(P_{u,w}(q)=1+q^h\) for all \(u\leq z\), with some
\(h\geq1\); see \cite[Theorem~1.1 and Corollary~1.2, pp.~2--3]{Woo09}.
For the comparison proof, we only need its value at \(q=1\).

Three of the six patterns are already excluded by the lci criterion.
Indeed, deleting the fourth entry of \(632541\) gives \(63241\), which
has pattern \(53241\). Deleting the third entry of \(463152\) gives
\(46152\), with pattern \(35142\). Deleting the fourth entry of
\(526413\) gives \(52613\), with pattern \(42513\).
Each resulting pattern is forbidden by
Theorem~\ref{thm:lci-pattern-criterion}.

For each remaining pattern, we tighten four of its points to an
admissible core while retaining two fixed guard points. After lowering,
the guards and two core points exhibit a \(3412\) or \(4231\) pattern,
so the Core Dictionary and the smoothness criterion produce a singular
maximal component. The next two lemmas and theorem require only the
lci hypothesis; \eqref{eq:single-branch-hypothesis} is suspended until
Proposition~\ref{prop:single-branch-kl-values}.

\begin{lemma}[Guarded \(P\)-tightening]
\label{lem:guarded-p-tightening}
Let \(w\in S_n\) be lci. Let \(g<e\) be positions, and put \(L=w_e<H=w_g\).
Suppose that a \(4231\)-occurrence in \(w\) has all four positions in
\((g,e)\) and all four values in \((L,H)\).
Then there is an admissible \(P\)-core with the same bounds.
\end{lemma}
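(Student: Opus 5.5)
Among all \(4231\)-occurrences \((a,b,c,d)\) in \(w\) with \(g<a<b<c<d<e\) and \(L<w_d<w_b<w_c<w_a<H\), choose one that minimises a suitable potential. The natural choice is the area-type quantity
\[
  \Phi(a,b,c,d)=(d-a)+(w_a-w_d)+(c-b)+(w_c-w_b),
\]
or equivalently the number of graph points in the bounding box \((a,d)\times(w_d,w_a)\). The family is nonempty by hypothesis and finite, so a minimiser exists. I would then show that a minimiser is an admissible \(P\)-core. Any point \((p,w_p)\) lying in \(P_L\), \(P_C\) or \(P_R\) lies strictly inside the bounding box of the occurrence, so it lies in \((g,e)\times(L,H)\). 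It therefore suffices to show that replacing one core point by \((p,w_p)\) gives a new occurrence that is still in the bounds and has strictly smaller potential, or else that \(w\) contains an lci-forbidden pattern.

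The cases are handled region by region.

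\(P_L\): a point with \(a<p<b\) and \(w_b<w_p<w_a\).
\begin{itemize}
\item If \(w_p>w_c\), replace \(a\) by \(p\); this gives a \(4231\) with a smaller box.
\item If \(w_b<w_p<w_c\), replace \(b\) by \(p\); this gives a \(4231\) with a shorter span \(c-b\).
\end{itemize}

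\(P_R\): a point with \(c<p<d\) and \(w_d<w_p<w_c\). This is symmetric to \(P_L\).
\begin{itemize}
\item If \(w_p<w_b\), replace \(d\) by \(p\).
\item If \(w_b<w_p<w_c\), replace \(c\) by \(p\).
\end{itemize}

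\(P_C\): a point with \(b<p<c\) and \(w_d<w_p<w_a\).
\begin{itemize}
\item If \(w_p>w_c\), replace \(c\) by \(p\), giving \((a,b,p,d)\).
\item If \(w_p<w_b\), replace \(b\) by \(p\), giving \((a,p,c,d)\).
\item If \(w_b<w_p<w_c\), the positions \(a,b,p,c,d\) give either \(52341\) or \(52431\), according as \(w_p\) is the smaller or the larger of the middle three values in the appropriate order. Both patterns are forbidden by Theorem~\ref{thm:lci-pattern-criterion}.
\end{itemize}

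Each replacement keeps all four positions in \((g,e)\) and all four values in \((L,H)\), because the new point lies in the old bounding box.

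The main obstacle is the choice of the potential. It must strictly decrease under each of the six replacements, including those that widen one inner gap while shrinking another. For instance, replacing \(b\) by a \(P_C\) point below \(w_b\) increases \(w_c-w_b\) while decreasing \(c-b\). A naive sum may therefore fail to decrease.

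My first attempt would be to order lexicographically by the number of graph points in the closed bounding box of the four core points, and then by \(c-b\). Replacing an outer point shrinks the box. Replacing an inner point keeps the box and reduces the interior chain between \(b\) and \(c\). I would check the six cases against this ordering. If some case still fails, I would instead fix an outer pair \((a,d)\) with minimal box, and then choose \(b,c\) extremally inside it: \(b\) leftmost, then \(c\) rightmost subject to \(w_b<w_c\). I would verify the three regions directly for that extremal choice.
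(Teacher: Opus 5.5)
There is a genuine gap: you never fix a potential that actually decreases, and the ordering you settle on fails in exactly the cases you say it handles.

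In \(P_L\) with \(w_b<w_p<w_c\) you replace \(b\) by \(p\). But \(a<p<b\), so the new inner span is \(c-p>c-b\), not shorter. Symmetrically, in \(P_R\) with \(w_b<w_p<w_c\) you replace \(c\) by \(p>c\), and \(p-b>c-b\). The bounding box is unchanged in both replacements, so your lexicographic order (box count, then \(c-b\)) strictly increases. The sum \(\Phi\) fails in the same cases, and it is not equivalent to the box count.

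Your fallback (minimal box, then \(b\) leftmost, then \(c\) rightmost) does dispose of these two subcases. However, it is only sketched, and it clashes with your own \(P_C\) replacements. Moving \(b\) right to \(p\), or \(c\) left to \(p\), contradicts neither extremal choice.

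The missing idea is to let the lci hypothesis do more of the work. For any \(4231\)-occurrence, a point \((p,w_p)\in P_C\) together with \(a,b,c,d\) forms \(53241\), \(52341\) or \(52431\), according as \(w_p<w_b\), \(w_b<w_p<w_c\) or \(w_p>w_c\). (Your middle subcase gives exactly \(52341\), not an either/or.) So \(P_C\) is empty with no minimality at all, and no replacement should be used there.

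This is what the paper does. It then minimises \(b-a\), and then \(d-c\). Both \(P_L\) replacements, \((a,p,c,d)\) and \((p,b,c,d)\), shrink the first gap. Both \(P_R\) replacements, \((a,b,c,p)\) and \((a,b,p,d)\), keep the first gap and shrink the last.

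Alternatively, your two inner subcases are themselves forbidden. The positions \(a,p,b,c,d\) give \(53241\) when \(p\) lies in \(P_L\) with \(w_b<w_p<w_c\). The positions \(a,b,c,p,d\) give \(52431\) when \(p\) lies in \(P_R\) with \(w_b<w_p<w_c\). Only the outer replacements \(a\mapsto p\) and \(d\mapsto p\) then remain, and minimising \(d-a\) (or your box count) alone suffices.
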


\begin{proof}
Among all occurrences satisfying these bounds, choose
\(C=(a,b,c,d)\) with \(b-a\) minimal, and then with \(d-c\) minimal.
The family is finite and nonempty. Write
\(\delta=w_d<\beta=w_b<\gamma=w_c<\alpha=w_a\).
All replacements below remain within the same position and value
bounds.

Suppose that \((p,w_p)\in P_L(C)\), and write \(\eta=w_p\).
Then \(a<p<b\) and \(\beta<\eta<\alpha\).
If \(\eta<\gamma\), replace \(C\) by \((a,p,c,d)\).
If \(\eta>\gamma\), replace it by \((p,b,c,d)\).
Both replacements are \(4231\)-occurrences and decrease the first
gap. Hence \(G(w)\cap P_L(C)=\varnothing\).

Suppose that \((p,w_p)\in P_R(C)\).
Then \(c<p<d\) and \(\delta<\eta=w_p<\gamma\).
If \(\eta<\beta\), use \((a,b,c,p)\); if \(\eta>\beta\), use
\((a,b,p,d)\). Both replacements keep the first gap unchanged and
decrease the second. Thus \(G(w)\cap P_R(C)=\varnothing\).

Finally, suppose that \((p,w_p)\in P_C(C)\).
The values at \(a,b,p,c,d\) are \(\alpha,\beta,\eta,\gamma,\delta\),
where \(\delta<\eta<\alpha\). Their pattern is \(53241\) if
\(\eta<\beta\), \(52341\) if \(\beta<\eta<\gamma\), and \(52431\)
if \(\eta>\gamma\). All three are forbidden by
Theorem~\ref{thm:lci-pattern-criterion}. Hence
\(G(w)\cap P_C(C)=\varnothing\).
The three conditions in Definition~\ref{def:p-core} now hold.
\end{proof}

For a \(3412\)-occurrence \(C=(a,b,c,d)\), write
\(\gamma=w_c<\delta=w_d<\alpha=w_a<\beta=w_b\), and define its
\emph{height} and \emph{amplitude} by
\(\operatorname{ht}(C)=\alpha-\delta\) and
\(\operatorname{amp}(C)=\beta-\gamma\).
Woo uses minimisation of height followed by amplitude in
\cite[Section~4.1, p.~11]{Woo09}.
We use the same choice within a family that is closed under specified
replacements. This allows us to retain the two extra points needed
after lowering.

\begin{lemma}[Stable-family \(Q\)-tightening]
\label{lem:stable-q-tightening}
Let \(w\in S_n\) be lci, and let \(\mathcal Q\) be a nonempty family of \(3412\)-occurrences in
\(w\). For each \(C=(a,b,c,d)\in\mathcal Q\), assume that whenever
the condition in the first column holds, the tuple in the second column
also belongs to \(\mathcal Q\):
\begin{center}
\begin{tabular}{@{}ll@{}}
\toprule
Condition & Required member of \(\mathcal Q\)\\
\midrule
\((p,w_p)\in Q_1(C)\) & \((p,b,c,d)\)\\
\((p,w_p)\in Q_3(C)\) & \((a,b,c,p)\)\\
\((p,w_p)\in Q_2(C)\) & \((a,p,c,d)\)\\
\((p,w_p)\in Q_4(C)\) & \((a,b,p,d)\)\\
\((p,w_p),(q,w_q)\in B(C),\ p<q,\ w_p<w_q\)
    & \((a,b,p,q)\)\\
\bottomrule
\end{tabular}
\end{center}
Then \(\mathcal Q\) contains an admissible \(Q\)-core.
\end{lemma}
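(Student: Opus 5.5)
We choose a minimiser in \(\mathcal Q\) and show that any failure of admissibility produces a strictly smaller member of \(\mathcal Q\), or a forbidden lci pattern. The order is lexicographic on \((\operatorname{ht}(C),\operatorname{amp}(C))\), and we may refine ties by a third positional quantity if needed. Since \(\mathcal Q\) is finite and nonempty, a minimiser \(C=(a,b,c,d)\) exists. Throughout we write \(\gamma=w_c<\delta=w_d<\alpha=w_a<\beta=w_b\).

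\textbf{The emptiness conditions.} Each required region is handled by the replacement listed in the table.

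If \((p,w_p)\in Q_1(C)\), then \(\delta<w_p<\alpha\), and \((p,b,c,d)\) is again a \(3412\)-occurrence. It has the same \(\beta,\gamma,\delta\) and height \(w_p-\delta<\alpha-\delta\), contradicting minimality.

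If \((p,w_p)\in Q_3(C)\), then \((a,b,c,p)\) has height \(\alpha-w_p<\operatorname{ht}(C)\).

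If \((p,w_p)\in Q_2(C)\), then \(\alpha<w_p<\beta\), and \((a,p,c,d)\) keeps the height while the amplitude drops to \(w_p-\gamma\).

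If \((p,w_p)\in Q_4(C)\), then \((a,b,p,d)\) keeps the height and has amplitude \(\beta-w_p<\operatorname{amp}(C)\).

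In each case one checks that the new quadruple has order type \(3412\) with its positions correctly ordered; for instance, \(p\in(a,b)\) in the \(Q_1\) case. This settles \(Q_1,\dots,Q_4\). The regions \(Q_{A_1}\) and \(Q_{A_2}\) are empty for every \(3412\)-occurrence by Lemma~\ref{lem:lci-a-regions}, since \(w\) is lci, so no replacement is needed for them.

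\textbf{Reducedness.} Suppose \(p<q\) lie in \(B(C)\) with \(w_p<w_q\), both values in \((\delta,\alpha)\). The table gives \((a,b,p,q)\in\mathcal Q\), with critical values \(w_p<w_q<\alpha<\beta\). This is a \(3412\)-occurrence because \(b<p<q\). Its height is \(\alpha-w_q<\alpha-\delta\), contradicting minimality.

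The main point to verify is that the order type is correct in every replacement, including the boundary comparisons. For example, in the \(B\) case one needs \(w_p<w_q<\alpha\), which holds. In the \(Q_2\) and \(Q_4\) cases the height is unchanged, so the decrease must come from the amplitude. This is exactly why the order is lexicographic, following Woo's choice.

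With all required regions empty and \(B(C)\) decreasing, \(C\) satisfies Definition~\ref{def:q-core} and is therefore an admissible \(Q\)-core in \(\mathcal Q\).
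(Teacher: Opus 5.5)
Your proof is correct and follows the paper's argument essentially verbatim. Both take a lexicographic minimiser of \((\operatorname{ht},\operatorname{amp})\), use the \(Q_1\), \(Q_3\) and \(B\) replacements to lower the height and the \(Q_2\), \(Q_4\) replacements to lower the amplitude at fixed height, and dispose of \(Q_{A_1}\) and \(Q_{A_2}\) via Lemma~\ref{lem:lci-a-regions}; your hedges about a third tie-breaker and a possible forbidden lci pattern are never used and can be dropped.
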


\begin{proof}
Choose \(C=(a,b,c,d)\in\mathcal Q\) with minimal height, and then with
minimal amplitude. Both minima exist because the family is finite.
Write \(\gamma=w_c<\delta=w_d<\alpha=w_a<\beta=w_b\).

A point in \(Q_1(C)\) gives a member of height
\(w_p-\delta<\alpha-\delta\). A point in \(Q_3(C)\) gives one of
height \(\alpha-w_p<\alpha-\delta\). Thus neither region contains
a graph point.

A point in \(Q_2(C)\) gives a member with the same height and amplitude
\(w_p-\gamma<\beta-\gamma\). A point in \(Q_4(C)\) gives one with
the same height and amplitude \(\beta-w_p<\beta-\gamma\).
Hence these two regions also contain no graph points.

If the graph points in \(B(C)\) are not decreasing, there are
\(p<q\) in that region with
\(\delta<w_p<w_q<\alpha\).
The last replacement gives a member of height
\(\alpha-w_q<\alpha-\delta\), again a contradiction.
Thus the middle region is reduced.

The lci hypothesis gives
\(G(w)\cap Q_{A_1}(C)=G(w)\cap Q_{A_2}(C)=\varnothing\) by
Lemma~\ref{lem:lci-a-regions}.
All conditions in Definition~\ref{def:q-core} hold, so \(C\) is
admissible.
\end{proof}

\begin{theorem}[Guarded-obstruction theorem]
\label{thm:guarded-obstruction}
Let \(w\in S_n\) be lci. If \(w\) contains \(653421\), \(546213\), or \(465132\), then there
is \(v\in\operatorname{maxsing}(w)\) such that \(X_v\) is singular.
\end{theorem}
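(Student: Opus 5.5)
The plan is to handle the three patterns separately, in each case following the guard strategy announced before Lemma~\ref{lem:guarded-p-tightening}. Fix an occurrence of the pattern in \(w\). Split its six points into two \emph{guard} points and a four-point \(4231\)- or \(3412\)-occurrence. Then tighten the four-point occurrence to an admissible core, using Lemma~\ref{lem:guarded-p-tightening} or Lemma~\ref{lem:stable-q-tightening}, while keeping the guards in a prescribed position relative to it. Finally, exhibit a \(3412\)- or \(4231\)-occurrence in the lowering \(z(C)\) built from the guards and two lowered core entries. Proposition~\ref{prop:core-dictionary} gives \(z(C)\in\operatorname{maxsing}(w)\), and Theorem~\ref{thm:lakshmibai-sandhya} then shows that \(X_{z(C)}\) is singular, so \(v=z(C)\) works.

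\emph{The pattern \(653421\).} Take the guards to be the first and last entries, with positions \(g<e\) and values \(H=w_g\) and \(L=w_e\). The middle entries \(5,3,4,2\) form a \(4231\)-occurrence lying inside \((g,e)\times(L,H)\). Lemma~\ref{lem:guarded-p-tightening} supplies an admissible \(P\)-core \(C=(a,b,c,d)\) within the same bounds. After lowering, \(z_b=w_d\) and \(z_c=w_a\), so the positions \(g<b<c<e\) carry values \(H,w_d,w_a,L\). Since \(L<w_d<w_a<H\), these have pattern \(4231\). This case is immediate.

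\emph{The patterns \(546213\) and \(465132\).} Here the inner occurrence will be a \(3412\), and I would use Lemma~\ref{lem:stable-q-tightening}. For \(546213\), take the core on the entries \(4,6,2,3\) and the guards \(5\) (first) and \(1\) (fourth). Lowering gives \(5,2,4,1,3,6\), whose entries \(5,2,4,1\) form \(4231\): these are the guard \(H\), then \(z_a=w_c\), then \(z_b=w_a\), then the guard \(L\). Accordingly, let \(\mathcal Q\) be the family of \(3412\)-occurrences \((a,b,c,d)\) satisfying four conditions: \(g<a\); \(w_g>w_b\); some fixed guard position \(e\) lies in \((b,c)\); and \(w_e<w_c\). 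Any member then yields the desired \(4231\) in its lowering. For \(465132\), I would proceed in the same way: choose the \(3412\)-sub-occurrence and the two remaining guards by trying the few splittings, lowering by \(3412\mapsto1324\), and recording which inequalities between the guards and the critical points make the lowered word contain \(3412\) or \(4231\). Those inequalities then define \(\mathcal Q\).

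\emph{Main obstacle.} The hard step is checking the closure table of Lemma~\ref{lem:stable-q-tightening} for these families. Replacements by points of \(Q_1\) and \(Q_3\) preserve the guard conditions automatically. Replacements by points of \(Q_2\) and \(Q_4\), and the \(B\)-replacement, move \(b\) or \(c\) inside \((b,c)\) and may jump across the guard \(e\). I would first try to choose the guard extremally, for instance the leftmost or lowest admissible guard, so that a replacement point \(p\) on the wrong side of \(e\) can itself serve as a new guard. If that fails, the configuration \(\{a,b,p,e,c,d\}\) has to be examined directly. I expect it then to contain \(42513\), \(35142\), or \(53241\), contradicting lci via Theorem~\ref{thm:lci-pattern-criterion}, in the same way that the \(P_C\) case was disposed of in Lemma~\ref{lem:guarded-p-tightening}. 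Confirming this last claim is the finite but fiddly part of the argument, and the first thing I would check in detail.
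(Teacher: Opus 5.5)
Your \(653421\) case is correct. After lowering, the positions \(g<b<c<e\) carry \(H,w_d,w_a,L\), and this works just as well as the paper's \(g<a'<c'<e\).

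The gap is in \(546213\), where you have misread the pattern. Its letters are \(5,4,6,2,1,3\). With the core \(4,6,2,3\) on positions \(2,3,4,6\), the guard of value \(1\) is the \emph{fifth} letter, so it lies in \((c,d)\), not \((b,c)\). The lowering is \(5,2,4,3,1,6\), not \(5,2,4,1,3,6\). Moreover \(w_g=5<6=w_b\); in fact every \(3412\)-occurrence inside \(546213\) has \(\beta=6\) at position \(3\). So neither \(e\in(b,c)\) nor \(w_g>w_b\) holds for the occurrence you start from. Your family \(\mathcal Q\) is therefore not known to be nonempty, and Lemma~\ref{lem:stable-q-tightening} cannot be invoked. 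The ``main obstacle'' you describe, with \(Q_2\), \(Q_4\) and \(B\) replacements jumping across \(e\), comes from this misplacement. The same goes for your unverified expectation that \(\{a,b,p,e,c,d\}\) contains an lci-forbidden pattern. As written, this step is not established. For \(465132\) you give no family at all.

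The missing idea is to impose only what the final \(4231\) needs. After lowering, the positions \(g<a'<b'<e\) carry \(H,\gamma',\alpha',L\). The paper therefore takes \(\mathcal Q_L\) defined by \(g<a'\), \(c'<e\), \(w_{a'}<H\), \(w_{c'}>L\). This family contains the initial occurrence, and closure is immediate:
\begin{itemize}
\item a \(Q_1\)-replacement moves the first point right and down;
\item a \(Q_4\)-replacement or the \(B\)-replacement moves the third point left and up (for the latter, \(w_p>w_{d'}>w_{c'}>L\));
\item \(Q_2\) and \(Q_3\) change only the second and fourth points.
\end{itemize}
Also \(e\neq d'\), because \(w_{d'}>w_{c'}>L\).

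For \(465132\), the paper avoids a second analysis by the \(180^\circ\) rotation \(\rho(v)_i=n+1-v_{n+1-i}\). This rotation:
\begin{itemize}
\item preserves the lci forbidden set, swapping \(53241\leftrightarrow52431\) and \(35142\leftrightarrow42513\);
\item fixes \(3412\) and \(4231\);
\item preserves admissibility, exchanging \(P_L\leftrightarrow P_R\), \(Q_1\leftrightarrow Q_3\), \(Q_2\leftrightarrow Q_4\) and \(Q_{A_1}\leftrightarrow Q_{A_2}\);
\item commutes with lowering, and sends \(465132\) to \(546213\).
\end{itemize}
A direct family would also work. Take guards \(\tilde e<b'\) with \(w_{\tilde e}>w_{b'}\) and \(\tilde g>d'\) with \(w_{\tilde g}<w_{d'}\); after lowering this gives \(4231\) on \(\tilde e<c'<d'<\tilde g\). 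Either way, some such argument has to be supplied.
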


\begin{proof}
\emph{The pattern \(653421\).}
Choose occurrence positions \(g<a<b<c<d<e\).
Their values satisfy
\(L:=w_e<w_d<w_b<w_c<w_a<H:=w_g\).
Lemma~\ref{lem:guarded-p-tightening} gives an admissible
\(P\)-core \(C=(a',b',c',d')\) with the same bounds.
Write its values as
\(L<\delta'<\beta'<\gamma'<\alpha'<H\).
The lowering changes the core values from
\((\alpha',\beta',\gamma',\delta')\) to
\((\beta',\delta',\alpha',\gamma')\).
The positions \(g<a'<c'<e\) therefore have values
\(H,\beta',\alpha',L\), which form \(4231\).
The two guard positions \(g,e\) are outside the core and are unchanged.
Thus \(X_{z(C)}\) is singular by
Theorem~\ref{thm:lakshmibai-sandhya}, and
\(z(C)\in\operatorname{maxsing}(w)\) by
Proposition~\ref{prop:core-dictionary}.

\emph{The pattern \(546213\).}
Choose occurrence positions \(g<a<b<c<e<d\), and put
\(L=w_e\), \(H=w_g\). Then
\(L<w_c<w_d<w_a<H<w_b\), so \((a,b,c,d)\) is a
\(3412\)-occurrence.
Let \(\mathcal Q_L\) consist of all \(3412\)-occurrences
\((a',b',c',d')\) satisfying
\begin{equation}
    g<a',\qquad c'<e,\qquad w_{a'}<H,\qquad w_{c'}>L.
    \label{eq:QL-family}
\end{equation}
The initial occurrence belongs to this family. We now check its closure
under the five replacements in Lemma~\ref{lem:stable-q-tightening}.
A \(Q_1\)-replacement moves the first position to the right and
decreases its value, so it preserves \(g<a'\) and \(w_{a'}<H\).
The \(Q_3\)- and \(Q_2\)-replacements change only the fourth and
second positions, respectively, so they do not change the guards.
A \(Q_4\)-replacement moves the third position to the left and
increases its value. The bounds \(c'<e\) and \(w_{c'}>L\) remain valid.
For the last replacement, one has
\(b'<p<q<c'<e\) and \(w_p>w_{d'}>w_{c'}>L\).
Thus the new third position \(p\) also satisfies the guards.
All five replacements stay in \(\mathcal Q_L\).

The lemma gives an admissible \(Q\)-core
\(C=(a',b',c',d')\in\mathcal Q_L\).
Write \(\gamma'=w_{c'}<\delta'=w_{d'}<\alpha'=w_{a'}<\beta'=w_{b'}\).
The guards imply \(g<a'<b'<c'<e\) and
\(L<\gamma'<\alpha'<H\).
Also \(d'\neq e\), since \(w_{d'}>w_{c'}>L=w_e\).
Hence both \(g\) and \(e\) lie outside the core.
After lowering, the positions \(g<a'<b'<e\) have values
\(H,\gamma',\alpha',L\), which form \(4231\).
As in the first case, the smoothness criterion and the Core Dictionary
give a singular maximal component.

\emph{The pattern \(465132\).}
We reduce this case to the preceding one.
For \(v\in S_r\), define
\(\rho(v)_i=r+1-v_{r+1-i}\).
This is rotation of the permutation graph by \(180^\circ\), and
\(\rho^2=\operatorname{id}\).
Rotation takes an occurrence of a pattern \(p\) to an occurrence of
\(\rho(p)\). On the lci forbidden patterns it exchanges
\(53241\) with \(52431\), exchanges \(35142\) with \(42513\), and
fixes \(52341\) and \(351624\).
Thus \(X_{\rho(w)}\) is lci. It also fixes \(3412\) and \(4231\),
so it preserves the smoothness criterion.

For a core \(C=(a,b,c,d)\) in \(w\), its rotated core has positions
\((n+1-d,n+1-c,n+1-b,n+1-a)\) in \(\rho(w)\).
Definitions~\ref{def:p-core} and~\ref{def:q-core} show that rotation
exchanges the following pairs of required regions:
\[
    P_L\leftrightarrow P_R,\qquad
    Q_1\leftrightarrow Q_3,\qquad
    Q_2\leftrightarrow Q_4,\qquad
    Q_{A_1}\leftrightarrow Q_{A_2}.
\]
It takes \(P_C\) to the corresponding central region and \(B(C)\) to
the corresponding middle region. Reversing both coordinates preserves
the decreasing condition in the middle region.
Hence admissibility is preserved. The two lowering formulas also give
\(z(\rho(C))=\rho(z(C))\), where \(\rho(C)\) denotes the rotated core.

Since \(\rho(465132)=546213\), the preceding case, applied to
\(\rho(w)\), produces an admissible core with singular lowering.
Rotating that core back gives an admissible core in \(w\) whose
lowering is singular. The Core Dictionary again gives the required
maximal component.
\end{proof}

\begin{proposition}[Single-branch Kazhdan--Lusztig values]
\label{prop:single-branch-kl-values}
Under \eqref{eq:single-branch-hypothesis}, one has
\begin{equation}
    P_{u,w}(1)=2\qquad(u\leq z).
    \label{eq:single-branch-kl-values}
\end{equation}
\end{proposition}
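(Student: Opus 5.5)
The plan is to verify the hypotheses of Woo's theorem \cite[Theorem~1.1 and Corollary~1.2]{Woo09} and then evaluate the resulting polynomial at \(q=1\). Under \eqref{eq:single-branch-hypothesis}, the singular locus of \(X_w\) has exactly one irreducible component, namely \(X_z\). It therefore remains to show that \(w\) avoids the six patterns
\[
653421,\quad 632541,\quad 463152,\quad 526413,\quad 546213,\quad 465132.
\]

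The three patterns \(632541\), \(463152\), and \(526413\) are handled by the deletions recorded at the start of Section~\ref{sec:kl-guarded}. Each contains one of \(53241\), \(35142\), or \(42513\), so the lci criterion (Theorem~\ref{thm:lci-pattern-criterion}) excludes it.

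For the other three patterns \(653421\), \(546213\), and \(465132\), argue by contradiction. If \(w\) contained one of them, Theorem~\ref{thm:guarded-obstruction} would give some \(v\in\operatorname{maxsing}(w)\) with \(X_v\) singular. But \(M_w=\{z\}\), so \(v=z\), and \(X_z\) is smooth by hypothesis. This contradiction shows that \(w\) avoids all six patterns.

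Woo's theorem now applies and gives \(P_{u,w}(q)=1+q^h\) for every \(u\leq z\), with some \(h\geq1\). Setting \(q=1\) yields \(P_{u,w}(1)=2\), which is \eqref{eq:single-branch-kl-values}.

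The main point to check is that Woo's hypotheses are exactly ``\(\operatorname{Sing}(X_w)\) irreducible plus the avoidance of these six patterns,'' with the conclusion stated for every \(u\) in the component, not only for generic points. If his statement instead requires the index of the singular locus to be described in his own combinatorial form, I would use the Core Dictionary (Proposition~\ref{prop:core-dictionary}) to identify \(z\) with Woo's maxsing index. That identification is immediate, since both encode the same Billey--Warrington classification.

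The exact exponent \(h\) is not needed at this stage; it is recovered later from the comparison argument.
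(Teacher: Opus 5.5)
Your proposal is correct and follows the paper's proof essentially step for step. The lci criterion excludes \(632541\), \(463152\), \(526413\); Theorem~\ref{thm:guarded-obstruction}, together with \(M_w=\{z\}\) and the smoothness of \(X_z\), excludes \(653421\), \(546213\), \(465132\); and evaluating Woo's \(P_{u,w}(q)=1+q^h\) at \(q=1\) gives the claim. Your fallback via the Core Dictionary is not needed, since Woo's hypothesis is only that \(\operatorname{Sing}(X_w)\) has a single irreducible component.
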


\begin{proof}
If \(w\) contained \(653421\), \(546213\), or \(465132\),
Theorem~\ref{thm:guarded-obstruction} would give a singular
\(X_v\) with \(v\in M_w\). Since \(M_w=\{z\}\), this would
contradict the smoothness of \(X_z\).
The other three patterns in Woo's theorem were excluded at the start
of this section by the lci criterion.
Thus all six patterns are avoided, and the singular locus has exactly
one irreducible component.
By \cite[Theorem~1.1 and Corollary~1.2, pp.~2--3]{Woo09}, there is an
integer \(h\geq1\) such that \(P_{u,w}(q)=1+q^h\) for every
\(u\leq z\). Evaluation at \(q=1\) proves the claim.
\end{proof}

\subsection{The single-branch comparison theorem}
\label{sec:kl-to-comparison}

Set \(d=\ell(w)\) and \(m=\ell(z)\).
We first use the value at \(e_z\) to determine both the generic rank
and the parity of \(d-m\). The values at all lower fixed points then
exclude additional composition factors.

\begin{proposition}[Generic restriction and parity]
\label{prop:generic-restriction}
Under \eqref{eq:single-branch-hypothesis}, one has
\(N_w|_{C_z}\simeq\mathbb Q_{C_z}[m]\), and \(d-m\) is odd.
\end{proposition}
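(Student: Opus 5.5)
The plan is to work on the open subset where the support of \(N_w\) is a single smooth stratum, identify \(N_w\) there as a shifted local system, and then read off its rank and the parity of \(d-m\) from one Euler characteristic at \(e_z\).

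\emph{Step 1: restrict to an open set.} Let \(\partial X_z=X_z\setminus C_z=\bigcup_{u<z}C_u\); this is closed in \(X_w\). Set \(V=X_w\setminus\partial X_z\), which is open. Restriction to \(V\) is perverse \(t\)-exact, so \(N_w|_V\) is perverse. By Proposition~\ref{prop:comparison-support} and \eqref{eq:single-branch-hypothesis}, its closed support is \(X_z\cap V=C_z\). The cell \(C_z\) is closed in \(V\) and smooth, isomorphic to \(\mathbb C^{m}\). Let \(i:C_z\hookrightarrow V\) be the inclusion. Since \(N_w|_V\) is supported on \(C_z\), we have \(N_w|_V\simeq i_*i^*N_w|_V\), and \(i^*N_w|_V\) is perverse on \(C_z\) (closed pushforward is \(t\)-exact and detects perversity).

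\emph{Step 2: identify the restriction.} Since \(N_w\in\mathcal P_w\) is Bruhat-constructible, \(i^*N_w\) has constant cohomology sheaves along the single stratum \(C_z\). A perverse sheaf on a smooth connected variety whose cohomology sheaves are local systems is a local system placed in degree \(-m\). The cell \(C_z\) is simply connected, so this local system is constant. Hence
\[
N_w|_{C_z}\simeq\mathbb Q_{C_z}^{\,r}[m]
\]
for some \(r\geq1\); here \(r\neq0\) because \(C_z\) lies in the support.

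\emph{Step 3: compute \(r\) and the parity.} The derived stalk of \(N_w\) at \(e_z\) is therefore \(\mathbb Q^r\) in degree \(-m\), so \(\chi_z(N_w)=(-1)^m r\). On the other hand, \eqref{eq:euler-character-comparison-kernel} together with Proposition~\ref{prop:single-branch-kl-values} at \(u=z\) gives
\[
\chi_z(N_w)=(-1)^d\bigl(1-P_{z,w}(1)\bigr)=(-1)^{d+1}.
\]
Comparing the two expressions, \(r=1\) and \((-1)^m=(-1)^{d+1}\). This gives \(N_w|_{C_z}\simeq\mathbb Q_{C_z}[m]\) and shows that \(d-m\) is odd.

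The main point to handle carefully is Step 2, the passage from "perverse and supported on a closed smooth stratum" to "a shifted local system". I would cite the standard characterisation of perverse sheaves on a single smooth stratum in \cite{Ach21}, together with the simple connectivity of the affine cell. The rest of the argument is bookkeeping with Euler characteristics.
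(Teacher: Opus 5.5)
Your proposal is correct and follows essentially the same route as the paper. You write the restriction to \(C_z\) as a shifted constant local system using single-stratum constructibility and simple connectivity, and then compare \(\chi_z(N_w)=(-1)^m r\) with \((-1)^d\bigl(1-P_{z,w}(1)\bigr)=(-1)^{d+1}\) to get \(r=1\) and \(d-m\) odd. The only difference is cosmetic: you pass through the open set \(V=X_w\setminus\partial X_z\), in which \(C_z\) is closed, whereas the paper first regards \(N_w\) as a perverse sheaf on \(X_z\) and then restricts to the open cell.
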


\begin{proof}
By Proposition~\ref{prop:comparison-support}, \(N_w\) is supported on
\(X_z\). The closed embedding \(X_z\hookrightarrow X_w\) therefore
allows us to regard it as a perverse sheaf on \(X_z\)
\cite[Proposition~3.1.10, p.~133]{Ach21}.
Its restriction to the open cell \(C_z\simeq\mathbb C^m\) is perverse
and constructible for that single stratum, hence is \(L[m]\) for a
finite-rank local system \(L\)
\cite[Lemmas~3.1.3--3.1.4, pp.~130--131]{Ach21}.
Since \(C_z\) is simply connected, \(L\) is constant
\cite[Definitions~1.7.1--1.7.2 and Theorem~1.7.9, pp.~41--44]{Ach21}.

At \(e_z\), equations \eqref{eq:fixed-point-euler-character} and
\eqref{eq:euler-character-comparison-kernel}, together with
Proposition~\ref{prop:single-branch-kl-values}, give
\[
    (-1)^m\operatorname{rk}L
    =\chi_z(N_w)
    =(-1)^d\bigl(1-P_{z,w}(1)\bigr)
    =(-1)^{d+1}.
\]
Taking absolute values gives \(\operatorname{rk}L=1\), and comparing
signs shows that \(d-m\) is odd. Thus
\(N_w|_{C_z}\simeq\mathbb Q_{C_z}[m]\).
\end{proof}

Recall that
\(\mathcal P_w=\operatorname{Perv}_{\mathrm{Bruhat}}(X_w;\mathbb Q)\)
has finite length, with simple objects \(IC_v\), \(v\leq w\)
(Section~\ref{sec:perverse-kl}). Consequently, the classes \([IC_v]\)
form a basis of \(K_0(\mathcal P_w)\), and
\[
    [\mathcal F]=\sum_{v\leq w}m_v(\mathcal F)[IC_v],
    \qquad m_v(\mathcal F)\in\mathbb Z_{\geq0},
\]
where the coefficients are Jordan--H\"older multiplicities
\cite[Definition~A.3.22 and Theorem~A.3.23, p.~479;
Definition~A.9.3, p.~508]{Ach21}.

Write \([e,w]=\{u\in S_n:u\leq w\}\), where \(e\) is the identity
permutation. The fixed-point Euler characteristics from
\eqref{eq:fixed-point-euler-character} are additive in short exact
sequences, so they define a group homomorphism
\begin{equation}
    \chi:K_0(\mathcal P_w)\longrightarrow\mathbb Z^{[e,w]},
    \qquad
    [\mathcal F]\longmapsto
    \bigl(\chi_u(\mathcal F)\bigr)_{u\in[e,w]},
    \label{eq:euler-character-map}
\end{equation}
where \(\mathbb Z^{[e,w]}\) denotes the group of integer-valued
functions on the finite set \([e,w]\).

The following lemma shows that these Euler characteristics determine
the multiplicities \(m_v(\mathcal F)\).

\begin{lemma}[Euler-character triangularity]
\label{lem:euler-character-triangularity}
The homomorphism \(\chi\) is injective.
\end{lemma}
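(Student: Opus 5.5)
The plan is to compute the images \(\chi([IC_v])\) of the basis of \(K_0(\mathcal P_w)\) and show that, in a suitable order, they form a unitriangular matrix (up to sign). Since the classes \([IC_v]\) with \(v\leq w\) form a \(\mathbb Z\)-basis of \(K_0(\mathcal P_w)\), injectivity of \(\chi\) reduces to showing that the vectors \(\chi([IC_v])\in\mathbb Z^{[e,w]}\) are linearly independent.

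\textbf{Support condition.} Fix \(v\leq w\) and regard \(IC_v\) on \(X_w\) by closed pushforward. For \(u\leq w\), the stalk \(i_u^*IC_v\) vanishes unless \(e_u\in X_v\), that is, unless \(u\leq v\) by \eqref{eq:schubert-dictionary}. Hence \(\chi_u(IC_v)=0\) for \(u\not\leq v\).

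\textbf{Diagonal entry.} On the open cell, \(IC_v|_{C_v}\simeq\mathbb Q_{C_v}[\ell(v)]\). Therefore
\[
\chi_v(IC_v)=(-1)^{\ell(v)}.
\]
More generally, \eqref{eq:kl-normalization} applied to \(X_v\) gives \(\chi_u(IC_v)=(-1)^{\ell(v)}P_{u,v}(1)\). This explicit formula is not needed; only the vanishing and the diagonal value are.

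\textbf{Triangularity argument.} Suppose \(\sum_v n_v[IC_v]\) lies in \(\ker\chi\) and not all \(n_v\) vanish. Choose \(v_0\) Bruhat-maximal among the \(v\) with \(n_v\neq0\), and evaluate at \(u=v_0\). A term with \(n_v\neq0\) contributes to \(\chi_{v_0}\) only if \(v_0\leq v\). By maximality of \(v_0\), this forces \(v=v_0\). We therefore obtain
\[
0=n_{v_0}(-1)^{\ell(v_0)},
\]
which contradicts \(n_{v_0}\neq0\). So \(\ker\chi=0\).

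Equivalently, after ordering \([e,w]\) by a linear extension of Bruhat order, the matrix of \(\chi\) is triangular with diagonal entries \(\pm1\).

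\textbf{Main point requiring care.} The only delicate step is the vanishing of stalks off \(X_v\) under closed pushforward, together with the normalisation on the dense cell. Both are immediate from the definitions and \eqref{eq:schubert-cell-closure}. Nothing else is expected to be an obstacle.
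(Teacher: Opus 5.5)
Your proposal is correct and is essentially the paper's own proof. You use the same ingredients: \(\chi_u(IC_v)=0\) for \(u\not\leq v\) by the support condition, the diagonal value \(\chi_v(IC_v)=(-1)^{\ell(v)}\) from the normalisation on \(C_v\), and evaluation of a nonzero kernel element at a Bruhat-maximal index with nonzero coefficient.
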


\begin{proof}
The support and normalisation of \(IC_v\) give
\[
    \chi_u(IC_v)=0\quad(u\not\leq v),
    \qquad \chi_v(IC_v)=(-1)^{\ell(v)}.
\]
Indeed, \(\operatorname{supp}IC_v=X_v\), while
\(IC_v|_{C_v}\simeq\mathbb Q_{C_v}[\ell(v)]\).
If \(\alpha=\sum_{v\leq w}a_v[IC_v]\) lies in the kernel of \(\chi\)
and is nonzero, choose a Bruhat-maximal \(v\) with \(a_v\neq0\).
Only indices \(x\geq v\) contribute to \(\chi_v(\alpha)\), and
maximality eliminates all except \(v\). Thus
\[
    0=\chi_v(\alpha)=a_v(-1)^{\ell(v)},
\]
a contradiction. Hence \(\chi\) is injective.
\end{proof}

\begin{theorem}[Single-branch comparison theorem]
\label{thm:single-branch-comparison}
Under \eqref{eq:single-branch-hypothesis}, one has \(N_w\simeq IC_z\).
\end{theorem}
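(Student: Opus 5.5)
The plan is to compute the class \([N_w]\in K_0(\mathcal P_w)\) via the Euler-character map \eqref{eq:euler-character-map}, and then upgrade the equality of classes to an isomorphism.

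\emph{Step 1: compute \(\chi(N_w)\).} Since \(\operatorname{supp}N_w=X_z\) by Proposition~\ref{prop:comparison-support}, we have \(\chi_u(N_w)=0\) for \(u\not\leq z\). For \(u\leq z\), equation \eqref{eq:euler-character-comparison-kernel} and Proposition~\ref{prop:single-branch-kl-values} give
\[
  \chi_u(N_w)=(-1)^d\bigl(1-P_{u,w}(1)\bigr)=(-1)^{d+1}.
\]
By Proposition~\ref{prop:generic-restriction}, \(d-m\) is odd, so this equals \((-1)^m\).

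\emph{Step 2: compute \(\chi(IC_z)\).} Because \(X_z\) is smooth, \(IC_z=\mathbb Q_{X_z}[m]\). Its stalk at each \(e_u\) with \(u\leq z\) is \(\mathbb Q[m]\), so \(\chi_u(IC_z)=(-1)^m\). It vanishes off \(X_z\). Hence \(\chi(N_w)=\chi([IC_z])\), and Lemma~\ref{lem:euler-character-triangularity} gives \([N_w]=[IC_z]\) in \(K_0(\mathcal P_w)\).

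\emph{Step 3: pass from the class to the object.} The Jordan--H\"older multiplicities of \(N_w\) are therefore \(m_z=1\) and \(m_v=0\) for \(v\neq z\). So \(N_w\) has length one, i.e.\ it is simple. A simple object of \(\mathcal P_w\) whose class is \([IC_z]\) must be isomorphic to \(IC_z\), because the classes of the simples \(IC_v\) form a basis and each simple appears as exactly one basis element. This avoids any extension-class issue.

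Alternatively, one can argue directly from length one: \(N_w\) is simple with support \(X_z\) and generic restriction \(\mathbb Q_{C_z}[m]\) (Proposition~\ref{prop:generic-restriction}). It is therefore the intermediate extension of that local system, namely \(IC_z\).

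The main obstacle has already been cleared: the substantive input is \(P_{u,w}(1)=2\) on all of \([e,z]\), not just on \(C_z\). The step to check carefully is the sign bookkeeping in Step 1. There the parity of \(d-m\) from Proposition~\ref{prop:generic-restriction} is exactly what makes the sign \((-1)^{d+1}\) match \((-1)^m\). If the sign failed to match, the class would be \(-[IC_z]\), which is impossible for an object of \(\mathcal P_w\). A second point to verify is that \(IC_z\), pushed forward to \(X_w\), is the simple object indexed by \(z\) in \(\mathcal P_w\). This holds by the description of simples recalled in Section~\ref{sec:perverse-kl}.
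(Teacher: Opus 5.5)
Your proposal is correct and follows essentially the same route as the paper: compute \(\chi_u(N_w)\) from \(P_{u,w}(1)=2\) together with the parity of \(d-m\) from Proposition~\ref{prop:generic-restriction}, and match it with \(\chi_u(IC_z)\). From there you, like the paper, invoke Euler-character triangularity to get \([N_w]=[IC_z]\), then use nonnegativity of Jordan--H\"older multiplicities to conclude that \(N_w\) has length one and is isomorphic to \(IC_z\); your alternative ending via the intermediate extension of the generic restriction is also valid but not needed.
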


\begin{proof}
For \(u\leq z\), Proposition~\ref{prop:single-branch-kl-values} and
\eqref{eq:euler-character-comparison-kernel} give
\(\chi_u(N_w)=(-1)^{d+1}=(-1)^m\), where the last equality follows
from Proposition~\ref{prop:generic-restriction}.
For \(u\not\leq z\), Proposition~\ref{prop:comparison-support} gives
\(\chi_u(N_w)=0\).
Since \(X_z\) is smooth, \(IC_z\) is
\(\mathbb Q_{X_z}[m]\), viewed on \(X_w\) through the closed
inclusion. Thus
\begin{equation}
    \chi_u(N_w)=\chi_u(IC_z)=
    \begin{cases}
        (-1)^m,&u\leq z,\\
        0,&u\not\leq z.
    \end{cases}
    \label{eq:euler-character-Nw-single-branch}
\end{equation}
Lemma~\ref{lem:euler-character-triangularity} yields
\begin{equation}
    [N_w]=[IC_z]\quad\text{in }K_0(\mathcal P_w).
    \label{eq:single-branch-K0-equality}
\end{equation}

Since the classes \([IC_v]\) form a basis and all composition
multiplicities of \(N_w\) are nonnegative,
\eqref{eq:single-branch-K0-equality} says that \(N_w\) has precisely
one composition factor, namely \(IC_z\). It therefore has length one
and is isomorphic to \(IC_z\). The use of a single simple class is
essential: equality in \(K_0\) alone does not determine extension data.
\end{proof}

The comparison proof used only \(P_{u,w}(1)=2\). We now recover the
full Kazhdan--Lusztig polynomial and express its exponent in terms of
\(\operatorname{codim}_{X_w}X_z\).

\begin{corollary}[Single-branch Kazhdan--Lusztig constancy]
\label{cor:single-branch-kl-constancy}
Under \eqref{eq:single-branch-hypothesis}, one has
\begin{equation}
    P_{u,w}(q)=P_{z,w}(q)
    =1+q^{(\ell(w)-\ell(z)-1)/2}
    \qquad(u\leq z).
    \label{eq:single-branch-kl-constancy}
\end{equation}
In particular, \(\ell(w)-\ell(z)\geq3\).
\end{corollary}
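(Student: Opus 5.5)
We already know from Proposition~\ref{prop:single-branch-kl-values}, via Woo's theorem, that \(P_{u,w}(q)=1+q^h\) for every \(u\leq z\), with a single integer \(h\geq1\) independent of \(u\). So the first equality in \eqref{eq:single-branch-kl-constancy} is already available. The only remaining task is to pin down \(h\), and we will do this from the stalk of \(IC_w\) at \(e_z\).

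\emph{Step 1: compute the stalk.} Restrict the comparison sequence \eqref{eq:schubert-comparison-sequence} to \(e_z\) and take the long exact sequence of stalk cohomology of the distinguished triangle
\[
N_w\to\mathbb Q_{X_w}[d]\to IC_w\to N_w[1].
\]
Theorem~\ref{thm:single-branch-comparison} gives \(N_w\simeq IC_z=\mathbb Q_{X_z}[m]\), so the stalk \(i_z^*N_w\) is \(\mathbb Q\) concentrated in degree \(-m\). The stalk \(i_z^*\mathbb Q_{X_w}[d]\) is \(\mathbb Q\) in degree \(-d\). Since \(d>m\), the long exact sequence gives
\[
H^j(i_z^*IC_w)=\begin{cases}\mathbb Q,& j=-d,\\ \mathbb Q,& j=-m-1,\\ 0,&\text{otherwise}.\end{cases}
\]
More precisely, \(H^{-m}(i_z^*N_w)\to H^{-m}(i_z^*\mathbb Q_{X_w}[d])=0\), so the class in degree \(-m\) survives as \(H^{-m-1}(i_z^*IC_w)\) through the connecting map. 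In degree \(-d\), the map \(H^{-d}(i_z^*\mathbb Q_{X_w}[d])\to H^{-d}(i_z^*IC_w)\) is an isomorphism, because \(H^{-d}\) and \(H^{-d+1}\) of \(i_z^*N_w\) vanish. For this we need \(-m\neq -d\) and \(-m\neq -d+1\). The second is guaranteed once we know \(d-m\geq2\), and that follows from oddness (Proposition~\ref{prop:generic-restriction}) together with \(d>m\).

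\emph{Step 2: read off the exponent.} Comparing with the normalisation \eqref{eq:kl-normalization}, the degree \(-m-1\) corresponds to \(-d+2r\) with \(r=(d-m-1)/2\). This is an integer because \(d-m\) is odd, which is consistent with the parity vanishing of \(IC\) stalks. Hence \(P_{z,w}(q)=1+q^{(d-m-1)/2}\). Combined with \(P_{u,w}=1+q^h\), this gives \(h=(d-m-1)/2\).

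\emph{Step 3: the length bound.} Since \(h\geq1\) (equivalently, the degree condition \(\deg P_{z,w}\leq (d-m-1)/2\) cannot force \(h=0\), because \(P_{z,w}(1)=2\) rules out \(P_{z,w}=2\)), we get \(d-m\geq3\). Alternatively, \(d-m=1\) would give \(P_{z,w}=2\), contradicting \(P_{z,w}(0)=1\).

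The only delicate point is the bookkeeping of degrees in the long exact sequence. In particular, we must make sure that the connecting map, rather than cancellation, produces the degree \(-m-1\) class. This is exactly where we use \(d-m\) odd and \(d-m\geq 2\).
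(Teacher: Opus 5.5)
Your route is the paper's route. You take stalks at \(e_z\) in the triangle \(N_w\to\mathbb Q_{X_w}[d]\to IC_w\to N_w[1]\), use \(i_z^*N_w\simeq\mathbb Q[m]\) from Theorem~\ref{thm:single-branch-comparison}, and match degrees with \eqref{eq:kl-normalization}. However, one inference in Step~1 is false as written: oddness of \(d-m\) together with \(d>m\) does not give \(d-m\geq2\), because \(d-m=1\) is odd. This is exactly the case in which your stalk computation breaks down. If \(d-m=1\), the degree \(-m\) class of \(i_z^*N_w\) falls into the exact segment around degree \(-d\). The long exact sequence then gives \(\dim H^{-d}(i_z^*IC_w)=2\), with every other degree zero, so your displayed stalk is wrong in that case. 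The parenthetical in Step~3 is also wrong: \(P_{z,w}(1)=2\) does not rule out the constant polynomial \(2\). Only \(P_{z,w}(0)=1\), or equivalently \(h\geq1\), does.

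The repair is already in your ``Alternatively'' sentence, but it must come before Step~1, since Step~1 depends on it. Because \(P_{z,w}=1+q^h\) with \(h\geq1\), you have \(H^{-d}(i_z^*IC_w)\cong\mathbb Q\). Consider the exact segment
\(0\to H^{-d}(i_z^*\mathbb Q_{X_w}[d])\to H^{-d}(i_z^*IC_w)\to H^{-d+1}(i_z^*N_w)\to0\).
Here the first nonzero map is an injection \(\mathbb Q\to\mathbb Q\), hence an isomorphism. Therefore \(H^{-d+1}(i_z^*N_w)=0\), which means \(d-m\neq1\), and your Step~1 then goes through.

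The paper avoids this case split by starting from the known nonzero degree. Set \(k=-d+2h>-d\). The stalk of \(\mathbb Q_{X_w}[d]\) vanishes in degrees \(k\) and \(k+1\), so \(H^k(i_z^*IC_w)\cong H^{k+1}(i_z^*N_w)\neq0\), which forces \(k+1=-m\). This gives \(h=(d-m-1)/2\) and \(d-m=2h+1\geq3\) at once. It needs neither the oddness from Proposition~\ref{prop:generic-restriction} nor any prior lower bound on \(d-m\).
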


\begin{proof}
By Proposition~\ref{prop:single-branch-kl-values} and
\cite[Corollary~1.2, p.~3]{Woo09}, there is an integer \(h\geq1\)
with \(P_{u,w}(q)=1+q^h\) for every \(u\leq z\).
Put \(k=-d+2h>-d\). The normalisation \eqref{eq:kl-normalization}
gives \(H^k(i_z^*IC_w)\simeq\mathbb Q\).

Apply \(i_z^*\) to the triangle associated with the comparison sequence:
\[
    N_w\longrightarrow\mathbb Q_{X_w}[d]
    \longrightarrow IC_w\longrightarrow N_w[1].
\]
The stalk \(\mathbb Q[d]\) has cohomology only in degree \(-d\),
so its groups in degrees \(k\) and \(k+1\) vanish. The long exact
sequence therefore gives
\[
    H^k(i_z^*IC_w)\simeq H^{k+1}(i_z^*N_w)\neq0.
\]
By Theorem~\ref{thm:single-branch-comparison} and smoothness of \(X_z\),
\(i_z^*N_w\simeq\mathbb Q[m]\). Hence \(k+1=-m\), so
\[
    -d+2h+1=-m,
    \qquad h=\frac{d-m-1}{2}.
\]
Substitution gives the stated formula, and \(h\geq1\) implies
\(d-m\geq3\).
\end{proof}

\begin{proof}[Proof of Theorem~\ref{thm:type-a-comparison}]
Proposition~\ref{prop:single-branch-reduction} gives \(|M_w|\leq1\).
If \(M_w=\varnothing\), then \(X_w\) is smooth and the comparison
morphism is an isomorphism, so \(N_w=0\). If \(M_w=\{z\}\),
the rational smoothness assumption and
\eqref{eq:type-a-rational-smoothness} give that \(X_z\) is smooth.
Theorem~\ref{thm:single-branch-comparison} then gives \(N_w\simeq IC_z\).
\end{proof}

\section{Further questions}
\label{sec:conclusions}

The natural next question is to determine the comparison kernel for a
reducible singular locus. Within the present approach, this requires
Kazhdan--Lusztig information sufficient to identify composition factors
supported on component intersections and smaller strata. Even after
their multiplicities are determined, extension data remain to be
controlled: equality of Grothendieck classes alone does not identify
the object. The single-branch theorem supplies no general splitting
principle, and the codimension hypothesis in
Theorem~\ref{thm:type-a-comparison} excludes the multi-component case
rather than resolving it.

\section*{Acknowledgements}
The author thanks his supervisor, Jay, for his guidance in algebraic
groups and geometric representation theory, his advice on the direction
of this research, and his continuing time, patience, and support.

\section*{Declarations}
\paragraph*{Use of artificial intelligence.}
OpenAI's GPT-5.6 Pro, operating in chat mode, was used to assist the
development, checking, and exposition of the proofs in
Sections~\ref{chap:contact-rigidity} and
\ref{chap:comparison-application}, as well as the design and verification
of the computer-assisted finite classifications and associated code.
AI assistance was also used for editorial condensation and \LaTeX{}
formatting. The author checked the mathematical statements, proofs,
computations, code, outputs, citations, and final text, and accepts full
responsibility for their correctness.

\paragraph*{Data and code availability.}
The finite classification certificates are included in
Appendix~\ref{app:finite-certificates}. The verifier and its recorded
output accompany the article as ancillary files; reproduction
instructions are given in
Appendix~\ref{app:contact-code}. No external dataset is required.

\ifdefempty{\FundingStatement}{}{\paragraph*{Funding.}\FundingStatement}
\ifdefempty{\CompetingInterestsStatement}{}{\paragraph*{Competing interests.}\CompetingInterestsStatement}

\appendix

\section{Finite classification and cross-inheritance certificates}
\label{app:finite-certificates}

This appendix gives the finite certificates used in
Section~\ref{chap:contact-rigidity}. Their completeness follows from
Theorem~\ref{thm:finite-signature-reduction} and
Proposition~\ref{prop:finite-decision}.
A label such as \(1245Q\) denotes a \(Q\)-core on horizontal ranks
\(1,2,4,5\); \(351624@123457\) means that the subsequence on the listed
horizontal ranks flattens to \(351624\). The sets \(U(C)\) and
\(H_{\mathrm{res}}(C)\) are defined in
\eqref{eq:cross-unresolved-residual}.

\Needspace{14\baselineskip}
\subsection*{One-point overlap}

Exactly eight signatures have two locally admissible cores and two
smooth restricted lowerings. Each violates the lci predicate, as certified
below; hence there is no lci survivor.

\begin{center}
\small
\begin{tabular}{@{}llll@{}}
\toprule
\(\pi\) & \(C_1\) & \(C_2\) & forbidden certificate \\
\midrule
4617235 & 1235Q & 2467Q & \(351624@123457\) \\
4617235 & 1236Q & 2457Q & \(351624@123457\) \\
6275134 & 1245P & 1367Q & \(42513@12356\) \\
5741263 & 1245Q & 2367P & \(35142@12467\) \\
3561724 & 1246Q & 3567Q & \(351624@124567\) \\
5267413 & 1256P & 3467Q & \(42513@12367\) \\
4573162 & 1257Q & 3467P & \(35142@13567\) \\
3561724 & 1346Q & 2567Q & \(351624@124567\) \\
\bottomrule
\end{tabular}
\end{center}

\subsection*{Complete two-point cross table}

For each two-point signature, the cross candidates are the sets
\(K\subseteq[6]\) with \(|K|=4\),
\(|K\cap I_1|=|K\cap I_2|=3\), and
\(\operatorname{Core}(\pi,K,Q)\), including the empty-region and
decreasing-chain tests. Select a candidate by minimising \(|U(K)|\),
then breaking ties successively by preferring a restricted lowering
containing \(3412\) or \(4231\), preferring
\(H_{\mathrm{res}}(K)\subseteq H(C_1)\cap H(C_2)\), and taking the
lexicographically smallest position set. This rule makes the selection
reproducible; admissibility follows from the inheritance certificates
and the two extremal repairs.

Tables~\ref{tab:complete-two-point-cross-structural}
and~\ref{tab:complete-two-point-cross-certificates} record all twenty
overlap-two lci signatures and their cross-candidate data.

In Table~\ref{tab:complete-two-point-cross-certificates}, the local
bad-pattern certificate refers to the restricted lowering in the
corresponding row of Table~\ref{tab:complete-two-point-cross-structural};
a dash means that this lowering avoids both \(3412\) and \(4231\).

\begingroup
\small
\renewcommand{\arraystretch}{1.05}
\begin{longtable}{@{}rlllll@{}}
\caption{Two-point signatures, cross candidates, and restricted lowerings.}
\label{tab:complete-two-point-cross-structural}\\
\toprule
No. & \(\pi\) & \(C_1\) & \(C_2\) & \(C\) & lowering \\
\midrule
\endfirsthead
\multicolumn{6}{c}{\tablename~\thetable\ (continued)}\\
\toprule
No. & \(\pi\) & \(C_1\) & \(C_2\) & \(C\) & lowering \\
\midrule
\endhead
\bottomrule
\endfoot
1 & 561234 & 1235Q & 1246Q & 1234Q & 152634 \\
2 & 451623 & 1235Q & 1456Q & 1256Q & 241635 \\
3 & 534612 & 1235P & 2456Q & 2356Q & 513624 \\
4 & 561342 & 1235Q & 2456P & 1245Q & 351462 \\
5 & 451623 & 1235Q & 2456Q & 1256Q & 241635 \\
6 & 561234 & 1236Q & 1245Q & 1234Q & 152634 \\
7 & 562314 & 1236Q & 1345P & 1234Q & 253614 \\
8 & 451623 & 1236Q & 1456Q & 1256Q & 241635 \\
9 & 451623 & 1236Q & 2456Q & 1256Q & 241635 \\
10 & 356124 & 1245Q & 2346Q & 2345Q & 315264 \\
11 & 356124 & 1245Q & 2356Q & 2345Q & 315264 \\
12 & 456123 & 1246Q & 1345Q & 1245Q & 146253 \\
13 & 456123 & 1246Q & 1356Q & 1256Q & 246135 \\
14 & 356124 & 1345Q & 2346Q & 2345Q & 315264 \\
15 & 456123 & 1345Q & 2346Q & 2345Q & 415263 \\
16 & 356124 & 1345Q & 2356Q & 2345Q & 315264 \\
17 & 456123 & 1356Q & 2346Q & 2356Q & 425136 \\
18 & 345612 & 1356Q & 2456Q & 1256Q & 135624 \\
19 & 364512 & 1456Q & 2346P & 3456Q & 361425 \\
20 & 345612 & 1456Q & 2356Q & 1256Q & 135624 \\
\end{longtable}

\Needspace{26\baselineskip}
\begin{longtable}{@{}rllll@{}}
\caption{Local bad patterns and cell certificates for the two-point signatures.}
\label{tab:complete-two-point-cross-certificates}\\
\toprule
No. & local bad pattern & \(U(C)\) & \(H_{\mathrm{res}}(C)\) & inherited? \\
\midrule
\endfirsthead
\multicolumn{5}{c}{\tablename~\thetable\ (continued)}\\
\toprule
No. & local bad pattern & \(U(C)\) & \(H_{\mathrm{res}}(C)\) & inherited? \\
\midrule
\endhead
\bottomrule
\endfoot
1 & \(3412@2456\) & \(\varnothing\) & \(\{\square_{24}\}\) & yes \\
2 & \(\text{--}\) & \(\varnothing\) & \(\varnothing\) & yes \\
3 & \(3412@1456\) & \(\varnothing\) & \(\varnothing\) & yes \\
4 & \(3412@1236\) & \(\varnothing\) & \(\varnothing\) & yes \\
5 & \(\text{--}\) & \(\varnothing\) & \(\varnothing\) & yes \\
6 & \(3412@2456\) & \(\{\square_{33}\}\) & \(\{\square_{24}\}\) & yes \\
7 & \(3412@2456\) & \(\varnothing\) & \(\varnothing\) & yes \\
8 & \(\text{--}\) & \(\varnothing\) & \(\varnothing\) & yes \\
9 & \(\text{--}\) & \(\varnothing\) & \(\varnothing\) & yes \\
10 & \(\text{--}\) & \(\varnothing\) & \(\varnothing\) & yes \\
11 & \(\text{--}\) & \(\varnothing\) & \(\varnothing\) & yes \\
12 & \(3412@2346\) & \(\varnothing\) & \(\{\square_{33}\}\) & yes \\
13 & \(3412@2345\) & \(\varnothing\) & \(\{\square_{33}\}\) & yes \\
14 & \(\text{--}\) & \(\varnothing\) & \(\varnothing\) & yes \\
15 & \(3412@1346\) & \(\varnothing\) & \(\{\square_{33}\}\) & yes \\
16 & \(\text{--}\) & \(\varnothing\) & \(\varnothing\) & yes \\
17 & \(3412@1345\) & \(\varnothing\) & \(\{\square_{33}\}\) & yes \\
18 & \(3412@3456\) & \(\varnothing\) & \(\{\square_{42}\}\) & yes \\
19 & \(3412@1235\) & \(\varnothing\) & \(\varnothing\) & yes \\
20 & \(3412@3456\) & \(\{\square_{33}\}\) & \(\{\square_{42}\}\) & yes \\
\end{longtable}
\endgroup

Only rows~6 and~20 have an unresolved required cell, repaired in
Lemmas~\ref{lem:repair-row-6} and~\ref{lem:repair-row-20}.
The final column of Table~\ref{tab:complete-two-point-cross-certificates}
records \(H_{\mathrm{res}}(C)\subseteq H(C_1)\cap H(C_2)\).

\subsection*{Rectangle-by-rectangle inheritance certificates}

An entry \(\square_{ij}\mapsto Q_2(C_1)/P_R(C_2)\) means that
both named original regions contain the candidate cell; either
containment forces emptiness. The symbol \(\mathsf U\) marks a cell
not inherited from an original required rectangle and occurs only in
rows~6 and~20.

\begingroup
\small
\newenvironment{certificateitems}{%
  \begin{list}{}{%
    \setlength{\leftmargin}{3.8em}%
    \setlength{\labelwidth}{3.2em}%
    \setlength{\labelsep}{0.6em}%
    \setlength{\itemindent}{0pt}%
    \setlength{\topsep}{0.2em}%
    \setlength{\partopsep}{0pt}%
    \setlength{\itemsep}{0pt}%
    \setlength{\parsep}{0pt}%
    \renewcommand{\makelabel}[1]{##1\hfil}}%
}{\end{list}}
\par\noindent\begin{minipage}{\linewidth}
\paragraph*{Row 1: \(561234;\ 1235Q,1246Q\Rightarrow 1234Q\).}
\begin{certificateitems}
\item[\(Q_1\)] \(\square_{12} \mapsto Q_{A_1}(C_1)/Q_{A_1}(C_2)\), \(\square_{13} \mapsto Q_1(C_1)/Q_{A_1}(C_2)\), \(\square_{14} \mapsto Q_1(C_1)/Q_1(C_2)\).
\item[\(Q_2\)] \(\square_{25} \mapsto Q_2(C_1)/Q_2(C_2)\).
\item[\(Q_3\)] \(\square_{32} \mapsto Q_4(C_2)\), \(\square_{33} \mapsto Q_3(C_1)/Q_4(C_2)\), \(\square_{34} \mapsto Q_3(C_1)\).
\item[\(Q_4\)] \(\square_{21} \mapsto Q_4(C_1)\).
\item[\(Q_{A_1}\)] \(\square_{11} \mapsto Q_{A_1}(C_1)\).
\item[\(Q_{A_2}\)] \(\square_{35} \mapsto Q_{A_2}(C_1)/Q_2(C_2)\).
\end{certificateitems}
\end{minipage}\par\smallskip

\par\noindent\begin{minipage}{\linewidth}
\paragraph*{Row 2: \(451623;\ 1235Q,1456Q\Rightarrow 1256Q\).}
\begin{certificateitems}
\item[\(Q_1\)] \(\square_{13} \mapsto Q_1(C_1)/Q_1(C_2)\).
\item[\(Q_2\)] \(\square_{24} \mapsto Q_2(C_1)\), \(\square_{34} \mapsto Q_{A_2}(C_1)\), \(\square_{44} \mapsto Q_{A_2}(C_1)/Q_2(C_2)\).
\item[\(Q_3\)] \(\square_{53} \mapsto Q_3(C_2)\).
\item[\(Q_4\)] \(\square_{22} \mapsto Q_{A_1}(C_2)\), \(\square_{32} \mapsto Q_3(C_1)/Q_{A_1}(C_2)\), \(\square_{42} \mapsto Q_3(C_1)/Q_4(C_2)\).
\item[\(Q_{A_1}\)] \(\square_{12} \mapsto Q_1(C_1)/Q_{A_1}(C_2)\).
\item[\(Q_{A_2}\)] \(\square_{54} \mapsto Q_{A_2}(C_2)\).
\end{certificateitems}
\end{minipage}\par\smallskip

\par\noindent\begin{minipage}{\linewidth}
\paragraph*{Row 3: \(534612;\ 1235P,2456Q\Rightarrow 2356Q\).}
\begin{certificateitems}
\item[\(Q_1\)] \(\square_{22} \mapsto P_C(C_1)/Q_1(C_2)\).
\item[\(Q_2\)] \(\square_{33} \mapsto P_R(C_1)\), \(\square_{43} \mapsto P_R(C_1)/Q_2(C_2)\).
\item[\(Q_3\)] \(\square_{52} \mapsto Q_3(C_2)\).
\item[\(Q_4\)] \(\square_{31} \mapsto P_R(C_1)/Q_{A_1}(C_2)\), \(\square_{41} \mapsto P_R(C_1)/Q_4(C_2)\).
\item[\(Q_{A_1}\)] \(\square_{21} \mapsto P_C(C_1)/Q_{A_1}(C_2)\).
\item[\(Q_{A_2}\)] \(\square_{53} \mapsto Q_{A_2}(C_2)\).
\end{certificateitems}
\end{minipage}\par\smallskip

\par\noindent\begin{minipage}{\linewidth}
\paragraph*{Row 4: \(561342;\ 1235Q,2456P\Rightarrow 1245Q\).}
\begin{certificateitems}
\item[\(Q_1\)] \(\square_{14} \mapsto Q_1(C_1)\).
\item[\(Q_2\)] \(\square_{25} \mapsto Q_2(C_1)/P_L(C_2)\), \(\square_{35} \mapsto Q_{A_2}(C_1)/P_L(C_2)\).
\item[\(Q_3\)] \(\square_{44} \mapsto Q_3(C_1)/P_C(C_2)\).
\item[\(Q_4\)] \(\square_{23} \mapsto Q_4(C_1)/P_L(C_2)\), \(\square_{33} \mapsto P_L(C_2)\).
\item[\(Q_{A_1}\)] \(\square_{13} \mapsto Q_{A_1}(C_1)\).
\item[\(Q_{A_2}\)] \(\square_{45} \mapsto Q_{A_2}(C_1)/P_C(C_2)\).
\end{certificateitems}
\end{minipage}\par\smallskip

\par\noindent\begin{minipage}{\linewidth}
\paragraph*{Row 5: \(451623;\ 1235Q,2456Q\Rightarrow 1256Q\).}
\begin{certificateitems}
\item[\(Q_1\)] \(\square_{13} \mapsto Q_1(C_1)\).
\item[\(Q_2\)] \(\square_{24} \mapsto Q_2(C_1)/Q_1(C_2)\), \(\square_{34} \mapsto Q_{A_2}(C_1)/Q_1(C_2)\), \(\square_{44} \mapsto Q_{A_2}(C_1)\).
\item[\(Q_3\)] \(\square_{53} \mapsto Q_3(C_2)\).
\item[\(Q_4\)] \(\square_{22} \mapsto Q_{A_1}(C_2)\), \(\square_{32} \mapsto Q_3(C_1)/Q_{A_1}(C_2)\), \(\square_{42} \mapsto Q_3(C_1)/Q_4(C_2)\).
\item[\(Q_{A_1}\)] \(\square_{12} \mapsto Q_1(C_1)\).
\item[\(Q_{A_2}\)] \(\square_{54} \mapsto Q_3(C_2)\).
\end{certificateitems}
\end{minipage}\par\smallskip

\par\noindent\begin{minipage}{\linewidth}
\paragraph*{Row 6: \(561234;\ 1236Q,1245Q\Rightarrow 1234Q\).}
\begin{certificateitems}
\item[\(Q_1\)] \(\square_{12} \mapsto Q_{A_1}(C_1)/Q_{A_1}(C_2)\), \(\square_{13} \mapsto Q_{A_1}(C_1)/Q_1(C_2)\), \(\square_{14} \mapsto Q_1(C_1)/Q_1(C_2)\).
\item[\(Q_2\)] \(\square_{25} \mapsto Q_2(C_1)/Q_2(C_2)\).
\item[\(Q_3\)] \(\square_{32} \mapsto Q_4(C_2)\), \(\square_{33} \mapsto\mathsf{U}\), \(\square_{34} \mapsto Q_3(C_1)\).
\item[\(Q_4\)] \(\square_{21} \mapsto Q_4(C_1)\).
\item[\(Q_{A_1}\)] \(\square_{11} \mapsto Q_{A_1}(C_1)\).
\item[\(Q_{A_2}\)] \(\square_{35} \mapsto Q_{A_2}(C_1)/Q_2(C_2)\).
\end{certificateitems}
\end{minipage}\par\smallskip
The unresolved cell is repaired by the leftmost construction in
Lemma~\ref{lem:repair-row-6}.

\par\noindent\begin{minipage}{\linewidth}
\paragraph*{Row 7: \(562314;\ 1236Q,1345P\Rightarrow 1234Q\).}
\begin{certificateitems}
\item[\(Q_1\)] \(\square_{13} \mapsto Q_{A_1}(C_1)/P_L(C_2)\), \(\square_{14} \mapsto Q_1(C_1)/P_L(C_2)\).
\item[\(Q_2\)] \(\square_{25} \mapsto Q_2(C_1)\).
\item[\(Q_3\)] \(\square_{33} \mapsto P_C(C_2)\), \(\square_{34} \mapsto Q_3(C_1)/P_C(C_2)\).
\item[\(Q_4\)] \(\square_{22} \mapsto Q_4(C_1)/P_L(C_2)\).
\item[\(Q_{A_1}\)] \(\square_{12} \mapsto Q_{A_1}(C_1)/P_L(C_2)\).
\item[\(Q_{A_2}\)] \(\square_{35} \mapsto Q_{A_2}(C_1)\).
\end{certificateitems}
\end{minipage}\par\smallskip

\par\noindent\begin{minipage}{\linewidth}
\paragraph*{Row 8: \(451623;\ 1236Q,1456Q\Rightarrow 1256Q\).}
\begin{certificateitems}
\item[\(Q_1\)] \(\square_{13} \mapsto Q_1(C_1)/Q_1(C_2)\).
\item[\(Q_2\)] \(\square_{24} \mapsto Q_2(C_1)\), \(\square_{34} \mapsto Q_{A_2}(C_1)\), \(\square_{44} \mapsto Q_{A_2}(C_1)/Q_2(C_2)\).
\item[\(Q_3\)] \(\square_{53} \mapsto Q_3(C_1)/Q_3(C_2)\).
\item[\(Q_4\)] \(\square_{22} \mapsto Q_4(C_1)/Q_{A_1}(C_2)\), \(\square_{32} \mapsto Q_{A_1}(C_2)\), \(\square_{42} \mapsto Q_4(C_2)\).
\item[\(Q_{A_1}\)] \(\square_{12} \mapsto Q_{A_1}(C_1)/Q_{A_1}(C_2)\).
\item[\(Q_{A_2}\)] \(\square_{54} \mapsto Q_{A_2}(C_1)/Q_{A_2}(C_2)\).
\end{certificateitems}
\end{minipage}\par\smallskip

\par\noindent\begin{minipage}{\linewidth}
\paragraph*{Row 9: \(451623;\ 1236Q,2456Q\Rightarrow 1256Q\).}
\begin{certificateitems}
\item[\(Q_1\)] \(\square_{13} \mapsto Q_1(C_1)\).
\item[\(Q_2\)] \(\square_{24} \mapsto Q_2(C_1)/Q_1(C_2)\), \(\square_{34} \mapsto Q_{A_2}(C_1)/Q_1(C_2)\), \(\square_{44} \mapsto Q_{A_2}(C_1)\).
\item[\(Q_3\)] \(\square_{53} \mapsto Q_3(C_1)/Q_3(C_2)\).
\item[\(Q_4\)] \(\square_{22} \mapsto Q_4(C_1)/Q_{A_1}(C_2)\), \(\square_{32} \mapsto Q_{A_1}(C_2)\), \(\square_{42} \mapsto Q_4(C_2)\).
\item[\(Q_{A_1}\)] \(\square_{12} \mapsto Q_{A_1}(C_1)\).
\item[\(Q_{A_2}\)] \(\square_{54} \mapsto Q_{A_2}(C_1)/Q_3(C_2)\).
\end{certificateitems}
\end{minipage}\par\smallskip

\par\noindent\begin{minipage}{\linewidth}
\paragraph*{Row 10: \(356124;\ 1245Q,2346Q\Rightarrow 2345Q\).}
\begin{certificateitems}
\item[\(Q_1\)] \(\square_{22} \mapsto Q_{A_1}(C_2)\), \(\square_{23} \mapsto Q_2(C_1)/Q_{A_1}(C_2)\), \(\square_{24} \mapsto Q_2(C_1)/Q_1(C_2)\).
\item[\(Q_2\)] \(\square_{35} \mapsto Q_2(C_2)\).
\item[\(Q_3\)] \(\square_{42} \mapsto Q_3(C_1)\), \(\square_{43} \mapsto Q_{A_2}(C_1)\), \(\square_{44} \mapsto Q_{A_2}(C_1)/Q_3(C_2)\).
\item[\(Q_4\)] \(\square_{31} \mapsto Q_4(C_1)/Q_4(C_2)\).
\item[\(Q_{A_1}\)] \(\square_{21} \mapsto Q_4(C_1)/Q_{A_1}(C_2)\).
\item[\(Q_{A_2}\)] \(\square_{45} \mapsto Q_{A_2}(C_2)\).
\end{certificateitems}
\end{minipage}\par\smallskip

\par\noindent\begin{minipage}{\linewidth}
\paragraph*{Row 11: \(356124;\ 1245Q,2356Q\Rightarrow 2345Q\).}
\begin{certificateitems}
\item[\(Q_1\)] \(\square_{22} \mapsto Q_{A_1}(C_2)\), \(\square_{23} \mapsto Q_2(C_1)/Q_{A_1}(C_2)\), \(\square_{24} \mapsto Q_2(C_1)/Q_1(C_2)\).
\item[\(Q_2\)] \(\square_{35} \mapsto Q_2(C_2)\).
\item[\(Q_3\)] \(\square_{42} \mapsto Q_3(C_1)/Q_4(C_2)\), \(\square_{43} \mapsto Q_{A_2}(C_1)/Q_4(C_2)\), \(\square_{44} \mapsto Q_{A_2}(C_1)\).
\item[\(Q_4\)] \(\square_{31} \mapsto Q_4(C_1)\).
\item[\(Q_{A_1}\)] \(\square_{21} \mapsto Q_4(C_1)\).
\item[\(Q_{A_2}\)] \(\square_{45} \mapsto Q_2(C_2)\).
\end{certificateitems}
\end{minipage}\par\smallskip

\par\noindent\begin{minipage}{\linewidth}
\paragraph*{Row 12: \(456123;\ 1246Q,1345Q\Rightarrow 1245Q\).}
\begin{certificateitems}
\item[\(Q_1\)] \(\square_{12} \mapsto Q_{A_1}(C_1)/Q_1(C_2)\), \(\square_{13} \mapsto Q_1(C_1)/Q_1(C_2)\).
\item[\(Q_2\)] \(\square_{24} \mapsto Q_2(C_1)\), \(\square_{34} \mapsto Q_2(C_1)/Q_2(C_2)\).
\item[\(Q_3\)] \(\square_{42} \mapsto Q_3(C_2)\), \(\square_{43} \mapsto Q_3(C_1)/Q_3(C_2)\).
\item[\(Q_4\)] \(\square_{21} \mapsto Q_4(C_1)/Q_{A_1}(C_2)\), \(\square_{31} \mapsto Q_4(C_1)/Q_4(C_2)\).
\item[\(Q_{A_1}\)] \(\square_{11} \mapsto Q_{A_1}(C_1)/Q_{A_1}(C_2)\).
\item[\(Q_{A_2}\)] \(\square_{44} \mapsto Q_{A_2}(C_1)/Q_{A_2}(C_2)\).
\end{certificateitems}
\end{minipage}\par\smallskip

\par\noindent\begin{minipage}{\linewidth}
\paragraph*{Row 13: \(456123;\ 1246Q,1356Q\Rightarrow 1256Q\).}
\begin{certificateitems}
\item[\(Q_1\)] \(\square_{13} \mapsto Q_1(C_1)/Q_1(C_2)\).
\item[\(Q_2\)] \(\square_{24} \mapsto Q_2(C_1)\), \(\square_{34} \mapsto Q_2(C_1)/Q_2(C_2)\), \(\square_{44} \mapsto Q_{A_2}(C_1)/Q_2(C_2)\).
\item[\(Q_3\)] \(\square_{53} \mapsto Q_3(C_1)/Q_3(C_2)\).
\item[\(Q_4\)] \(\square_{22} \mapsto Q_4(C_1)/Q_{A_1}(C_2)\), \(\square_{32} \mapsto Q_4(C_1)/Q_4(C_2)\), \(\square_{42} \mapsto Q_4(C_2)\).
\item[\(Q_{A_1}\)] \(\square_{12} \mapsto Q_{A_1}(C_1)/Q_{A_1}(C_2)\).
\item[\(Q_{A_2}\)] \(\square_{54} \mapsto Q_{A_2}(C_1)/Q_{A_2}(C_2)\).
\end{certificateitems}
\end{minipage}\par\smallskip

\par\noindent\begin{minipage}{\linewidth}
\paragraph*{Row 14: \(356124;\ 1345Q,2346Q\Rightarrow 2345Q\).}
\begin{certificateitems}
\item[\(Q_1\)] \(\square_{22} \mapsto Q_1(C_1)/Q_{A_1}(C_2)\), \(\square_{23} \mapsto Q_{A_1}(C_2)\), \(\square_{24} \mapsto Q_1(C_2)\).
\item[\(Q_2\)] \(\square_{35} \mapsto Q_2(C_1)/Q_2(C_2)\).
\item[\(Q_3\)] \(\square_{42} \mapsto Q_3(C_1)\), \(\square_{43} \mapsto Q_{A_2}(C_1)\), \(\square_{44} \mapsto Q_{A_2}(C_1)/Q_3(C_2)\).
\item[\(Q_4\)] \(\square_{31} \mapsto Q_4(C_1)/Q_4(C_2)\).
\item[\(Q_{A_1}\)] \(\square_{21} \mapsto Q_{A_1}(C_1)/Q_{A_1}(C_2)\).
\item[\(Q_{A_2}\)] \(\square_{45} \mapsto Q_{A_2}(C_1)/Q_{A_2}(C_2)\).
\end{certificateitems}
\end{minipage}\par\smallskip

\par\noindent\begin{minipage}{\linewidth}
\paragraph*{Row 15: \(456123;\ 1345Q,2346Q\Rightarrow 2345Q\).}
\begin{certificateitems}
\item[\(Q_1\)] \(\square_{22} \mapsto Q_1(C_1)/Q_{A_1}(C_2)\), \(\square_{23} \mapsto Q_1(C_1)/Q_1(C_2)\), \(\square_{24} \mapsto Q_1(C_2)\).
\item[\(Q_2\)] \(\square_{35} \mapsto Q_2(C_1)/Q_2(C_2)\).
\item[\(Q_3\)] \(\square_{42} \mapsto Q_3(C_1)\), \(\square_{43} \mapsto Q_3(C_1)/Q_3(C_2)\), \(\square_{44} \mapsto Q_{A_2}(C_1)/Q_3(C_2)\).
\item[\(Q_4\)] \(\square_{31} \mapsto Q_4(C_1)/Q_4(C_2)\).
\item[\(Q_{A_1}\)] \(\square_{21} \mapsto Q_{A_1}(C_1)/Q_{A_1}(C_2)\).
\item[\(Q_{A_2}\)] \(\square_{45} \mapsto Q_{A_2}(C_1)/Q_{A_2}(C_2)\).
\end{certificateitems}
\end{minipage}\par\smallskip

\par\noindent\begin{minipage}{\linewidth}
\paragraph*{Row 16: \(356124;\ 1345Q,2356Q\Rightarrow 2345Q\).}
\begin{certificateitems}
\item[\(Q_1\)] \(\square_{22} \mapsto Q_1(C_1)/Q_{A_1}(C_2)\), \(\square_{23} \mapsto Q_{A_1}(C_2)\), \(\square_{24} \mapsto Q_1(C_2)\).
\item[\(Q_2\)] \(\square_{35} \mapsto Q_2(C_1)/Q_2(C_2)\).
\item[\(Q_3\)] \(\square_{42} \mapsto Q_3(C_1)/Q_4(C_2)\), \(\square_{43} \mapsto Q_{A_2}(C_1)/Q_4(C_2)\), \(\square_{44} \mapsto Q_{A_2}(C_1)\).
\item[\(Q_4\)] \(\square_{31} \mapsto Q_4(C_1)\).
\item[\(Q_{A_1}\)] \(\square_{21} \mapsto Q_{A_1}(C_1)\).
\item[\(Q_{A_2}\)] \(\square_{45} \mapsto Q_{A_2}(C_1)/Q_2(C_2)\).
\end{certificateitems}
\end{minipage}\par\smallskip

\par\noindent\begin{minipage}{\linewidth}
\paragraph*{Row 17: \(456123;\ 1356Q,2346Q\Rightarrow 2356Q\).}
\begin{certificateitems}
\item[\(Q_1\)] \(\square_{23} \mapsto Q_1(C_1)/Q_1(C_2)\), \(\square_{24} \mapsto Q_1(C_2)\).
\item[\(Q_2\)] \(\square_{35} \mapsto Q_2(C_1)/Q_2(C_2)\), \(\square_{45} \mapsto Q_2(C_1)/Q_{A_2}(C_2)\).
\item[\(Q_3\)] \(\square_{53} \mapsto Q_3(C_1)/Q_3(C_2)\), \(\square_{54} \mapsto Q_{A_2}(C_1)/Q_3(C_2)\).
\item[\(Q_4\)] \(\square_{32} \mapsto Q_4(C_1)/Q_4(C_2)\), \(\square_{42} \mapsto Q_4(C_1)\).
\item[\(Q_{A_1}\)] \(\square_{22} \mapsto Q_{A_1}(C_1)/Q_{A_1}(C_2)\).
\item[\(Q_{A_2}\)] \(\square_{55} \mapsto Q_{A_2}(C_1)/Q_{A_2}(C_2)\).
\end{certificateitems}
\end{minipage}\par\smallskip

\par\noindent\begin{minipage}{\linewidth}
\paragraph*{Row 18: \(345612;\ 1356Q,2456Q\Rightarrow 1256Q\).}
\begin{certificateitems}
\item[\(Q_1\)] \(\square_{12} \mapsto Q_1(C_1)\).
\item[\(Q_2\)] \(\square_{23} \mapsto Q_1(C_2)\), \(\square_{33} \mapsto Q_2(C_1)/Q_1(C_2)\), \(\square_{43} \mapsto Q_2(C_1)\).
\item[\(Q_3\)] \(\square_{52} \mapsto Q_3(C_1)/Q_3(C_2)\).
\item[\(Q_4\)] \(\square_{21} \mapsto Q_{A_1}(C_1)/Q_{A_1}(C_2)\), \(\square_{31} \mapsto Q_4(C_1)/Q_{A_1}(C_2)\), \(\square_{41} \mapsto Q_4(C_1)/Q_4(C_2)\).
\item[\(Q_{A_1}\)] \(\square_{11} \mapsto Q_{A_1}(C_1)\).
\item[\(Q_{A_2}\)] \(\square_{53} \mapsto Q_{A_2}(C_1)/Q_3(C_2)\).
\end{certificateitems}
\end{minipage}\par\smallskip

\par\noindent\begin{minipage}{\linewidth}
\paragraph*{Row 19: \(364512;\ 1456Q,2346P\Rightarrow 3456Q\).}
\begin{certificateitems}
\item[\(Q_1\)] \(\square_{32} \mapsto Q_1(C_1)/P_C(C_2)\), \(\square_{33} \mapsto P_C(C_2)\).
\item[\(Q_2\)] \(\square_{44} \mapsto Q_2(C_1)/P_R(C_2)\).
\item[\(Q_3\)] \(\square_{52} \mapsto Q_3(C_1)/P_R(C_2)\), \(\square_{53} \mapsto Q_{A_2}(C_1)/P_R(C_2)\).
\item[\(Q_4\)] \(\square_{41} \mapsto Q_4(C_1)\).
\item[\(Q_{A_1}\)] \(\square_{31} \mapsto Q_{A_1}(C_1)\).
\item[\(Q_{A_2}\)] \(\square_{54} \mapsto Q_{A_2}(C_1)/P_R(C_2)\).
\end{certificateitems}
\end{minipage}\par\smallskip

\par\noindent\begin{minipage}{\linewidth}
\paragraph*{Row 20: \(345612;\ 1456Q,2356Q\Rightarrow 1256Q\).}
\begin{certificateitems}
\item[\(Q_1\)] \(\square_{12} \mapsto Q_1(C_1)\).
\item[\(Q_2\)] \(\square_{23} \mapsto Q_1(C_2)\), \(\square_{33} \mapsto\mathsf{U}\), \(\square_{43} \mapsto Q_2(C_1)\).
\item[\(Q_3\)] \(\square_{52} \mapsto Q_3(C_1)/Q_3(C_2)\).
\item[\(Q_4\)] \(\square_{21} \mapsto Q_{A_1}(C_1)/Q_{A_1}(C_2)\), \(\square_{31} \mapsto Q_{A_1}(C_1)/Q_4(C_2)\), \(\square_{41} \mapsto Q_4(C_1)/Q_4(C_2)\).
\item[\(Q_{A_1}\)] \(\square_{11} \mapsto Q_{A_1}(C_1)\).
\item[\(Q_{A_2}\)] \(\square_{53} \mapsto Q_{A_2}(C_1)/Q_3(C_2)\).
\end{certificateitems}
\end{minipage}\par\smallskip
The unresolved cell is repaired by the rightmost construction in
Lemma~\ref{lem:repair-row-20}.

\endgroup

\section{Ancillary program and recorded output}
\label{app:contact-code}

The accompanying files \texttt{signature\_verify.py} and
\texttt{signature\_verify\_output.txt} contain the finite verifier and
its recorded output. They are supplied in the \texttt{anc/} directory
with the arXiv source submission.
The program uses Python~3 and the standard-library module
\texttt{itertools}; no third-party packages are required. From that
directory, run
\begin{quote}\ttfamily\small
python3 signature\_verify.py > reproduced\_output.txt
\end{quote}
and compare the result with \texttt{signature\_verify\_output.txt}.
Positions are zero-based internally and one-based in the printed core
labels. In the output, \texttt{U}, \texttt{H}, and
\texttt{H-inherited} denote \(U(C)\), \(H_{\mathrm{res}}(C)\), and
the inclusion \(H_{\mathrm{res}}(C)\subseteq H(C_1)\cap H(C_2)\).
A printed cell pair \texttt{(h,v)} represents the elementary cell
\(\square_{hv}\) of \eqref{eq:elementary-cell}.

The program starts from the core definitions, pattern-avoidance
predicates, and deterministic cross-candidate rule; no surviving row
is hard-coded. It also checks explicitly that no selected cross
candidate has a noncore union point in its middle region, as required
by Theorem~\ref{thm:complete-cross-table}.
The recorded output reports eight one-point signatures
before the lci test and none after it, together with twenty two-point
lci signatures and their inheritance data. Its role in the proof is
justified by Theorem~\ref{thm:finite-signature-reduction} and
Proposition~\ref{prop:finite-decision}, which establish the finite
reduction and completeness for arbitrary ambient rank.

\bibliographystyle{amsplain}
\bibliography{references}

@book{BL00,
  author    = {Billey, Sara and Lakshmibai, V.},
  title     = {Singular Loci of {Schubert} Varieties},
  series    = {Progress in Mathematics},
  volume    = {182},
  publisher = {Birkh{\"a}user Boston},
  address   = {Boston, MA},
  edition   = {1st},
  year      = {2000},
  doi       = {10.1007/978-1-4612-1324-6}
}

@book{BB05,
  author    = {Bj{\"o}rner, Anders and Brenti, Francesco},
  title     = {Combinatorics of {Coxeter} Groups},
  series    = {Graduate Texts in Mathematics},
  volume    = {231},
  publisher = {Springer},
  address   = {Berlin and Heidelberg},
  edition   = {1st},
  year      = {2005},
  doi       = {10.1007/3-540-27596-7}
}

@article{LS90,
  author  = {Lakshmibai, V. and Sandhya, B.},
  title   = {Criterion for Smoothness of {Schubert} Varieties in {$SL(n)/B$}},
  journal = {Proceedings of the Indian Academy of Sciences. Mathematical Sciences},
  volume  = {100},
  number  = {1},
  pages   = {45--52},
  year    = {1990},
  doi     = {10.1007/BF02881113}
}

@article{WY08,
  author  = {Woo, Alexander and Yong, Alexander},
  title   = {Governing Singularities of {Schubert} Varieties},
  journal = {Journal of Algebra},
  volume  = {320},
  number  = {2},
  pages   = {495--520},
  year    = {2008},
  doi     = {10.1016/j.jalgebra.2007.12.016}
}

@article{BW03,
  author  = {Billey, Sara C. and Warrington, Gregory S.},
  title   = {Maximal Singular Loci of {Schubert} Varieties in {$SL(n)/B$}},
  journal = {Transactions of the American Mathematical Society},
  volume  = {355},
  number  = {10},
  pages   = {3915--3945},
  year    = {2003},
  doi     = {10.1090/S0002-9947-03-03019-8}
}

@article{UW13,
  author  = {{\'U}lfarsson, Henning and Woo, Alexander},
  title   = {Which {Schubert} Varieties Are Local Complete Intersections?},
  journal = {Proceedings of the London Mathematical Society},
  volume  = {107},
  number  = {5},
  pages   = {1004--1052},
  year    = {2013},
  doi     = {10.1112/plms/pdt004}
}

@article{Woo09,
  author  = {Woo, Alexander},
  title   = {Permutations with {Kazhdan--Lusztig} Polynomial
             {$P_{id,w}(q)=1+q^h$}},
  journal = {The Electronic Journal of Combinatorics},
  volume  = {16},
  number  = {2},
  eid     = {R10},
  pages   = {R10},
  year    = {2009},
  note    = {With an appendix by Sara Billey and Jonathan Weed},
  doi     = {10.37236/76}
}

@article{CK03,
  author  = {Carrell, James B. and Kuttler, Jochen},
  title   = {Smooth Points of {$T$}-Stable Varieties in {$G/B$}
             and the {Peterson} Map},
  journal = {Inventiones Mathematicae},
  volume  = {151},
  number  = {2},
  pages   = {353--379},
  year    = {2003},
  doi     = {10.1007/s00222-002-0256-5}
}

@article{Hepler19,
  author  = {Hepler, Brian},
  title   = {Rational Homology Manifolds and Hypersurface Normalizations},
  journal = {Proceedings of the American Mathematical Society},
  volume  = {147},
  number  = {4},
  pages   = {1605--1613},
  year    = {2019},
  doi     = {10.1090/proc/14391}
}

@article{BBD82,
  author  = {Beilinson, A. A. and Bernstein, J. and Deligne, P.},
  title   = {Faisceaux pervers},
  journal = {Ast{\'e}risque},
  volume  = {100},
  pages   = {5--171},
  year    = {1982}
}

@article{KL79,
  author  = {Kazhdan, David and Lusztig, George},
  title   = {Representations of {Coxeter} Groups and {Hecke} Algebras},
  journal = {Inventiones Mathematicae},
  volume  = {53},
  number  = {2},
  pages   = {165--184},
  year    = {1979},
  doi     = {10.1007/BF01390031}
}

@book{Ach21,
  author    = {Achar, Pramod N.},
  title     = {Perverse Sheaves and Applications to Representation Theory},
  series    = {Mathematical Surveys and Monographs},
  volume    = {258},
  publisher = {American Mathematical Society},
  address   = {Providence, RI},
  year      = {2021},
  doi       = {10.1090/surv/258}
}
\end{document}